\documentclass[UTF8,reqno,11pt]{amsart}

\usepackage[top=2.0cm,bottom=2.0cm,left=2cm,right=2cm]{geometry}
\usepackage{amsthm,amsmath,amssymb,dsfont}
\usepackage{mathrsfs,amsfonts,functan,dsfont,bbm,extarrows,mathtools}
\usepackage[colorlinks]{hyperref}
\usepackage{verbatim}%%\begin{comment} ... \end{comment}注释一段话

\usepackage{marginnote}
\usepackage{xcolor,soul}

\usepackage{stmaryrd}
\usepackage{esint}
\usepackage{graphicx}
\usepackage{bbm}
\usepackage{tikz}
\usetikzlibrary{positioning}

%\usepackage{showkeys}
%%------------Settings-------------------------
\setlength{\parindent}{2em}
\pretolerance=-1 \tolerance=10000
\emergencystretch=100em \hbadness=10000
\hfuzz=1pt
%%---------------------------------------------

%%------- Theorem environments --------------------

\newtheorem{proposition}{Proposition}[section]
\newtheorem{lemma}{Lemma}[section]
\newtheorem{theorem}{Theorem}[section]

\newtheorem{remark}{Remark}[section]
\numberwithin{equation}{section}
\allowdisplaybreaks
\arraycolsep=1.5pt
%%-------------------------------------------------

%%---------------User-Defined----------------

\def\eps{\varepsilon}
\def\i{\mathrm{in}}
\def\d{\mathrm{d}}
\def\R{\mathbb{R}}
\renewcommand{\S}{\mathbb{S}}
\def\f{\mathfrak{f}}
\def\u{\mathfrak{u}}

\def\A{\mathcal{A}}
\def\B{\mathcal{B}}
\def\C{\mathcal{C}}
\def\E{\mathcal{E}}
\def\e{\mathrm{e}}
\def\H{\mathcal{H}}
\def\p{/\!\!/}

\def\L{\mathcal{L}}
\def\N{\mathcal{N}}
\def\K{\mathcal{K}}
\def\Q{\mathcal{Q}}
\def\T{\mathcal{T}}
\def\P{\mathcal{P}}
\def\h{\{\mathcal{I}-\mathcal{P}\}}

\newcommand{\dis}{\displaystyle}

%%------- For Revised notes -----------------------------------------------

\newcounter{wronumber}\setcounter{wronumber}{1}

%%-------------------------------------------------------------------------
%%------- Short Norms -----------------------------------------------------

% \newcommand{\agll}[1]{\agl{ \!\! \agl{#1} \!\! }}
% \newcommand{\agll}[1]{\left\langle\!\left\langle#1\right\rangle\!\right\rangle}

%%==============================================================================

\begin{document}
%%------------------------Front matter----------------------------------------------------------------------
\title[The Compressible Euler and Acoustic Limit]{The Compressible Euler and Acoustic Limits from quantum Boltzmann Equation with Fermi-Dirac Statistics}

\author[Ning Jiang]{Ning Jiang}
\address[Ning Jiang]{\newline School of Mathematics and Statistics, Wuhan University, Wuhan, 430072, P. R. China}
\email{njiang@whu.edu.cn}

\author[Kai Zhou]{Kai Zhou}
\address[Kai Zhou]
{\newline School of Mathematics and Statistics, Wuhan University, Wuhan, 430072, P. R. China}
\email{kaizhoucm@whu.edu.cn}

\date{\today}
\maketitle

\begin{abstract}
   This paper justifies the compressible Euler and acoustic limits from quantum Boltzmann equation with Fermi-Dirac statistics (briefly, BFD) rigorously. This limit was formally derived in Zakrevskiy's thesis \cite{Zakrevskiy} by moment method. We employ the Hilbert approach. The forms of the classical compressible Euler system and the one derived from BFD are different. Our proof is based on the analysis of the nonlinear implicit transformation of these two forms, and a few novel nonlinear estimates. \\

   \noindent\textsc{Keywords.} Quantum Boltzmann equation; Fermi-Dirac statistics; Hilbert expansion; Compressible Euler limit; Acoustic limit.\\

   %\noindent\textsc{AMS subject classifications.}  35B25; 35F20; 35Q20; 76N15; 82C40
\end{abstract}
\tableofcontents
%\vspace*{10pt}
%\phantomsection
%\addcontentsline{toc}{section}{\contentsname}

%%%%%%%%%%%%%%%%%%%%%%%%%%%%%%%%%%%%%%（正文）%%%%%%%%%%%%%%%%%%%%%%%%%%%%%
\section{Introduction}
\subsection{The BFD equation} In this paper, we investigate the motion of  the quantum particles following the Fermi-Dirac statistics. Namely, the evolution of rarefied gas with quantum effect described by the Boltzmann-Fermi-Dirac (briefly BFD) equation:
\begin{equation}\label{BFD}
  \partial_t F + v\cdot \nabla_x F =\C(F),
\end{equation}
 is studied. Here $0\leqslant F(t,x,v) \leqslant 1$ is the number density of particles located at position $x\in \R^3$, with velocity $v\in \R^3$,  at time $t\geqslant 0$. The collision integral $\C(F)$ takes the form
\begin{equation}\label{collision-op}
  \C(F)=\iint\limits_{\R^3\times\S^2}b(v_*-v,\omega) \Big[\,F^\prime F_*^\prime (1-F)(1-F_*)-FF_*(1-F^\prime)(1- F_*^\prime)\,\Big]\d\omega\d v_*.
\end{equation}
Here $F$, $F_*$, $F'$ and $F'_*$ appearing in the integrand are understood to mean $F(t,x,\cdot)$ evaluated at the velocities $v$, $v_*$, $v'$ and $v'_*$ respectively. We will always use this kind of shorthands throughout the paper. The primed velocities $v'$ and $v'_*$ are velocities after collision of any pair of colliding molecules with velocities $v$ and $v_*$ before. In the process of collision, the system of equations
\begin{equation}\label{Vel-cons}
  \left\{
  \begin{aligned}
  v+v_*=&v'+v'_*,\\
  |v|^2+|v_*|^2=&|v'|^2+|v'_*|^2,
  \end{aligned}
  \right.
\end{equation}
 is that of the only natural conservation laws, namely, the conservation laws of momentum and kinetic energy. The solutions to \eqref{Vel-cons} are represented by
\begin{equation}\label{v-after}
  v^\prime=v+[(v_*-v)\cdot \omega]\omega,\quad v_*^\prime=v_*-[(v_*-v)\cdot \omega]\omega,\quad \omega\in \S^2.
\end{equation}
In \eqref{collision-op}, the collision kernel $b(v_*-v,\omega)$ takes the classical factored form
\begin{equation*}
  b(v_*-v,\omega)=|v_*-v|^\gamma b_0\left(\left|\omega\cdot\frac{v_*-v}{|v_*-v|}\right|\right),\;-3< \gamma \leqslant 1,
\end{equation*}
when the molecular interaction is given by a long-range potential. Here we call it the hard potential kernel for $0\leqslant \gamma \leqslant 1$, while soft potential kernel for $-3<\gamma < 0$. However, throughout the entire paper, we consider the hard sphere collision model, i.e.,
\begin{equation*}
  b(v_*-v,\omega)=|(v_*-v)\cdot \omega|.
\end{equation*}

The BFD equation is a modification of the classical Boltzmann equation, when we account for the quantum effects of molecular encounters. In some other literatures, BFD equation is also frequently called Uehling-Uhlenbeck equation or Nordheim equation. At variance with the derivation of the classical Boltzmann equation \cite{GST-2014,Lanford-1975}, the rigorous derivation of the BFD equation is yet to be established. Since it was presented in Nordheim \cite{Nordheim}, Uehling and Uhlenbeck \cite{UU-1933} by heuristic arguments, works on the derivation of the BFD equation have been taking lots of time.

Mathematically, for the well-posedness of BFD equation, early results were obtained by Dolbeault \cite{Dolbeault} and Lions \cite{Lions}. They studied the global weak existence of solutions in mild or distributional sense for the whole space $\R^3$ under some assumptions on the collision kernel. Lu systematically studied the global existence and stability of weak solutions for general initial data in \cite{Lu-2001JSP,Lu-2008JDE,Lu-Wennberg-2003ARMA}. In particular, he considered the collision kernels of very soft potential with a weak angular cutoff. On the other hand, in our paper \cite{Jiang-Xiong-Zhou-2021}, we prove the global-in-time existence of classical solution for hard sphere potential with an assumption on the smallness of the initial data. 

In the other direction, He-Lu-Pulvirenti \cite{He-Lu-Pulvirenti-2021CMP} mathematically justified the semi-classical limit from homogeneous quantum Boltzmann equation to the homogeneous Fokker-Planck-Landau equation as the Planck constant $\hbar$ tends to zero.

\subsection{The hydrodynamic limits for kinetic equations}
In recent decades, there has a great interest on the connection between kinetic equations and their fluid limits in different scalings. 
In the late 1980s, the so-called BGL program was initialized by Bardos-Golse-Levermore \cite{BGL-1991JSP,BGL-1993CPAM}, which aimed at obtaining rigorously the Leray's solutions to incompressible Navier-Stokes equations from the Boltzmann equation with large initial data in the framework of DiPerna-Lions renormalized solutions  \cite{DiPerna-Lions-1989Annals}. This program was finally completed by Golse and Saint-Raymond \cite{GSR-2004Invention,GSR-2009JMPA}. For incompressible Euler limits, Saint-Raymond made the main contributions in \cite{Saint-Raymond-2003ARMA,Saint-Raymond-2009HPoincare}, which connected the DiPern-Lions solutions to the dissipative solutions. As for the hyperbolic scaling, say, acoustic limit,  Golse-Levermore \cite{Golse-Levermore-2002CPAM} established the acoustic limit with an assumption on the size of the fluctuations. Later on, the restriction has been relaxed to the borderline in \cite{JLM-2010CPDE}.

In the classical solutions framework, there are mainly two approaches: one is based on the spectral analysis of semigroup and another is the nonlinear energy method. For the former approach, Bardos and Ukai \cite{Bardos-Ukai-1991MMMAS} took the first step to obtain the uniform in $\eps\in (0,1)$ (the Knusden number) global existence of classical solution to scaled Boltzmann equation for hard cutoff potential. As a result, they also established the incompressible Navier-Stokes limit from Boltzmann equation. More contributions refer to the literatures \cite{Briant-2015JDE,BMAM-2019AA,Gallagher-Tristani-2020,Nishida-1978CMP}. For the nonlinear energy method developed by Guo \cite{Guo-2003,Guo-2004}, we focus on its application to the compressible Euler limits although there are many results (c.f. \cite{Guo-2006CPAM,JK-2020,Jiang-Xu-Zhao-2018}) of incompressible fluid limits. As an improvement of Caflisch's work \cite{Caflisch-1980CPAM} where the truncated Hilbert expansion with a remainder was used, Guo-Jang-Jiang \cite{GJJ-2010CPAM} made the first step to prove the compressible Euler limit and acoustic limit in optimal scaling from Boltzmann equation by employing the nonlinear energy method. This method relies on our knowledge of the local well-posedness of compressible Euler equation in advance.

Although there are many results of hydrodynamic limits from the classical Boltzmann equation, the research on fluid limits from quantum Boltzmann equation is much limited. The compressible Euler and Navier-Stokes limits and incompressible Navier-Stokes limits from BFD equation for both hard potential and soft potential are formally derived by Zakrevskiy \cite{Zakrevskiy}. We also mention that, Filbet, Hu and Jin \cite{FHJ-2012} introduced a new scheme for quantum Boltzmann equation to capture the Euler limit by numerical computations. However, due to the absence of rigorous mathematical theory on fluid limits from quantum Boltzmann equation, an attempt on rigorous proof of incompressible Navier-Stokes-Fourier limit from BFD equation is made by Jiang-Xiong-Zhou \cite{Jiang-Xiong-Zhou-2021} recently. In \cite{Jiang-Xiong-Zhou-2021} , the authors prove the global existence of classical solutions near equilibrium and in addition, they also obtain the uniform in $\eps$ energy estimates, by which and the structure of equations of conservation laws they establish the incompressible Navier-Stokes-Fourier limit from BFD equation.

\subsection{Formal Analysis of Fluid Limits from BFD Equation}\label{subsec-FormalAnalysis}
Our interests are the compressible Euler limit and the acoustic limit from the BFD equation. Therefore, the dimensionless number $\eps>0$ called the Knudsen number is introduced to rescale the BFD equation \eqref{BFD} as:
\begin{equation}\label{SBFD}
  \partial_t F_\eps +v\cdot \nabla_x F_\eps =\frac{1}{\eps}\C(F_\eps),
\end{equation}
on which the initial data is further imposed:
\begin{equation}\label{In-Data-SBFD}
  F_\eps(0,x,v)=F_\eps^\i(x,v).
\end{equation}

Next we  exhibit some basic properties of BFD equation. In the case of classical Boltzmann equation \cite{Cercignani-1988}, the conservation laws and Boltzmann's $H$ Theorem are statements that bear exclusively on the collision integral. As a analogy, the collision operator $\C$ satisfies the conservation laws.
\begin{proposition}[Proposition 2, Chapter I in \cite{Zakrevskiy}]\label{Prop-C-conser-law}
  For any measurable function
  \begin{equation}\label{F}
    F(t,x,v):\,\R_+\times \R^3 \times \R^3 \longrightarrow \R,\quad 0 \leqslant F \leqslant 1\; a.e.
  \end{equation}
  rapidly decaying on the infinity, there hold
  \begin{equation*}
    \int_{\R^3} \C(F) \d v=0,\quad \int_{\R^3} v\,\C(F) \d v=0,\quad \int_{\R^3} |v|^2\C(F) \d v=0.
  \end{equation*}
\end{proposition}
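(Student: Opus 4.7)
The plan is to use the standard symmetrization argument based on the invariance of the collision integral under two changes of variables, adapted to the quantum kernel. Although the quadratic nonlinearity in the classical case is replaced by the quartic Fermi--Dirac expression
\begin{equation*}
  Q(F) := F'F_*'(1-F)(1-F_*) - FF_*(1-F')(1-F_*'),
\end{equation*}
the two key symmetries remain intact, and that is all one needs in order to run the usual ``collision invariant'' argument for $\varphi(v)\in\{1,v,|v|^2\}$.

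The first step is to write, for any rapidly decaying test function $\varphi$,
\begin{equation*}
  \int_{\R^3}\varphi(v)\,\C(F)\d v = \iiint_{\R^3\times\R^3\times\S^2} b(v_*-v,\omega)\,\varphi(v)\,Q(F)\,\d\omega\d v_*\d v,
\end{equation*}
and then exploit the involution $(v,v_*)\mapsto(v_*,v)$. A direct computation from \eqref{v-after} shows that this swap sends $(v',v_*')$ to $(v_*',v')$, leaves $b$ unchanged (since $b=|(v_*-v)\cdot\omega|$), and leaves $Q(F)$ invariant (both quartic products are symmetric in the two incoming and the two outgoing velocities). Averaging the original integral with the swapped one gives
\begin{equation*}
  \int_{\R^3}\varphi(v)\,\C(F)\d v = \tfrac{1}{2}\iiint b\,[\varphi(v)+\varphi(v_*)]\,Q(F)\d\omega\d v_*\d v.
\end{equation*}

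The second step is to apply the involution $(v,v_*)\mapsto(v',v_*')$ with $\omega$ held fixed. This map has unit Jacobian and preserves $|(v_*-v)\cdot\omega|$, hence $b$; crucially it \emph{interchanges the gain and loss terms}, so $Q(F)\mapsto -Q(F)$. Applied to the previous identity one obtains the fully symmetric form
\begin{equation*}
  \int_{\R^3}\varphi(v)\,\C(F)\d v = \tfrac{1}{4}\iiint b\,\big[\varphi(v)+\varphi(v_*)-\varphi(v')-\varphi(v_*')\big]\,Q(F)\d\omega\d v_*\d v.
\end{equation*}
Taking $\varphi(v)=1$, $\varphi(v)=v$ and $\varphi(v)=|v|^2$ in turn, the bracket vanishes pointwise by the two conservation laws in \eqref{Vel-cons}, and the three identities of the proposition follow.

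The only real obstacle is justifying the formal manipulations --- the use of Fubini to interchange the $v$, $v_*$, $\omega$ integrations and the absolute convergence needed to split the gain/loss contributions before recombining them. Under the standing hypothesis that $F$ is rapidly decaying at infinity and satisfies $0\leqslant F\leqslant 1$ a.e., one has $|Q(F)|\leqslant F'F_*'+FF_*$, and with the hard sphere kernel $b(v_*-v,\omega)=|(v_*-v)\cdot\omega|$ the collisional weight is bounded by $|v|+|v_*|$, so a crude polynomial bound on $F$ together with $\omega$-integration over $\S^2$ makes everything absolutely integrable. This verification is standard and the rest of the argument is a symmetry computation.
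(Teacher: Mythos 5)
Your symmetrization argument is correct and is the standard way to prove this: the two involutions $(v,v_*)\mapsto(v_*,v)$ and $(v,v_*)\mapsto(v',v_*')$ leave the hard-sphere kernel invariant, the first fixes the Fermi--Dirac integrand $Q(F)$ while the second flips its sign, and the resulting symmetric form vanishes for $\varphi\in\{1,v,|v|^2\}$ by the microscopic conservation laws \eqref{Vel-cons}. The paper does not give its own proof --- it simply cites Proposition~2, Chapter~I of Zakrevskiy --- so there is no alternative argument in the paper to compare against; your proof is the canonical one, and the integrability justification (using $0\le F\le 1$, rapid decay, and the relation $|v'|^2+|v_*'|^2=|v|^2+|v_*|^2$ so that $F'F_*'$ also decays rapidly) is adequate.
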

On the other hand, $H$ Theorem is also hold for the BFD equation (c.f. Proposition 4, Chapter I in \cite{Zakrevskiy}).
\begin{proposition}[$H$-Theorem]
  For every measurable rapidly decaying $F$ with at most polynomially increasing $\left|\ln\frac{1-F}{F}\right|$ satisfying \eqref{F}, the following properties are equivalent:
  \begin{itemize}
    \item[(1)] $\C(F)=0$;
    \item[(2)] The entropy production rate is zero,
  \begin{equation*}
    \int_{\R^3} \C(F)\ln\frac{1-F}{F}\d v=0;
  \end{equation*}
    \item[(3)] $F$ is a (local) Fermi-Dirac distribution,
  \begin{equation}\label{Local-FD}
    F_{\f,\u,T}\equiv\mu(t,x,v)=\frac{1}{1+e^{\frac{|v-\u|^2}{2T}-\f}},
  \end{equation}
  \end{itemize}
\end{proposition}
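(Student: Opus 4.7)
The plan is to prove the chain $(1)\Rightarrow(2)\Rightarrow(3)\Rightarrow(1)$ via the classical symmetrization argument adapted to the Fermi--Dirac nonlinearity. The cornerstone is the weak-form identity
\[
\int_{\R^3} \C(F)\,\psi(v)\,\d v \,=\, \frac{1}{4}\iiint b(v_*-v,\omega)\,(A - B)\,(\psi + \psi_* - \psi' - \psi'_*)\,\d\omega\,\d v_*\,\d v,
\]
valid for any admissible test function $\psi$, where $A := F'F'_*(1-F)(1-F_*)$ and $B := FF_*(1-F')(1-F'_*)$. This identity is obtained by combining the three standard symmetries of the hard-sphere collision: the swap $v\leftrightarrow v_*$, the pre--post collisional involution $(v,v_*)\leftrightarrow(v',v'_*)$, and the symmetry of $b$, together with the parametrization \eqref{v-after}. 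Choosing $\psi(v)=\ln\frac{1-F}{F}$ (meaningful thanks to the polynomial growth hypothesis on $|\ln\frac{1-F}{F}|$ together with $0\leqslant F\leqslant 1$), a direct computation gives $\psi+\psi_*-\psi'-\psi'_* = \ln(A/B)$, so that
\[
\int_{\R^3} \C(F)\ln\frac{1-F}{F}\,\d v \,=\, \frac{1}{4}\iiint b(v_*-v,\omega)\,(A-B)\ln\frac{A}{B}\,\d\omega\,\d v_*\,\d v \,\geqslant\, 0,
\]
with equality if and only if $A=B$ almost everywhere, since $(x-y)\ln(x/y)\geqslant 0$ (with equality iff $x=y$) and $b>0$ on the relevant collision set.

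The implication $(1)\Rightarrow(2)$ is immediate. For $(2)\Rightarrow(3)$, vanishing of the entropy production forces $A=B$ a.e., so after taking logarithms the function $\tilde\psi(v):=\ln\frac{F}{1-F}$ satisfies the collision invariance relation $\tilde\psi+\tilde\psi_*=\tilde\psi'+\tilde\psi'_*$ a.e.\ on the set determined by \eqref{Vel-cons}. Invoking the classical characterization of measurable collision invariants for the hard-sphere kernel (cf.\ \cite{Cercignani-1988}), $\tilde\psi$ must be an affine combination of the elementary invariants $1$, $v$, $|v|^2$, say $\tilde\psi(v)=\alpha+\beta\cdot v-\gamma|v|^2$ with $\gamma>0$ (positivity of $\gamma$ being required by the assumed rapid decay of $F$ at infinity). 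Completing the square and inverting the logistic relation $F=(1+e^{-\tilde\psi})^{-1}$ produces exactly the local Fermi--Dirac form \eqref{Local-FD} with $T=1/(2\gamma)$, $\u=\beta/(2\gamma)$ and $\f=\alpha+|\u|^2/(2T)$. For the converse $(3)\Rightarrow(1)$, observing that $\ln\frac{\mu}{1-\mu}=\f-\frac{|v-\u|^2}{2T}$ is a linear combination of $1,\,v,\,|v|^2$, the conservation laws \eqref{Vel-cons} imply $A=B$ pointwise, the integrand of $\C(\mu)$ vanishes identically, and hence $\C(\mu)\equiv 0$.

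The main obstacle is the step $(2)\Rightarrow(3)$, specifically the rigorous identification of all measurable collision invariants when only polynomial growth of $\tilde\psi$ is assumed rather than continuity or smoothness. The standard remedy is a mollification/density argument reducing to the continuous setting, where the classical Boltzmann collision-invariant theorem applies; care is then needed to transfer the polynomial-growth control through the regularization so that the leading coefficient $-\gamma$ of $|v|^2$ is extracted as strictly negative, guaranteeing a normalizable Fermi--Dirac profile. The remaining parts of the proof are routine applications of the collision symmetries and the strict convexity of $x\mapsto x\ln x$.
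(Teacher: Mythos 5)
The paper does not prove this proposition; it is quoted as a known result (Proposition~4, Chapter~I of Zakrevskiy's thesis \cite{Zakrevskiy}), so there is no in-paper argument to compare against. Your sketch is correct and is exactly the standard route: the weak form $\int\C(F)\psi\,\d v=\tfrac14\iiint b\,(A-B)(\psi+\psi_*-\psi'-\psi'_*)$ via the two collision symmetries, the choice $\psi=\ln\tfrac{1-F}{F}$ giving $\psi+\psi_*-\psi'-\psi'_*=\ln(A/B)$, nonnegativity of $(A-B)\ln(A/B)$ with equality iff $A=B$, and then the classification of measurable collision invariants to obtain $\ln\tfrac{F}{1-F}\in\mathrm{Span}\{1,v,|v|^2\}$, with the sign of the $|v|^2$ coefficient pinned down by rapid decay; you correctly flag the collision-invariant classification for merely measurable $\tilde\psi$ as the only nontrivial ingredient.
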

Here in (3) above, $T=T(t,x)>0$ is the temperature, $\u=\u(t,x)\in \R^3$ is the bulk velocity, and $\f(t,x)/T(t,x)$ is the total chemical potential. When the parameters $(\f,\u,T)$ are constants, the corresponding distributions are called {\it global} Fermi-Dirac distributions.

\subsubsection{Hilbert Expansion}
By utilizing the above conservation laws and $H$-Theorem for BFD equation, Zakrevskiy \cite{Zakrevskiy} formally shows that the solutions to the scaled BFD equation \eqref{SBFD} converge to the local Fermi-Dirac distribution $\mu$ (c.f. Theorem I.5 in \cite{Zakrevskiy}). As a consequence, the following Euler system for compressible fluid
\begin{equation}\label{Com-Euler}
\left\{
  \begin{aligned}
  &\partial_t \rho+\nabla_x\cdot (\rho \u)=0,\\
  &\partial_t(\rho \u)+\nabla_x\cdot(\rho \u\otimes \u)+\nabla_x\mathcal{E}=0,\\
  &\partial_t \left(\frac{1}{2}\rho|\u|^2+\frac{3}{2}\E\right)+\nabla_x\cdot\left[\,\left(\frac{1}{2}\rho |\u|^2+\frac{5}{2}\E\right)\u\,\right]=0,
  \end{aligned}
\right.
\end{equation}
is satisfied by the fluid variables $(\rho,\u,\E)$ which are given by
\begin{equation}\label{rho-u-E}
  \begin{aligned}
  \rho&=\int_{\R^3}\mu\d v=T^{3/2}\int_{\R^3}\frac{1}{1+e^{\frac{|v|^2}{2}-\f}}\d v,\\
  \rho \u&=\int_{\R^3}v\mu\d v=T^{3/2}\u\int_{\R^3}\frac{1}{1+e^{\frac{|v|^2}{2}-\f}}\d v,\\
  \E&=\frac{1}{3}\int_{\R^3}|v-\u|^2\mu\d v=\frac{1}{3}T^{5/2}\int_{\R^3}\frac{|v|^2}{1+e^{\frac{|v|^2}{2}-\f}}\d v.
  \end{aligned}
\end{equation}

Although by using the so-called moment method, the literature \cite{Zakrevskiy} formally explained how to acquire the compressible Euler equations \eqref{Com-Euler} from scaled BFD equation \eqref{SBFD}, such computations are not rigorous proofs. However, the process obtaining the Euler equations would play a role of valuable intuition.

Now we will apply the Hilbert expansion (c.f. \cite{Cercignani-1988}) to the scaled BFD equation \eqref{SBFD}. In the process, the arguments are largely formal but rational, hence highly instructive. To this end, the bilinear operator
\begin{equation}\label{Q-op}
  \Q(F,G)=\iint_{\R^3\times\S^2}|(v_*-v)\cdot\omega|\left(F_*^\prime G^\prime-F_* G\right)\d \omega\d v_*,
\end{equation}
and the trilinear operator
\begin{equation}\label{T-op}
    \T(F,G,H)=\iint_{\R^3\times\S^2}|(v_*-v)\cdot\omega|\Big[F_*^\prime G^\prime\left(H_*+H\right)-F_* G\left(H_*^\prime+H^\prime\right)\Big]\d \omega\d v_*,
\end{equation}
are designated respectively for the sake of simplicity. In terms of the local Fermi-Dirac distribution $\mu$, we additionally denote the local linearized collision operator $\L$ by
\begin{equation}\label{L-op}
  \begin{aligned}
  \L g= & \frac{1}{\sqrt{\mu(1-\mu)}}\iint_{\R^3\times\S^2}|(v_*-v)\cdot\omega|\N\Bigg\{\left(\frac{g}{\sqrt{\mu(1-\mu)}}\right) \\
  &\qquad\qquad+\left(\frac{g}{\sqrt{\mu(1-\mu)}}\right)_*-\left(\frac{g}{\sqrt{\mu(1-\mu)}}\right)_*^\prime-\left(\frac{g}{\sqrt{\mu(1-\mu)}}\right)^\prime\Bigg\}\d \omega\d v_*.
  \end{aligned}
\end{equation}
Here, the conservation laws of momentum and kinetic energy \eqref{Vel-cons} allow us to denote $\N$ appearing above by
\begin{equation}\label{N}
  \N=\mu_*^\prime \mu^\prime(1-\mu_*)(1-\mu)=\mu_*\mu(1-\mu_*^\prime)(1-\mu^\prime).
\end{equation}
By inspection, the operator $\C$ and $\L$ can be equivalently formulated with respect to $\Q$ and $\T$ as
\begin{equation}\label{CL-relations-QT}
  \begin{aligned}
  \C(F)=&\Q(F,F)-\T(F,F,F),\\
  \L \left(\frac{F}{\sqrt{\mu(1-\mu)}}\right)=&-\frac{1}{\sqrt{\mu(1-\mu)}}[\,\Q(\mu,F)+\Q(F,\mu)-\T(F,\mu,\mu)-\T(\mu,F,\mu)-\T(\mu,\mu,F)\,].
  \end{aligned}
\end{equation}

In general, Hilbert expansion means to seek the solutions as the power series of $\eps$:
\begin{equation*}
  F_\eps(t,x,v) \simeq \sum_{n\geqslant 0} \eps^n F_n(t,x,v),
\end{equation*}
Plugging the form above into the scaled BFD equation \eqref{SBFD} and collecting the coefficients of the same orders of $\eps$ on both sides, one can obtain
\begin{equation}\label{Coeff-SameOrder}
  \begin{array}{rrl}
  O(\frac{1}{\eps}):&0=&\Q(F_0,F_0)-\T(F_0,F_0,F_0),\\
  &\phantom{he}\\
  O(1):&\qquad (\partial_t+v\cdot\nabla_x)F_0=&\Q(F_0,F_1)+\Q(F_1,F_0)\\
  &&\quad-\T(F_0,F_1,F_0)-\T(F_1,F_0,F_0)-\T(F_0,F_0,F_1),\\
  &\phantom{he}\\
  O(\eps):& (\partial_t+v\cdot\nabla_x)F_1=&\underline{\Q(F_0,F_2)+\Q(F_2,F_0)}+\Q(F_1,F_1)\\
  &&\quad\underline{-\T(F_0,F_2,F_0)-\T(F_2,F_0,F_0)}-\T(F_1,F_1,F_0)\\
  &&\qquad\quad-\T(F_0,F_1,F_1)-\T(F_1,F_0,F_1)-\underline{\T(F_0,F_0,F_2)},\\
  &\phantom{he}\\
  O(\eps^2):& (\partial_t+v\cdot\nabla_x)F_2=&\underline{\Q(F_0,F_3)+\Q(F_3,F_0)}+\Q(F_1,F_2)+\Q(F_2,F_1)\\ &&\quad\underline{-\T(F_0,F_3,F_0)-\T(F_3,F_0,F_0)}-\T(F_1,F_2,F_0)-\T(F_2,F_1,F_0)\\
  &&\quad-\T(F_0,F_2,F_1)-\T(F_2,F_0,F_1)-\T(F_1,F_1,F_1)-\T(F_0,F_1,F_2)\\
  &&\qquad\quad-\T(F_1,F_0,F_2)-\underline{\T(F_0,F_0,F_3)};\\
  &\cdots\hfill\cdots \hfill \cdots\\
  O(\eps^n):& (\partial_t+v\cdot\nabla_x)F_n=&\Q(F_0,F_{n+1})+\Q(F_{n+1},F_0)-\T(F_0,F_{n+1},F_0)-\T(F_{n+1},F_0,F_0)\\
  & &\dis-\T(F_0,F_0,F_{n+1})+\sum_{\substack{i+j=n+1\\i,j<n+1}}\Q(F_i,F_j) -\sum_{\substack{i+j+k=n+1\\i,j,k<n+1}}\T(F_i,F_j,F_k),\\
  &\cdots\hfill\cdots \hfill \cdots
  \end{array}
\end{equation}

We firstly expect to recover the compressible Euler equations \eqref{Com-Euler} from \eqref{Coeff-SameOrder}. Corresponding to $O(\frac{1}{\eps})$, the $H$-theorem mentioned before combining with \eqref{CL-relations-QT} indicates that the leading order $F_0$ must be a local Fermi-Dirac distribution: $F_0=\mu$. Consequently, $\mu$ is expected to be the limiting distribution function. Then, giving a sight of Proposition \ref{Prop-L-proper} and equalities \eqref{CL-relations-QT}, we derive the compressible Euler system \eqref{Com-Euler} by taking inner products in $L^2(\d v)$ of the second equation in \eqref{Coeff-SameOrder} and $[1,\,v,\,|v|^2]$.

On the other hand, $\mu=F_{\f,\u,T}$ is a function of $\f,\,\u,\,T$. The compressible Euler system \eqref{Com-Euler} thereby can be recasted in terms of $\f,\,\u,\,T$ into
\begin{equation}\label{f-u-T-eq}
  \left\{
  \begin{aligned}
  &\partial_t \f+ \u\cdot\nabla_x \f=0,\\
  &\partial_t\u+ \u\cdot \nabla_x \u + T\nabla_x \f+\frac{5\E\nabla_x T}{2\rho T}=0,\\
  &\partial_t T+ \u\cdot\nabla_x T +\frac{2}{3}T\nabla_x\cdot\u=0.
  \end{aligned}
  \right.
\end{equation}

Once the leading order $\mu$ is determined, we turn to solve the coefficients $F_n\,(n\geqslant 1)$ from \eqref{Coeff-SameOrder}. For this purpose, we define the operator $\P$ as the projection from $L^2(\d v)$ to the null space $Null(\L)$ of $\L$:
\begin{equation}\label{P-op}
  \P:\,L^2(\d v)\, \longrightarrow \,Null(\L)=\mathrm{Span}\Big\{1,\,v-\u,\,|v-\u|^2\Big\}\sqrt{\mu(1-\mu)}.
\end{equation}
By Proposition \ref{Prop-L-proper}, we designate
\begin{equation}\label{PF-n-Exp}
  \P\left(\frac{F_n}{\sqrt{\mu(1-\mu)}}\right)=\left\{\frac{\rho_n}{\varrho}+u_n\cdot\frac{v-\u}{T} +\frac{\theta_n}{6T}\left(\frac{|v-\u|^2}{T}-\frac{3\rho}{\varrho}\right)\right\}\sqrt{\mu(1-\mu)},\quad n\geqslant 1
\end{equation}
for the decomposition for each $n\geqslant 1$:
\begin{equation*}
  \begin{aligned}
  \frac{F_n}{\sqrt{\mu(1-\mu)}}=&\P\left(\frac{F_n}{\sqrt{\mu(1-\mu)}}\right)+\h\left(\frac{F_n}{\sqrt{\mu(1-\mu)}}\right)\\
  =&\left\{\frac{\rho_n}{\varrho}+u_n\cdot\frac{v-\u}{T} +\frac{\theta_n}{6T}\left(\frac{|v-\u|^2}{T}-\frac{3\rho}{\varrho}\right) \right\}\sqrt{\mu(1-\mu)} +\h\left(\frac{F_n}{\sqrt{\mu(1-\mu)}}\right),
  \end{aligned}
\end{equation*}
where $(\rho_n,u_n,\theta_n)(t,x)$ are the coefficients of projection $\P$ and $\varrho(t,x) = \int_{\R^3}\mu(1-\mu)\d v$. From $O(\eps^{n-1})$ in \eqref{Coeff-SameOrder}, we deduce that the microscopic part of $\frac{F_n}{\sqrt{\mu(1-\mu)}}$ is
\begin{equation}\label{I-P-F-n}
    \begin{aligned}
    \h & \left(\frac{F_n}{\sqrt{\mu(1-\mu)}}\right)\\
    =&\L^{-1}\left(-\frac{(\partial_t+v\cdot\nabla_x)F_{n-1}}{\sqrt{\mu(1-\mu)}} +\sum_{\substack{i+j=n\\i,j<n}}\frac{\Q(F_i,F_j)}{\sqrt{\mu(1-\mu)}} -\sum_{\substack{i+j+k=n\\i,j,k<n}}\frac{\T(F_i,F_j,F_k)}{\sqrt{\mu(1-\mu)}}\right),\quad n\geqslant 1.
    \end{aligned}
\end{equation}
However, the hydrodynamic part of $\frac{F_n}{\sqrt{\mu(1-\mu)}}$ is determined by the equation corresponding to $O(\eps^n)$. For each $n \geqslant 1$, applying $\P$ to the equation corresponding to $O(\eps^n)$ divided by $\sqrt{\mu(1-\mu)}$, we compute directly to declare the fluid variables $(\rho_n,u_n,\theta_n)$ subject to
\begin{equation}\label{rho-u-theta-n}
  \left\{
  \begin{aligned}
  &\partial_t\rho_n+\nabla_x\cdot(\rho u_n+\rho_n \u)=0,\\
  &\rho(\partial_t u_n+\u\cdot\nabla_x u_n+ u_n\cdot\nabla_x\u)\\
  &\hspace{3cm}-\frac{\nabla_x\E}{\rho}\rho_n+\nabla_x\left(\frac{\rho T}{\varrho}\rho_n+\frac{\theta_n}{6T}\left(5\E - \frac{3\rho^2 T}{\varrho}\right)\right)=\mathcal{F}_u^\bot(F_n),\\
  &\left(\frac{5\E}{2T} - \frac{3\rho^2 }{2\varrho}\right)\Big(\partial_t\theta_n+\u\cdot\nabla_x \theta_n+\frac{2}{3}(\theta_n\nabla_x\cdot\u+3T\nabla_x\cdot u_n)\\
  &\hspace{9cm}+3u_n\cdot\nabla_x T\Big) =\mathcal{F}_\theta^\bot(F_n),
  \end{aligned}
  \right.
\end{equation}
where
\begin{equation}
  \mathcal{F}_u^\bot(F_n)=-\sum_{j=1}^{3}\partial_{x_j}\int_{\R^3}T \frac{F_n}{\sqrt{\mu(1-\mu)}}\B_{ij}\left(\frac{v-\u}{\sqrt{T}}\right)\d v,
\end{equation}
and
\begin{equation}
  \begin{aligned}
  \mathcal{F}_\theta^\bot(F_n)=&-\sum_{i=1}^{3}\partial_{x_i}\int_{\R^3}2T^{3/2}\frac{F_n}{\sqrt{\mu(1-\mu)}} \A_i\left(\frac{v-\u}{\sqrt{T}}\right)\d v\\
  &\qquad-\sum_{i=1}^{3}\partial_{x_i}\sum_{j=1}^{3}2T\u_j\int_{\R^3}\frac{F_n}{\sqrt{\mu(1-\mu)}} \B_{ij}\left(\frac{v-\u}{\sqrt{T}}\right)\d v\\
  &\qquad\qquad-2\u\cdot\mathcal{F}_u^\bot(F_n).
  \end{aligned}
\end{equation}
Here $\A\in\R^3$ and $\B\in\R^{3\times 3}$ are given by
\begin{equation}\label{AB}
  \begin{aligned}
  &\A_{i}\left(\frac{v-\u}{\sqrt{T}}\right)=\left(\frac{|v-\u|^2}{2T}-\frac{5}{2}\frac{\E}{\rho T}\right)\frac{v_i-\u_i}{\sqrt{T}}\sqrt{\mu(1-\mu)},\\
  &\B_{ij}\left(\frac{v-\u}{\sqrt{T}}\right)=\left(
  \frac{v_i-\u_i}{\sqrt{T}} \frac{v_j-\u_j}{\sqrt{T}} -\frac{|v-\u|^2}{3T}\delta_{ij}\right)\sqrt{\mu(1-\mu)}.
  \end{aligned}
\end{equation}
Finally, the initial data
\begin{equation}\label{In-data-n}
  (\rho_n,u_n,\theta_n)(0,x)=(\rho_n^\i,u_n^\i,\theta_n^\i)(x)\in \R\times\R^3\times\R,\quad n=1,2,\cdots
\end{equation}
are imposed on \eqref{rho-u-theta-n}.

\subsubsection{The acoustic limits from BFD equation} Since the acoustic equations are the linearization of compressible Euler equations \eqref{Com-Euler} about the constant state $(\f,\u,T)=(1,0,1)$, we will formally acquire the acoustic limit from BFD equation \eqref{SBFD} when $F_\eps$ is close to the global Fermi-Dirac distribution
\begin{equation}\label{mu-0}
  F_{1,0,1}\equiv\mu_0=\frac{1}{1+e^{\frac{|v|^2}{2}-1}}.
\end{equation}

To this end, we apply Taylor expansion to $\mu=F_{(\f,\u,T)}$ with
\begin{equation*}
  \f=1+\delta \f_1, \quad \u=\delta u, \quad T=1+\delta \theta,\,\delta>0
\end{equation*}
to get
\begin{equation}\label{mu-Taylor-Exp}
  \mu =\mu_0\Big[\,1+\delta(1-\mu_0)\Big(\f_1+v\cdot u+\frac{|v|^2}{2}\theta\Big)\,\Big]+O(\delta^2),
\end{equation}
Same as the course of \cite{BGL-2000ARMA}, the expansion \eqref{mu-Taylor-Exp} indicates that we should better introduce $G_\eps$ and $g_\eps$ defined by
\begin{equation}\label{Global-Exp}
  F_{\eps}=\mu_0+\delta G_{\eps}\equiv\mu_0+\delta \sqrt{\mu_0(1-\mu_0)}g_\eps.
\end{equation}
So that the scaled BFD equation \eqref{SBFD} can be rewritten as
\begin{equation}\label{g-eps-Eq}
  \begin{aligned}
  &\partial_t g_\eps + v\cdot \nabla_x g_\eps + \frac{1}{\eps}\L_0 g_\eps\\
  &\quad=\frac{\delta}{\eps\sqrt{\mu_0(1-\mu_0)}}\left[\Q(\widetilde{g}_\eps,\widetilde{g}_\eps)-\T(\widetilde{g}_\eps,\widetilde{g}_\eps,\mu_0) -\T(\mu_0,\widetilde{g}_\eps,\widetilde{g}_\eps)-\T(\widetilde{g}_\eps,\mu_0,\widetilde{g}_\eps) \right]\\
  &\qquad\quad-\frac{\delta^2}{\eps\sqrt{\mu_0(1-\mu_0)}}\T(\widetilde{g}_\eps,\widetilde{g}_\eps,\widetilde{g}_\eps),
  \end{aligned}
\end{equation}
where the operator $\L_0$ is defined by \eqref{L-op} with $\mu$ replaced by $\mu_0$ and $G_\eps\equiv\widetilde{g}_\eps=\sqrt{\mu_0(1-\mu_0)}g_\eps$. Then the Proposition \ref{Prop-C-conser-law} immediately implies the local conservation laws of mass, momentum and energy:
\begin{equation*}
\left\{
  \begin{aligned}
  \partial_t\langle  G_\eps\rangle+\nabla_x\cdot\langle  v \, G_\eps\rangle&=0,\\
  \partial_t\langle  v \, G_\eps\rangle+\nabla_x\cdot\langle v\otimes v\, G_\eps\rangle&=0,\\
  \partial_t\langle |v|^2\, G_\eps\rangle+\nabla_x\cdot\langle  v|v|^2 \, G_\eps\rangle&=0,
  \end{aligned}
\right.
\end{equation*}
where for any integrable function $\xi=\xi(v)$, we denote the integral by $\langle \xi \rangle = \int_{\R^3} \xi \d v$.

Now we are ready to give a formal theorem in the style of \cite{BGL-1991JSP} for the acoustic equations formulated as
\begin{equation}\label{AC-Sys}
\left\{
  \begin{aligned}
  \partial_t \sigma +\frac{2}{3}(\overline{K}_A-1)\nabla_x\cdot u&=0,\\
  \partial_t u +\nabla_x(\sigma+\theta)&=0,\\
  \partial_t\theta+\frac{2}{3}\nabla_x\cdot u&=0,
  \end{aligned}
\right.
\end{equation}
by assuming the relevant moments of $g_\eps$ converge in the sense of distributions as $\eps \to 0$, where $\overline{K}_A=\frac{5\overline{\E}}{2\overline{\rho}}$ with $\overline{\E}$ and $\overline{\rho}$ corresponding to $(\f,T)=(1,1)$ in \eqref{rho-u-E}.

\newtheorem*{thm}{Theorem}
\begin{thm}[Formal Acoustic Limit Theorem]
  Let $G_\eps$ be as in \eqref{Global-Exp} and the fluctuations $g_\eps$ given by \eqref{Global-Exp} be a family of weak solutions to equation \eqref{g-eps-Eq}. Assume $\{g_\eps\}_{\eps>0}$ converge in the sense of distributions to a function $g\in L^\infty\left(\d t; L^2(\d v)\right)$ as $\eps \to 0$. Furthermore, assume that the moments
  \begin{equation*}
    \langle G_\eps \rangle,\qquad \langle v G_\eps \rangle,\qquad \langle v\otimes v G_\eps \rangle,\qquad \langle v |v|^2 G_\eps \rangle
  \end{equation*}
  satisfy the local conservation laws and converge in the sense of distributions as $\eps \to 0$ to the corresponding moments
  \begin{equation*}
    \langle G \rangle,\qquad \langle v G \rangle,\qquad \langle v\otimes v G \rangle,\qquad \langle v |v|^2 G \rangle
  \end{equation*}
  with $G=\sqrt{\mu_0(1-\mu_0)}g$ and that
  \begin{equation*}
    \begin{aligned}
    &\L_0 g_\eps \to \L_0 g, \qquad \frac{\delta^2}{\sqrt{\mu_0(1-\mu_0)}}\T(G_\eps,G_\eps,G_\eps) \to 0,\\
    &\frac{\delta}{\sqrt{\mu_0(1-\mu_0)}}\left[\Q(G_\eps,G_\eps)-\T(G_\eps,G_\eps,\mu_0) -\T(\mu_0,G_\eps,G_\eps)-\T(G_\eps,\mu_0,G_\eps)\right]\to 0,
  \end{aligned}
  \end{equation*}
  in the sense of distributions as $\eps \to 0$. Then $g$ has the form
  \begin{equation*}
    g=\left\{\sigma+ u\cdot v+ \left(\frac{|v|^2}{2}-(\overline{K}_A-1)\right)\theta\right\}\sqrt{\mu_0(1-\mu_0)},
  \end{equation*}
  where $(\sigma, u, \theta)$ solve the acoustic equations \eqref{AC-Sys}.
\end{thm}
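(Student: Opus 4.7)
The plan follows the classical BGL-style formal limit argument, adapted to the Fermi-Dirac setting. It splits naturally into two stages: first, identify the form of the limit $g$ as an element of $\mathrm{Null}(\L_0)$; second, close the system by passing to the limit in the local conservation laws and evaluating the resulting Fermi-Dirac moments.

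For the first stage, I would multiply equation \eqref{g-eps-Eq} by $\eps$ to isolate the singular term, obtaining the schematic identity $\L_0 g_\eps = -\eps(\partial_t g_\eps + v\cdot\nabla_x g_\eps) + \delta\,R_1(g_\eps) + \delta^2\,R_2(g_\eps)$, where $R_1$ and $R_2$ collect the scaled $\Q$/$\T$ nonlinearities appearing on the right of \eqref{g-eps-Eq}. By the stated hypotheses every term on the right vanishes in the sense of distributions as $\eps \to 0$: the streaming contributions carry an explicit factor $\eps$, and the nonlinearities are postulated to converge to zero. Combined with the assumed convergence $\L_0 g_\eps \to \L_0 g$, this forces $\L_0 g = 0$. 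Since $\mathrm{Null}(\L_0)$ is the five-dimensional space $\mathrm{span}\{1,v_1,v_2,v_3,|v|^2\}\sqrt{\mu_0(1-\mu_0)}$ (the linearised $H$-theorem at the global Fermi-Dirac state $\mu_0$), $g$ admits a representation of the stated form after a linear change of basis that absorbs the shift by $(\overline{K}_A-1)\theta$; that shift is chosen precisely so that in the next step the tensor $\langle v\otimes v\,G\rangle$ becomes isotropic.

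For the second stage, I would pass to the limit in the local conservation laws and evaluate each moment of $G=\sqrt{\mu_0(1-\mu_0)}\,g$ explicitly. The key technical identity is
\begin{equation*}
\partial_{v_i}\mu_0 = -v_i\,\mu_0(1-\mu_0),
\end{equation*}
which, via integration by parts, converts every polynomial moment of $\mu_0(1-\mu_0)$ into one of $\mu_0$; in particular $\int v_i v_j\,\mu_0(1-\mu_0)\,\d v = \delta_{ij}\overline{\rho}$ and $\int v_i v_j|v|^2\,\mu_0(1-\mu_0)\,\d v = 5\,\overline{\E}\,\delta_{ij}$. A short computation then yields $\langle vG\rangle = \overline{\rho}\,u$ and $\langle v\otimes v\,G\rangle = \overline{\rho}(\sigma+\theta)\,I$, so the vector conservation law collapses instantly to $\partial_t u + \nabla_x(\sigma+\theta)=0$. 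The mass and energy conservations become two scalar equations for $(\partial_t\sigma,\partial_t\theta)$ driven by $\nabla_x\cdot u$; solving the resulting $2\times 2$ linear system using the definition $\overline{K}_A = 5\overline{\E}/(2\overline{\rho})$ delivers the remaining two equations of \eqref{AC-Sys}.

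The main subtlety I expect is not the passage to the limit itself -- which is essentially soft under the listed hypotheses -- but the algebraic bookkeeping at the level of moments of $\mu_0(1-\mu_0)$: verifying that the shift by $(\overline{K}_A-1)\theta$ in the parametrisation of $g$ is exactly what makes $\langle v\otimes v\,G\rangle$ isotropic, and then checking that the $2\times 2$ scalar system closes so as to produce precisely the coefficients $\tfrac{2}{3}(\overline{K}_A-1)$ and $\tfrac{2}{3}$ in front of $\nabla_x\cdot u$ in the acoustic equations \eqref{AC-Sys}.
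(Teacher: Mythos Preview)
Your proposal is correct and follows precisely the standard BGL-style argument that the paper invokes; note that the paper does not spell out a proof of this formal theorem at all---it simply states it ``in the style of \cite{BGL-1991JSP}'' and moves on---so your two-stage outline (first $\L_0 g=0$ from multiplying \eqref{g-eps-Eq} by $\eps$, then moment closure via the conservation laws) is exactly the intended justification. Your moment identities $\int v_iv_j\,\mu_0(1-\mu_0)\,\d v=\delta_{ij}\overline{\rho}$ and $\int v_iv_j|v|^2\,\mu_0(1-\mu_0)\,\d v=5\overline{\E}\,\delta_{ij}$, obtained from $\partial_{v_i}\mu_0=-v_i\mu_0(1-\mu_0)$, are correct and indeed yield $\langle v\otimes v\,G\rangle=\overline{\rho}(\sigma+\theta)I$ and $\langle v|v|^2 G\rangle=5\overline{\E}\,u$, from which the momentum and energy equations of \eqref{AC-Sys} follow immediately; the remaining $2\times2$ system for $(\partial_t\sigma,\partial_t\theta)$ closes once one uses $\overline{K}_A=5\overline{\E}/(2\overline{\rho})$ together with the non-degeneracy $\overline{K}_A\overline{E}_0\neq\tfrac{3}{2}\overline{\rho}$, which is the Cauchy--Schwarz strict inequality $(p_2^1)^2<p_0^1p_4^1$ already exploited in the proof of Proposition~\ref{Prop-fT-Es}.
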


Generally, the fluctuation amplitude $\delta$ is a function of $\eps$ satisfying
\begin{equation*}
  \delta\to 0,\;\text{ as }\eps\to 0.
\end{equation*}
On can take the form
\begin{equation*}
  \delta=\eps^m, \quad \text{ for any }\,m>0.
\end{equation*}
Based on the Formal Acoustic Limit Theorem, by comparing \eqref{Global-Exp} with the Taylor expansion of $\mu$ \eqref{mu-Taylor-Exp}, one can formally understand that $\{G_\eps\}_{\eps>0}$ converge to
\begin{equation}\label{G}
  G=\left\{\sigma+v\cdot u+\left(\frac{|v|^2}{2}-(\overline{K}_A-1)\right)\theta\right\}\mu_0(1-\mu_0),
\end{equation}
as $\eps\to 0$, while $(\sigma,u,\theta)$ satisfy the acoustic system \eqref{AC-Sys}.

\subsection{Truncated Hilbert Expansion}
Motivated by Caflish's work \cite{Caflisch-1980CPAM} and the improvement work Guo-Jang-Jiang \cite{GJJ-2010CPAM}, in the present paper we expect to employ Hilbert expansion approach to rigorously justify the higher order hydrodynamic corrections to the compressible Euler system from \eqref{SBFD}-\eqref{In-Data-SBFD}, as $\eps\to 0$. However, the more terms are expanded, the more special the solutions are. In this sense, the ansatz of truncating Hilbert expansion is taking the following form
\begin{equation}\label{Trun-HilExp}
  F_\eps=\mu+\sum_{n=1}^{3}\eps^n F_n+\eps^3 F_{R,\eps},
\end{equation}
where the parameters $(\f,\u,T)$ of $\mu$ are characterized by Proposition \ref{Prop-Relation-rhoEfT} once we determine the solution $(\rho,\u,\E)$ to the compressible Euler system \eqref{Com-Euler}. In addition, the coefficients $F_n$ ($n=1,2,3$) are of the forms
\begin{equation*}
  \begin{aligned}
  F_n=&\left[\,\P\left(\frac{F_n}{\sqrt{\mu(1-\mu)}}\right)+\h\left(\frac{F_n}{\sqrt{\mu(1-\mu)}}\right)\,\right]\sqrt{\mu(1-\mu)},\;n=1,2,\\
  F_3=&\h\left(\frac{F_3}{\sqrt{\mu(1-\mu)}}\right)\sqrt{\mu(1-\mu)},
  \end{aligned}
\end{equation*}
and $F_{R,\eps}$ is the remainder, where $\P\left(\tfrac{F_n}{\sqrt{\mu(1-\mu)}}\right)$ and $\h\left(\tfrac{F_n}{\sqrt{\mu(1-\mu)}}\right)$ are given by \eqref{PF-n-Exp} and \eqref{I-P-F-n} respectively. Precisely, the fluid variables $(\rho_n,u_n,\theta_n)$ ($n=1,2$) corresponding to $\P\left(\tfrac{F_n}{\sqrt{\mu(1-\mu)}}\right)$ are solutions of the linear hyperbolic system \eqref{rho-u-theta-n}-\eqref{In-data-n}. Therefore, $F_n$ ($n=1,2$) are determined specifically. For $F_3$, we take the form such that its fluid part vanishes. Therefore $F_3$ is not completely known.

Putting the expansion \eqref{Trun-HilExp} into the scaled BFD equation \eqref{SBFD} to acquire the remainder equation
\begin{equation}\label{Rem-eq}
  \begin{aligned}
    (\partial_t +v\cdot \nabla_x)&F_{R,\eps}+\frac{1}{\eps}\sqrt{\mu(1-\mu)}\L\left(\frac{F_{R,\eps}}{\sqrt{\mu(1-\mu)}}\right)\\
    =&\sum_{i=1}^{3}\eps^{i-1}\Big(\,\Q(F_i,F_{R,\eps})+\Q(F_{R,\eps},F_i)\,\Big) +\eps^2\Q(F_{R,\eps},F_{R,\eps})\\
    &-\sum_{\substack{i+j=1\\i,j\leqslant 3}}^{6} \eps^{i+j-1}\Big(\,\T(F_{R,\eps},F_i,F_j)+\T(F_i,F_{R,\eps},F_j)+\T(F_i,F_j,F_{R,\eps})\,\Big)\\
    &\qquad-\sum_{i=0}^{3}\eps^{i+2}\Big(\,\T(F_i,F_{R,\eps},F_{R,\eps})+\T(F_{R,\eps},F_i,F_{R,\eps})+\T(F_{R,\eps},F_{R,\eps},F_i)\,\Big)\\
    &\qquad\qquad-\eps^5\T(F_{R,\eps},F_{R,\eps},F_{R,\eps})+R_\eps.
  \end{aligned}
\end{equation}
where
\begin{equation*}
  R_\eps=\sum_{\substack{i+j\geqslant 4\\i,j\leqslant 3}}\eps^{i+j-4}\Q(F_i,F_j)-\sum_{\substack{i+j+k\geqslant 4\\i,j,k\leqslant 3}}\eps^{i+j+k-4}\T(F_i,F_j,F_k)-(\partial_t +v\cdot \nabla_x)F_3.
\end{equation*}
Moreover, the remainder equation \eqref{Rem-eq} is given the initial data
\begin{equation}\label{In-data-ReEq}
  F_{R,\eps}(0,x,v)=F_{R,\eps}^\i(x,v).
\end{equation}

\subsection{Main Results}
Our goal is to rigorously justify the compressible Euler limit and the acoustic limit from the BFD equation. Then proving that the remainders $F_{R,\eps}$ of expansion \eqref{Trun-HilExp} will go to zero as $\eps\to 0$ is the key point of this paper. Once this point is showing mathematically, a class of special solutions of BFD equation \eqref{SBFD}-\eqref{In-Data-SBFD} are found according to the same discussions as remarkable work in Caflisch \cite{Caflisch-1980CPAM}.

{\it Notations.} For the simplicity of presentation, some conventions shall be introduced. Firstly, we use the notation $H^s$ to denote the standard Sobolev space $W^{s,2}(\R^3,\d x)$ with corresponding norm $\|\cdot\|_{H^s}$. We use $\langle \cdot,\cdot \rangle_{L_{x,v}^2}$ and $\langle \cdot,\cdot \rangle_{L_v^2}$ to denote the $L_{x,v}^2\equiv L^2(\R^3\times \R^3,\d x \d v)$ and $L_v^2\equiv L^2(\R^3, \d v)$ inner products respectively. Correspondingly, the notations $\|\cdot\|_2$ and $|\cdot|_{L_v^2}$ represent the $L_{x,v}^2$ norm and the $L_v^2$ norm. Similarly, the notations $\|\cdot\|_\infty$ and $|\cdot|_{L_x^\infty}$ represent the $L_{x,v}^\infty\equiv L^\infty(\R^3\times \R^3,\d x \d v)$ norm and $L_x^\infty\equiv L^\infty(\R^3, \d x)$ norm respectively. Particularly, $\|\cdot\|_{\nu(\mu)}$ and $|\cdot|_{\nu(\mu)}$ are designated respectively for the norms
\begin{equation*}
  \|g_1\|_{\nu(\mu)}^2=\iint_{\R^3\times \R^3} g_1^2(x,v) \nu(v) \d x \d v\quad\text{ and } \quad |g_2|_{\nu(\mu)}^2=\int_{\R^3} g_2^2(v) \nu(v)\d v,
\end{equation*}
where $\nu(v)\equiv \nu(\mu)(v)$ is the collision frequency of the form \eqref{nu}. Then a series of integrals are introduced in terms of $\f,\,\u,\,T$ as follows:
\begin{equation}\label{ints}
  \begin{aligned}
  &p_0^0(t,x)=\int_{\R^3}\frac{1}{1+e^{\frac{|v|^2}{2}-\f}}\d v, \quad p_2^0(t,x)=\int_{\R^3}\frac{|v|^2}{1+e^{\frac{|v|^2}{2}-\f}}\d v,\quad p_0^1(t,x)=\int_{\R^3}\frac{e^{\frac{|v|^2}{2}-\f}}{\left(1+e^{\frac{|v|^2}{2}-\f}\right)^2}\d v,\\
  &p_2^1(t,x)=\int_{\R^3}\frac{|v|^2e^{\frac{|v|^2}{2}-\f}}{\left(1+e^{\frac{|v|^2}{2}-\f}\right)^2}\d v,\quad p_4^1(t,x)=\int_{\R^3}\frac{|v|^4e^{\frac{|v|^2}{2}-\f}}{\left(1+e^{\frac{|v|^2}{2}-\f}\right)^2}\d v,\\ &E_0=\int_{\R^3} \mu(1-\mu)\d v, \qquad  E_2=\int_{\R^3}\left|v_1-\u_1\right|^{2}\mu(1-\mu)\d v,\qquad E_{4}=\int_{\R^3}\left|v_1-\u_1\right|^{4}\mu(1-\mu)\d v,\\
  &E_{22}=\int_{\R^3}\left|(v_1-\u_1) (v_2-\u_2)\right|^{2}\mu(1-\mu)\d v, \quad \qquad K_A=\frac{E_4+2E_{22}}{2TE_2},
  \end{aligned}
\end{equation}
while corresponding to $(\f,\u,T)=(1,0,1)$, we use the symbols $\overline{p}_{(\cdot)}^{(\cdot)}$,  $\overline{E}_{(\cdot)}$ and $\overline{K}_A$. In addition, we define $\overline{K}_g=\overline{K}_A-1$.

Finally, we use the symbol $\sim$ to mean that $A \sim B$ is equivalent to
\begin{equation*}
  C_1 B \leqslant A \leqslant C_2 B,
\end{equation*}
for some constant $C_1,\,C_2>0$.

{\it Preliminaries for the Main Results.} In the spirit of Caflisch \cite{Caflisch-1980CPAM}, we introduce a global Fermi-Dirac distribution
\begin{equation}\label{mu-F}
  \mu_F=\frac{1}{1+e^{\frac{|v|^2}{2T_m}-1}},
\end{equation}
where the constant $T_m>0$ satisfies the condition \eqref{T-bd}, i.e.,
\begin{equation*}
  T_m<\min_{(t,x)\in[0,\tau]\times\R^3} T(t,x)\leqslant \max_{(t,x)\in[0,\tau]\times\R^3} T(t,x)<2T_m.
\end{equation*}
Immediately, it follows by inspection that there are some $\frac{1}{2}<\alpha<1$ such that
\begin{equation}\label{mu-mu-F}
  c_1 \mu_F\leqslant \mu\leqslant c_2\mu_F^\alpha.
\end{equation}

On the other hand, given appropriate initial data $(\rho^\i,\u^\i,\E^\i)(x)$ for the compressible Euler system \eqref{Com-Euler}, we can derive the smooth solution $(\rho,\u,\E)$. Furthermore, the parameters $\f$ and $T$ of the local Fermi-Dirac distribution $\mu$ can be determined by Proposition \ref{Prop-Relation-rhoEfT}:
  \begin{equation*}
    \f= \phi\left(\frac{\rho}{\E^{3/5}}\right),\quad T=\left(\frac{\rho}{4\sqrt{2}\pi\Gamma(3/2)\mathcal{F}_{3/2}(\f)}\right)^{2/3} \equiv \psi(\rho,\f),
  \end{equation*}
where $\mathcal{F}_{p}(\f)=\frac{1}{\Gamma(p)}\int_{0}^{\infty}\frac{r^{p-1}}{1+e^{r-\f}}\d r$. Thus we can define the initial local Fermi-Dirac distribution
\begin{equation*}
  \mu^\i(x,v) \equiv \frac{1}{1+e^{\frac{|v-\u^\i|^2}{2T^\i}-\f^\i}},
\end{equation*}
where $\f^\i(x)=\phi\left(\frac{\rho^\i}{(\E^\i)^{3/5}}\right)$ and $T^\i(x)=\psi(\rho^\i,\f^\i)$.

Finally, giving a quick glance of the remainder equation \eqref{Rem-eq}, the left hand side indicates that we would better introduce
\begin{equation}\label{Reminder-f}
  f_{R,\eps}=\frac{F_{R,\eps}}{\sqrt{\mu(1-\mu)}}.
\end{equation}
By \cite{GJJ-2010CPAM}, we further define
\begin{equation}\label{Reminder-h}
  h_{R,\eps}=w(v)\frac{F_{R,\eps}}{\sqrt{\mu_F(1-\mu_F)}},\quad w(v)=(1+|v|)^l,\,l\geqslant 9.
\end{equation}

Now, we state our results on the compressible Euler limit from the scaled BFD equation \eqref{SBFD}.
\begin{theorem}\label{Thm-Euler-limit}
  Let the assumption \eqref{Hypo} hold. Assume the integer $s_1\geqslant 3$, and $s_0\geqslant s_1+2$. Let $(\rho,\u,\E)(t,x)$ be the bounded smooth solution to the compressible Euler system over time interval $[0,\tau]$ constructed in Theorem \ref{Thm-rhouE-Existence}. As a consequence, the parameters $(\f,T)(t,x)$ determined by Proposition \ref{Prop-Relation-rhoEfT} are smooth, which together with $\u$ govern the local Fermi-Dirac distribution $\mu$ and satisfy \eqref{f-bd} and \eqref{T-bd}. Then there is a small $\eps_0>0$ such that if
  \begin{equation}\label{F-eps-in-Exp}
    0 \leqslant F_\eps^\i(x,v)=\mu^\i(x,v)+\sum_{n=1}^{3}\eps^n F_n(0,x,v)+\eps^3 F_{R,\eps}^\i(x,v)\leqslant 1, \quad \forall\,0<\eps<\eps_0,
  \end{equation}
  and
  \begin{equation*}
    \mathbf{E}_R^\i:=\sup_{\eps\in(0,\eps_0)}\left(\|\frac{F_{R,\eps}^\i}{\sqrt{\mu^\i(1-\mu^\i)}}\|_2 +\eps^{\frac{3}{2}}\|(1+|v|)^l\frac{F_{R,\eps}^\i}{\sqrt{\mu_F(1-\mu_F)}}\|_\infty \right)<+\infty,
  \end{equation*}
  then for each $\eps\in(0,\eps_0)$, the scaled BFD equation \eqref{SBFD}-\eqref{In-Data-SBFD} admits a unique solution $F_\eps(t,x,v)$ over the time interval $[0,\tau]$ with the expansion
  \begin{equation*}
    0 \leqslant F_\eps(t,x,v)=\mu(t,x,v)+\sum_{n=1}^{3}\eps^n F_n(t,x,v)+\eps^3 F_{R,\eps}(t,x,v)\leqslant 1.
  \end{equation*}
  Furthermore, the remainders $F_{R,\eps}(t,x,v)$ satisfy the uniform bound
  \begin{equation}\label{Rem-uni-bd}
    \sup_{t\in[0,\tau]}\left(\|\frac{F_{R,\eps}}{\sqrt{\mu(1-\mu)}}\|_2 +\eps^{\frac{3}{2}}\|(1+|v|)^l\frac{F_{R,\eps}}{\sqrt{\mu_F(1-\mu_F)}}\|_\infty \right)\leqslant C(\tau,\mathbf{E}_{0,s_0}^\i,\mathbf{E}_{1,s_1}^\i,\mathbf{E}_{2,s_1}^\i,\mathbf{E}_R^\i ) < +\infty,
  \end{equation}
  where $\mathbf{E}_{0,s_0}^\i:=\|(\rho^\i,\u^\i,\E^\i)\|_{H^{s_0}}$ and $\mathbf{E}_{n,s_1}^\i:=\|(\rho_n^\i,u_n^\i,\theta_n^\i)\|_{H^{s_1}}$,\;n=1,\,2.
\end{theorem}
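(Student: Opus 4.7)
The first step is to construct every ingredient in the ansatz \eqref{Trun-HilExp}. Classical symmetric-hyperbolic theory applied to \eqref{Com-Euler} (equivalently to \eqref{f-u-T-eq}) provides a smooth solution $(\rho,\u,\E)$ on $[0,\tau]$ at the $H^{s_0}$ level (Theorem~\ref{Thm-rhouE-Existence} in the paper), and Proposition~\ref{Prop-Relation-rhoEfT} inverts it into $(\f,T)$, yielding the local Fermi-Dirac distribution $\mu$. The microscopic parts $\h(F_n/\sqrt{\mu(1-\mu)})$ for $n=1,2,3$ are read off from \eqref{I-P-F-n}, while the macroscopic coefficients $(\rho_n,u_n,\theta_n)$ for $n=1,2$ solve the linear symmetric-hyperbolic systems \eqref{rho-u-theta-n}--\eqref{In-data-n} whose coefficients are smooth functions of the Euler state. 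The regularity gap $s_0\geq s_1+2$ is exactly what is needed to feed the time and space derivatives of $\mu$ into those linear fluid systems without losing more than $H^{s_1}$ regularity for $(\rho_n,u_n,\theta_n)$; the $F_n$ and a suitable number of their derivatives are then bounded on $[0,\tau]\times\R^3\times\R^3$ with Gaussian-like decay in $v$ inherited from $\mu$.

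\textbf{$L^2$ estimate for the remainder.} Setting $f_{R,\eps}=F_{R,\eps}/\sqrt{\mu(1-\mu)}$, dividing the remainder equation \eqref{Rem-eq} by $\sqrt{\mu(1-\mu)}$ and testing against $f_{R,\eps}$, the local coercivity $\langle\L g,g\rangle_{L^2_v}\geq c_0|\h g|_{\nu(\mu)}^2$ produces a dissipation $\tfrac{c_0}{\eps}\|\h f_{R,\eps}\|_{\nu(\mu)}^2$. The forcing terms split into three types. First, the $O(1)$ linear-in-$F_{R,\eps}$ pieces (coming from $i=1$ in the $\Q$-sum and $i+j=1$ in the $\T$-sum) are controlled via the smoothness of $F_1$ and the $\K$-part of $\L$ by a fraction of the dissipation plus $C\|f_{R,\eps}\|_2^2$. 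Second, the self-interactions $\Q(F_R,F_R)$, $\T(F_R,\cdot,F_R)$ and $\T(F_R,F_R,F_R)$ carry explicit positive powers of $\eps$ and are handled by distributing one factor in $L^\infty$ via $\|F_{R,\eps}\|_\infty\lesssim\eps^{-3/2}(\eps^{3/2}\|h_{R,\eps}\|_\infty)$, so that the residual $\eps^k$ absorbs the $\eps^{-3/2}$. Third, the source $R_\eps$ is $O(1)$ uniformly in $\eps$ by the Hilbert construction.

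\textbf{Weighted $L^\infty$ estimate.} An $L^\infty$ bound on $h_{R,\eps}=w(v)F_{R,\eps}/\sqrt{\mu_F(1-\mu_F)}$ with $w(v)=(1+|v|)^l$, $l\geq 9$, is then established by splitting $\L=\nu(\mu)-\K$ and integrating the mild form for $h_{R,\eps}$ along the backward trajectories $(t-s,x-sv,v)$. The multiplicative frequency $\nu(v)/\eps$ yields a strong pointwise damping, the gain operator $\K$ is converted into a time integral of $\|f_{R,\eps}\|_2$ by the Guo-type double Duhamel iteration, and the nonlinear contributions are treated as in the $L^2$ step. The majorization $\mu\leq c_2\mu_F^{\alpha}$ from \eqref{mu-mu-F} is essential: it translates the natural local weight $\sqrt{\mu(1-\mu)}$ of the $F_n$'s into the fixed global weight $\sqrt{\mu_F(1-\mu_F)}$ that defines $h_{R,\eps}$, keeping the coefficients uniformly bounded on $[0,\tau]\times\R^3\times\R^3$.

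\textbf{Closing the argument and the main obstacle.} Defining $\mathbf{E}_\eps(t)=\|f_{R,\eps}(t)\|_2^2+\eps^3\|h_{R,\eps}(t)\|_\infty^2$, the two a priori estimates combine into a Gronwall inequality of the schematic form $\mathbf{E}_\eps(t)\leq C\mathbf{E}_\eps(0)+C\int_0^t\mathbf{E}_\eps(s)\,\d s+C\eps^{1/2}\sup_{[0,t]}\mathbf{E}_\eps(s)^{3/2}+C\tau$, so for $\eps$ sufficiently small a continuity argument closes \eqref{Rem-uni-bd} on $[0,\tau]$; existence of $F_{R,\eps}$ follows from a linear iteration compatible with the same estimates, and the pointwise bound $0\leq F_\eps\leq 1$ is preserved in time because $\sum_{n=1}^{3}\eps^n F_n+\eps^3 F_{R,\eps}$ is a uniform $o(1)$ perturbation of the strictly interior $\mu\in(0,1)$. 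The principal obstacle is the trilinear structure of $\T$ combined with the ubiquitous factor $(1-\mu)$: unlike the quadratic Boltzmann collision operator, every $\T$-term has three slots, so one must carefully arrange which slot carries $F_{R,\eps}$ in order to distribute the $L^\infty$ norm without spoiling the $\eps^{3/2}$-scaling, and the implicit nonlinear inversion $(\rho,\u,\E)\leftrightarrow(\f,\u,T)$ supplied by Proposition~\ref{Prop-Relation-rhoEfT} obstructs any direct transcription of the classical Boltzmann analysis. Devising the required novel nonlinear estimates on $\Q$ and $\T$ in the mixed weights $\sqrt{\mu(1-\mu)}$ and $w(v)/\sqrt{\mu_F(1-\mu_F)}$ is the technical heart of the proof.
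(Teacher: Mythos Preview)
Your proposal is correct and follows essentially the same approach as the paper: the $L^2$--$L^\infty$ framework of Guo--Jang--Jiang, with the $L^2$ energy estimate driven by the coercivity of $\L$ and the weighted $L^\infty$ estimate obtained by a double Duhamel iteration along characteristics, closed by a Gr\"onwall/continuity argument. The only point you glossed over is the transport commutator $\tfrac{(\partial_t+v\cdot\nabla_x)\sqrt{\mu(1-\mu)}}{\sqrt{\mu(1-\mu)}}\,f_{R,\eps}$, which grows like $|v|^3$ and in the paper is handled by splitting $\{|v|\gtrless \kappa/\sqrt{\eps}\}$ and invoking $h_{R,\eps}$ on the high-velocity part (this is why $l\geq 9$ is needed), but this fits naturally into the scheme you outlined.
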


\begin{remark}
  Since the properties of operator $\L$ given in Proposition \ref{Prop-L-proper} are similar to those of the linearized Boltzmann collision operator and the operators $\Q$ and $\T$ has the estimates \eqref{Q-Es} and \eqref{T-Es}, we declare by the same arguments as Lemma A.1 and Lemma A.2 in \cite{Guo-2006CPAM} that the initial remainder $F_{R,\eps}^\i(x,v)$ in \eqref{F-eps-in-Exp} can be chose so that $F_\eps^\i(x,v) \geqslant 0$. On the other hand, the fact $c\leqslant 1-\mu \leqslant 1$ for some positive constant $c$ implies $F_\eps^\i(x,v) \leqslant 1$.
\end{remark}

\begin{remark}
  Combining with the Proposition \ref{Prop-F-n-Es} where the estimates of $F_n$ (n=1,2,3) are given, Theorem \ref{Thm-Euler-limit} indicates that
  \begin{equation*}
    \sup_{t\in[0,\tau]}\left(\|\frac{F_\eps-\mu}{\sqrt{\mu(1-\mu)}}\|_2 +\|(1+|v|)^l\frac{F_\eps-\mu}{\sqrt{\mu_F(1-\mu_F)}}\|_\infty \right)\leqslant C\eps\to 0,\;\text{ as } \eps\to 0.
  \end{equation*}
  Therefore we have justified the compressible Euler limit from BFD equation.
\end{remark}
Having the results of compressible Euler limit, our main results are established for the acoustic limit from BFD equation:
\begin{theorem}\label{Thm-AC-Limit}
  Let the assumption \eqref{Hypo} hold and $\tau>0$ be any given finite time. Assume
  \begin{equation}\label{In-data-AC}
    (\sigma,u,\theta)(0,x)=(\sigma^\i,u^\i,\theta^\i)(x)\in H^s(\R^3),\quad s\geqslant 4,
  \end{equation}
  is the initial data of the acoustic system \eqref{AC-Sys}. Let $\mu^\delta(0,x,v)$ be the local Fermi-Dirac distribution governed by the initial datum $ 1+\delta(\sigma^\i - \overline{K}_g \theta^\i),\, \delta u^\i$ and $ 1+\delta\theta^\i$:
  \begin{equation*}
    \mu^\delta(0,x,v)\equiv \frac{1}{1+\exp\left\{\frac{|v-\delta u^\i|^2}{2(1+\delta\theta^\i)}-[1+\delta(\sigma^\i - \overline{K}_g \theta^\i)]\right\}},
  \end{equation*}
  where $\overline{K}_g= \overline{K}_A -1$. Let the initial Hilbert expansion be
  \begin{equation*}
    0\leqslant F_\eps^\i(x,v)=\mu^\delta(0,x,v)+\sum_{n=1}^{3}\eps^n F_n(0,x,v)+\eps^3 F_{R,\eps}(0,x,v) \leqslant 1.
  \end{equation*}
  Then there exists an $\eps_0^\prime>0$, a $\delta_0>0$ and a constant $C>0$ such that if
  \begin{equation*}
    \sup_{\eps\in(0,\eps_0^\prime)}\left\{\|F_{R,\eps}(0)\|_2 + \eps^{3/2} \|F_{R,\eps}(0)\|_\infty\right\}< +\infty,
  \end{equation*}
  then for each $0<\eps\leqslant \eps_0^\prime$ and $0<\delta\leqslant\delta_0$, there holds
  \begin{equation}\label{G-eps-G-Es}
    \sup_{t\in[0,\tau]} \|G_\eps-G\|_\infty + \sup_{t\in[0,\tau]} \|G_\eps-G\|_2 \leqslant C\,(\frac{\eps}{\delta}+\delta),
  \end{equation}
  where $G_\eps$ and $G$ are defined by \eqref{Global-Exp} and \eqref{G} and the constant $C$ is depending only on the initial data \eqref{In-data-AC}.
\end{theorem}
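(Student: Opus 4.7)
The plan is to reduce the statement to the compressible Euler limit Theorem \ref{Thm-Euler-limit}, applied with an $O(\delta)$-small perturbation of the constant equilibrium $(\f,\u,T)=(1,0,1)$ as the underlying fluid profile. Once Theorem \ref{Thm-Euler-limit} supplies the unique truncated Hilbert expansion of $F_\eps$, the estimate \eqref{G-eps-G-Es} follows by the clean decomposition
\[
G_\eps - G \;=\; \frac{\mu - \mu_0}{\delta} - G \;+\; \sum_{n=1}^{3} \frac{\eps^n F_n}{\delta} \;+\; \frac{\eps^3 F_{R,\eps}}{\delta},
\]
whose three pieces will be controlled by, respectively, a second-order Taylor expansion of $\mu$ about $\mu_0$, the smallness $F_n=O(\delta)$, and the uniform remainder bound \eqref{Rem-uni-bd}.

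First I would construct the compressible Euler solution $(\rho,\u,\E)$ on $[0,\tau]$ from initial data corresponding via Proposition \ref{Prop-Relation-rhoEfT} to $(\f^\i,\u^\i,T^\i)=(1+\delta(\sigma^\i-\overline{K}_g\theta^\i),\,\delta u^\i,\,1+\delta\theta^\i)$. Local well-posedness (Theorem \ref{Thm-rhouE-Existence}) supplies such a solution, and standard perturbative analysis gives $\|(\f-1,\u,T-1)\|_{H^{s_0}}\le C\delta$ uniformly on $[0,\tau]$; in particular \eqref{Hypo}, \eqref{f-bd}, \eqref{T-bd} hold with constants independent of $\delta$ provided $\delta_0$ is chosen small enough. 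Furthermore, writing the first-order perturbation $(\f_1,u_1,\theta_1)$ of the Euler system about $(1,0,1)$ and applying the change of variables $\f_1=\sigma-\overline{K}_g\theta$, $u_1=u$, $\theta_1=\theta$, the linearized Euler system collapses into the acoustic system \eqref{AC-Sys}, so by uniqueness the leading-order Euler perturbation coincides with the acoustic solution. Next I would prescribe vanishing initial data $(\rho_n^\i,u_n^\i,\theta_n^\i)=0$ for $n=1,2$; since the coefficients and source terms of the linear hyperbolic system \eqref{rho-u-theta-n}--\eqref{In-data-n} are then all $O(\delta)$, we obtain $F_n=O(\delta)$ in every relevant norm (including those controlling $\mathbf{E}^\i_{n,s_1}$), and the same reasoning applied to \eqref{I-P-F-n} with $n=3$ yields $F_3=O(\delta)$.

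Applying Theorem \ref{Thm-Euler-limit} then produces $F_\eps$ on $[0,\tau]$ with $0\le F_\eps\le 1$ and the remainder bound \eqref{Rem-uni-bd}, whose right-hand constant remains uniformly bounded as $\delta\to 0$ because every input norm $\mathbf{E}^\i_{0,s_0}$, $\mathbf{E}^\i_{n,s_1}$, $\mathbf{E}^\i_R$ is either $O(1)$ or $O(\delta)$. A second-order Taylor expansion of $\mu$ about $\mu_0$ along the lines of \eqref{mu-Taylor-Exp} then gives
\[
\frac{\mu-\mu_0}{\delta} \;=\; \mu_0(1-\mu_0)\bigl\{\sigma + v\cdot u + \bigl(\tfrac{|v|^2}{2}-\overline{K}_g\bigr)\theta\bigr\} \;+\; O(\delta) \;=\; G + O(\delta),
\]
with the $O(\delta)$ remainder measured in both $L^2_{x,v}$ and the weighted $L^\infty_{x,v}$ norms, using the exponential tail of $\mu_0(1-\mu_0)$ to absorb the polynomial velocity prefactors generated by the Taylor remainder. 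Combining this with $\|\eps^n F_n/\delta\|\le C\eps^n$ for $n=1,2,3$, and with $\|\eps^3 F_{R,\eps}/\delta\|_2\le C\eps^3/\delta$ and $\|\eps^3 F_{R,\eps}/\delta\|_\infty\le C\eps^{3/2}/\delta$ extracted from \eqref{Rem-uni-bd}, and absorbing $\eps^n,\,\eps^{3/2}\le\eps$ for $\eps\le 1$, yields the claimed bound \eqref{G-eps-G-Es}.

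The main obstacle is ensuring that every constant appearing in the chain of estimates of Theorem \ref{Thm-Euler-limit} stays uniformly bounded as $\delta\to 0$, so that no hidden factor of $\delta^{-1}$ degrades the final bound. Since the coefficients and sources in the remainder equation \eqref{Rem-eq} depend smoothly on $\mu$, $F_n$ and their derivatives --- all of which scale as $O(1)$ or $O(\delta)$, never $O(\delta^{-1})$ --- a careful bookkeeping of the Gr\"onwall-type argument underlying the energy estimates yields constants depending only on the size of the Euler background, and hence uniformly bounded as $\delta\to 0$. A secondary point is the rigorous perturbative identification of the first-order Euler perturbation with the acoustic solution, which is handled by a standard energy estimate on the symmetric hyperbolic system governing $(\f-1-\delta\f_1)/\delta^2$ and its analogues in $\u$ and $T$.
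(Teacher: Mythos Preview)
Your proposal is correct and follows the same overall strategy as the paper: apply Theorem~\ref{Thm-Euler-limit} to the $O(\delta)$-perturbed Euler background, then Taylor-expand the local Fermi--Dirac distribution about $\mu_0$ to extract $G$. The paper's execution differs in two streamlining respects. First, instead of separating out $\sum_n \eps^n F_n/\delta$ and invoking your $F_n=O(\delta)$ argument, the paper groups these terms with the remainder as $(F_\eps-\mu^\delta)/\delta$ and bounds the whole thing by $C\eps/\delta$ directly from the $\|F_\eps-\mu\|\le C\eps$ consequence of Theorem~\ref{Thm-Euler-limit}; your $F_n=O(\delta)$ observation (via vanishing fluid initial data) is correct but unnecessary, since even $F_n=O(1)$ already gives $\|\eps^n F_n/\delta\|\le C\eps/\delta$. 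Second, the paper carries out the Euler--acoustic $O(\delta^2)$ comparison in the $(\rho,\u,\E)$ variables (Lemma~\ref{Lem-diff-2-order}) and only afterward passes to $(\f^\delta,\u^\delta,T^\delta)$ via Proposition~\ref{Prop-Relation-rhoEfT} and a Taylor expansion of $\mu^\delta$ in $\delta$; this sidesteps the difficulty you would face working directly in the $(\f,\u,T)$ system \eqref{f-u-T-eq}, whose second equation involves $\E/\rho$ as a nonlinear integral functional of $(\f,T)$.
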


\subsection{Sketch of Ideas}
As one of a series of works, the compressible Euler limit from BFD equation is the easiest of all hydrodynamic limits in formal derivation, but also the most difficult one for which giving a rigorous proof. From the mathematical point of view, we expect to utilize the Hilbert expansion with a reminder $F_{R,\eps}$ to certify this limiting process in $L^2-L^\infty$ framework. For this purpose, the interplay between $L^2$ estimates and $L^\infty$ estimates on the remainder $F_{R,\eps}$ play a key role.

In view of the remainder equation \eqref{Rem-eq}, the equation governed by $f_{R,\eps}$ has the form
\begin{equation*}
  \partial_t f_{R,\eps}+v\cdot \nabla_x f_{R,\eps}+\frac{1}{\eps}\L f_{R,\eps}= \text{ some other terms}.
\end{equation*}
The coercivity \eqref{coe-L} of the operator $\L$ provides a kinetic dissipative rate $\frac{\lambda_0}{\eps} \|\h f_{R,\eps}\|_{\nu(\mu)}^2$ for us in $L^2$ estimates. This causes $\|f_{R,\eps}\|_2$ to be bounded by $\eps^2\|h_{R,\eps}\|_\infty$. On the other hand, the equation for $h_{R,\eps}$ is of the form \eqref{Rem-h-eq}:
\begin{equation*}
  \partial_t h_{R,\eps}+ v\cdot\nabla_x h_{R,\eps}+\frac{\nu(\mu)}{\eps}h_{R,\eps}-\frac{1}{\eps} \K_w h_{R,\eps}= \text{ some other terms}.
\end{equation*}
Then, the norm $\eps^{3/2}\|h_{R,\eps}\|_\infty$ can be controlled by $\eps^{3/2}\|h_{R,\eps}(0)\|_\infty+\|f_{R,\eps}\|_2$. However, one can find there exists a disparity of order $\eps^{3/2}$ between $L^2$ norm and $L^\infty$ norm. This is mainly caused by the lack of structure of dissipative rate $\frac{\lambda_0}{\eps} \|\h f_{R,\eps}\|_{\nu(\mu)}^2$ in $L^\infty$ estimates.

Once we acquire the interplay above, we will arrive at the uniform in $\eps$ bound
\begin{equation*}
  \sup_{t\in[0,\tau]}\left\{\|f_{R,\eps}(t)\|_2+\eps^{3/2}\|h_{R,\eps}(t)\|_\infty\right\} \leqslant C,
\end{equation*}
under the assumption on uniform boundedness of $\left\{\|f_{R,\eps}(0)\|_2+\eps^{3/2}\|h_{R,\eps}(0)\|_\infty\right\}$ with respect to small $\eps>0$. In the sequel, we prove that $\eps^3 F_{R,\eps}$ is a infinitesimal as $\eps\to 0$.

As for the acoustic limit, our purpose is to estimate $G_\eps-G$. On one hand, from the formal analysis in subsection \ref{subsec-FormalAnalysis}, we know that as $\eps \to 0$, $F_\eps$ is close to the local Fermi-Dirac distribution $\mu^\delta$ constructed by the smooth solution $(\rho^\delta,\u^\delta,\E^\delta)$ to the compressible Euler equation. Consequently, we first prove that
\begin{equation*}
  F_\eps-\mu^\delta=O(\eps).
\end{equation*}
On the other hand, same as the expansion \eqref{mu-Taylor-Exp}, we then are able to establish that
\begin{equation*}
  \mu^\delta-\mu_0=\delta G+ O(\delta^2).
\end{equation*}
Give a glimpse of \eqref{Global-Exp}, by the two estimates above we conclude our Theorem \ref{Thm-AC-Limit}.

It should be point out that compared with \cite{GJJ-2010CPAM}, the limiting process in the case of BFD equation is more complex. The parameters $(\rho_B,\u_B,T_B)$ of local Maxwellian $M=\frac{\rho_B}{(2\pi T_B)^{3/2}}\exp\{-\frac{|v-\u_B|^2}{2 T_B}\}$ can be determined by the compressible Euler equations deduced from classical Boltzmann equation
\begin{equation*}
\left\{
  \begin{aligned}
  &\partial_t \rho_B+\nabla_x\cdot (\rho_B \u_B)=0,\\
  &\partial_t(\rho_B \u_B)+\nabla_x\cdot(\rho_B \u_B\otimes \u_B)+\nabla_x p=0,\\
  &\partial_t \left(\frac{1}{2}\rho_B|\u_B|^2+\frac{3}{2}\rho_B T_B\right)+\nabla_x\cdot\left[\,\rho_B \u_B\left(\frac{1}{2} |\u_B|^2 +\frac{3}{2}T_B\right)\,\right] + \nabla_x\cdot (p\u_B)=0.
  \end{aligned}
\right. \tag{CEB}
\end{equation*}
However, the compressible Euler equations \eqref{Com-Euler} deduced from BFD equation can only admit $(\rho,\u,\E)$ and the map (Recall \eqref{rho-u-E})
\begin{equation*}
  \Phi \,:\; (\rho,\u,\E)\longrightarrow(\f,\u,T),
\end{equation*}
is implicit, while
\begin{equation*}
  \rho_B=\int_{\R^3} M \d v,\quad \rho_B \u_B=\int_{\R^3}v M \d v,\quad \rho_B T_B=\frac{1}{3}\int_{\R^3}|v-\u_B|^2 M \d v.
\end{equation*}

We tried to solve $\f$, $\u$ and $T$ from the equations \eqref{f-u-T-eq} until we were aware it was impassable since the second equation in \eqref{f-u-T-eq} is indeed a nonlinear integral equation. Nevertheless, Theorem I.2 in \cite{Zakrevskiy} provide us a way to find the relationship between $(\rho,\u,\E)$ and $(\f,\u,T)$. Based on this Theorem, for $(\rho,\u,\E)\in C^k$ ($k \geqslant 1$) we only know $(\f,\u,T)$ is $C^1$ smooth, which is not enough to prove the well-posedness of the linear hyperbolic system \eqref{rho-u-theta-n}. We thereby have to turn to a technical assumption \eqref{Hypo}. We firmly believe that $(\f,\u,T)$ has higher smoothness. The discussion of the hypothesis\eqref{Hypo} will be left elsewhere.

\vskip\baselineskip
The organization of this paper is as follows: the next section contributes to the validity of the compressible Euler equations \eqref{Com-Euler} and the linear hyperbolic system \eqref{rho-u-theta-n}-\eqref{In-data-n} governed by $(\rho_n,u_n,\theta_n)$. Especially, we will obtain the relationship between the solution $(\rho,\u,\E)$ to the compressible Euler equations and the parameters $(\f,\u,T)$ of the local Fermi-Dirac distribution $\mu$. Section \ref{Sec-CE-Lim} focuses on the proof of Theorem \ref{Thm-Euler-limit} via the $L^2-L^\infty$ framework. In detail, the nonlinear energy method is used to acquire the $L^2$ and $L^\infty$ estimates of the reminder $F_{R,\eps}$. The last section establishes the acoustic limit from BFD equation.

\section{Uniform Bounds for the Coefficients of Hilbert Expansion}\label{Sec-Uni-bd}
The material in this section is to derive the uniform bounds for the expansion terms $F_n\;(n=1,2,3)$, and more important, the estimates for the parameters $(\f,\u,T)$ of local Fermi-Dirac distribution $\mu$. For this purpose, $\tfrac{F_n}{\sqrt{\mu(1-\mu)}}$ will be decomposed into the fluid part $\P\tfrac{F_n}{\sqrt{\mu(1-\mu)}}$ and the kinetic part $\h \tfrac{F_n}{\sqrt{\mu(1-\mu)}}$, namely,
\begin{equation*}
  \tfrac{F_n}{\sqrt{\mu(1-\mu)}} = \P\tfrac{F_n}{\sqrt{\mu(1-\mu)}} + \h \tfrac{F_n}{\sqrt{\mu(1-\mu)}},\quad n=1,2, \quad \tfrac{F_3}{\sqrt{\mu(1-\mu)}} = \h \tfrac{F_3}{\sqrt{\mu(1-\mu)}}.
\end{equation*}
For the fluid part, the existence and uniqueness of local-in-time smooth solutions to the compressible Euler system \eqref{Com-Euler} and linear hyperbolic system \eqref{rho-u-theta-n} are studied respectively. For the kinetic part, we give their decay property in $v$.

Now let us forget $(\rho,\u,\E)$ is the solution to the system \eqref{Com-Euler} for a while. Let $\widetilde{\tau}>0$ be any finite time, for any given functions $\f,\,T \in C^1([0,\widetilde{\tau}\,]\times\R^3)$, setting
\begin{equation}\label{Relation-rhoEfT}
  \rho=T^{3/2}\int_{\R^3}\frac{1}{1+e^{\frac{|v|^2}{2}-\f}}\d v, \qquad \mathcal{E}= \frac{1}{3}T^{5/2}\int_{\R^3}\frac{|v|^2}{1+e^{\frac{|v|^2}{2}-\f}}\d v,
\end{equation}
then $(\rho,\,\E)\in C^1([0,\widetilde{\tau}\,]\times\R^3)$ provided $\f$ is bounded, and we have the following proposition:
\begin{proposition}[Theorem I.2, \cite{Zakrevskiy}]\label{Prop-Relation-rhoEfT}
  The ratio $\frac{\rho}{\E^{3/5}}$ depends only on $\f$. Setting $J=\left(\frac{8\sqrt{2}\pi}{3}\right)^{2/5}(\frac{5}{2})^{3/5}\approx 4.65819$, then the map
  \begin{equation*}
    \varphi:\,\f\mapsto \frac{\rho}{\E^{3/5}}
  \end{equation*}
  is $C^\infty(\R;(0,J))$ and strictly monotone. Therefore, there exists $\phi=\varphi^{-1}:\,\frac{\rho}{\E^{3/5}}\mapsto \f$, i.e.
  \begin{equation}\label{f}
    \phi\in C^1((0,J);\mathbb{R}),\qquad \f= \phi\left(\frac{\rho}{\E^{3/5}}\right).
  \end{equation}
  Then the function $T$ is given by
  \begin{equation}\label{T}
    T=\left(\frac{\rho}{4\sqrt{2}\pi\Gamma(3/2)\mathcal{F}_{3/2}(\f)}\right)^{2/3}\stackrel{\triangle}{=}\psi(\rho,\f),
  \end{equation}
  where $\mathcal{F}_{p}(\f)=\frac{1}{\Gamma(p)}\int_{0}^{\infty}\frac{r^{p-1}}{1+e^{r-\f}}\d r$.
\end{proposition}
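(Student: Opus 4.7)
The plan is to reduce the claim to asymptotic and positivity properties of the Fermi--Dirac integrals $\mathcal{F}_p(\f)$. First, I would evaluate the defining integrals in spherical coordinates with the substitution $s=r^2/2$:
\begin{equation*}
\int_{\R^3}\frac{\d v}{1+e^{|v|^2/2-\f}}=4\sqrt{2}\pi\,\Gamma(3/2)\mathcal{F}_{3/2}(\f),\qquad
\int_{\R^3}\frac{|v|^2\d v}{1+e^{|v|^2/2-\f}}=8\sqrt{2}\pi\,\Gamma(5/2)\mathcal{F}_{5/2}(\f).
\end{equation*}
Using $\Gamma(5/2)=\tfrac{3}{2}\Gamma(3/2)$ this produces $\rho=4\sqrt{2}\pi\Gamma(3/2)\,T^{3/2}\mathcal{F}_{3/2}(\f)$ and $\E=4\sqrt{2}\pi\Gamma(3/2)\,T^{5/2}\mathcal{F}_{5/2}(\f)$, so the powers of $T$ cancel in the ratio $\rho/\E^{3/5}$, yielding
\begin{equation*}
\frac{\rho}{\E^{3/5}}=(4\sqrt{2}\pi\Gamma(3/2))^{2/5}\,\frac{\mathcal{F}_{3/2}(\f)}{\mathcal{F}_{5/2}(\f)^{3/5}}=:\varphi(\f).
\end{equation*}
This proves the first assertion, and solving for $T$ in the expression for $\rho$ immediately yields the formula $T=\psi(\rho,\f)$.

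Smoothness of $\varphi$ on $\R$ will follow from differentiation under the integral sign, as the integrand of $\mathcal{F}_p$ and all its $\f$-derivatives are dominated locally in $\f$ by integrable functions of $r$. An integration by parts in $r$ also supplies the standard recursion $\mathcal{F}_p'(\f)=\mathcal{F}_{p-1}(\f)$, so
\begin{equation*}
\varphi'(\f)=\frac{(4\sqrt{2}\pi\Gamma(3/2))^{2/5}}{\mathcal{F}_{5/2}(\f)^{8/5}}\Bigl(\mathcal{F}_{1/2}(\f)\mathcal{F}_{5/2}(\f)-\tfrac{3}{5}\mathcal{F}_{3/2}(\f)^2\Bigr).
\end{equation*}

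The heart of the argument is the strict positivity of this bracket. My plan is to integrate the original Fermi--Dirac integral by parts once to obtain the representation $\Gamma(p+1)\mathcal{F}_p(\f)=\int_0^\infty r^p w(r)\,\d r$ with the strictly positive weight $w(r)=e^{r-\f}/(1+e^{r-\f})^2$, and then apply Cauchy--Schwarz in $L^2(w\,\d r)$ with the factorization $r^{3/2}=r^{1/4}\cdot r^{5/4}$, giving $(\Gamma(5/2)\mathcal{F}_{3/2})^2\leq \Gamma(3/2)\Gamma(7/2)\,\mathcal{F}_{1/2}\mathcal{F}_{5/2}$. Since $\Gamma(3/2)\Gamma(7/2)/\Gamma(5/2)^2=5/3$, this rearranges to $\tfrac{3}{5}\mathcal{F}_{3/2}^2<\mathcal{F}_{1/2}\mathcal{F}_{5/2}$ with strict inequality, because the Cauchy--Schwarz equality condition would force $r^{1/4}$ and $r^{5/4}$ to be proportional $w\,\d r$-a.e., which fails on $(0,\infty)$. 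Hence $\varphi'>0$ on all of $\R$.

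Finally, I would identify the image of $\varphi$ via its two asymptotic regimes. As $\f\to-\infty$, $\mathcal{F}_p(\f)\sim e^{\f}$ and therefore $\varphi(\f)\sim e^{2\f/5}\to 0$; as $\f\to+\infty$, the Sommerfeld asymptotic $\mathcal{F}_p(\f)\sim \f^p/\Gamma(p+1)$ combined with $\Gamma(3/2)=\sqrt{\pi}/2$, $\Gamma(5/2)=3\sqrt{\pi}/4$, $\Gamma(7/2)=15\sqrt{\pi}/8$ gives $\lim_{\f\to\infty}\varphi(\f)=(8\sqrt{2}\pi/3)^{2/5}(5/2)^{3/5}=J$ after simplification. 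Continuity together with the strict monotonicity then produces a global $C^\infty$ inverse $\phi:(0,J)\to\R$ via the inverse function theorem (since $\varphi'$ never vanishes), and $\f=\phi(\rho/\E^{3/5})$ as claimed. The main technical obstacle is the strict Cauchy--Schwarz step underlying the monotonicity of $\varphi$; the rest is routine bookkeeping of integrals and asymptotics.
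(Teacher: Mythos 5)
Your proposal is correct and self-contained. The paper itself does not give a proof of Proposition \ref{Prop-Relation-rhoEfT} --- it is quoted as Theorem I.2 of Zakrevskiy's thesis --- so there is no in-paper argument to compare against. The reduction $\rho=4\sqrt{2}\pi\Gamma(3/2)\,T^{3/2}\mathcal{F}_{3/2}(\f)$, $\E=4\sqrt{2}\pi\Gamma(3/2)\,T^{5/2}\mathcal{F}_{5/2}(\f)$, the recursion $\mathcal{F}_p'=\mathcal{F}_{p-1}$ via integration by parts, the strict Cauchy--Schwarz inequality in $L^2(w\,\d r)$ giving $\tfrac{3}{5}\mathcal{F}_{3/2}^2<\mathcal{F}_{1/2}\mathcal{F}_{5/2}$, and the two asymptotic regimes $\mathcal{F}_p(\f)\sim e^{\f}$ as $\f\to-\infty$ and $\mathcal{F}_p(\f)\sim\f^p/\Gamma(p+1)$ as $\f\to+\infty$ all check out, including the constant bookkeeping yielding the stated $J$.

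Worth flagging: your Cauchy--Schwarz step establishes $\varphi'(\f)>0$ for every $\f\in\R$, not merely that $\varphi$ is strictly monotone. Combined with $\varphi\in C^\infty$, the inverse function theorem then gives $\phi=\varphi^{-1}\in C^\infty((0,J))$, strictly stronger than the $C^1$ regularity the proposition asserts. This is precisely the content of the paper's technical hypothesis \eqref{Hypo}, which the authors state without proof and say they ``firmly believe'' but ``leave elsewhere.'' Your argument would dispose of \eqref{Hypo} entirely. Interestingly, the paper already uses essentially the same Cauchy--Schwarz device (with the moments $p_0^1,p_2^1,p_4^1$) inside the proof of Proposition \ref{Prop-fT-Es} to get the sign $(p_2^1)^2-p_4^1p_0^1<0$ in the $\nabla_x T$ formula, so the tool is at hand; it simply is not turned back onto $\varphi'$ in the text.
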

\begin{remark}
  Noting that the function $\phi$ with argument $\frac{\rho}{\E^{3/5}}$ is $C^1$ smooth at least. We have to give an assumption:
  \begin{equation}\label{Hypo}
    \phi(\cdot) \text{ is sufficiently smooth,} \tag{H}
  \end{equation}
  without proof for technical reason. The discussion of the hypothesis \eqref{Hypo} will be left to another paper.
\end{remark}
\begin{remark}\label{Rmk-Relation-rhoEfT}
  Under the hypothesis \eqref{Hypo}, the significance of proposition above is that: for any integer $k\geqslant 1$, given $(\rho,\,\E)\in C^k([0,\widetilde{\tau}\,]\times\R^3)$ with $0< \frac{\rho}{\E^{3/5}} < J$ for any $(t,x)\in [0,\widetilde{\tau}\,]\times\R^3$, we can determine the unique functions $\f,\,T \in C^k([0,\widetilde{\tau}\,]\times\R^3)$ by relations \eqref{f} and \eqref{T}. Therefore, we acquire the relation \eqref{Relation-rhoEfT}.
\end{remark}

\subsection{The Estimates for Fluid Part}
We would like to consider the compressible Euler system \eqref{Com-Euler}, of which the non-conservative form is
\begin{equation}\label{Com-Euler-NonConser}
  \left\{
  \begin{aligned}
  &\partial_t \rho+\u\cdot\nabla_x\rho+\rho\nabla_x\cdot\u=0,\\
  &\partial_t\u+\u\cdot\nabla_x\u+\frac{\nabla_x\E}{\rho}=0,\\
  &\partial_t \E+\u\cdot\nabla_x \E+\frac{5}{3}\E\nabla_x\cdot\u=0.
  \end{aligned}\right.
\end{equation}
It is the solution $(\rho, \u, \E)$ excluding the vacuum and the non positive temperature that we are looking for. To this end, we consider the solution near the constant state $(\overline{\rho}, \overline{\u}, \overline{\E})$ of \eqref{Com-Euler-NonConser}. More precisely, let
\begin{equation}\label{Cons-State}
  \left\{
  \begin{aligned}
  \overline{\rho}=&\int_{\R^3}\frac{1}{1+e^{\frac{|v|^2}{2}-1}}\d v =4\pi\int_{0}^{+\infty}\frac{r^2}{1+e^{\frac{r^2}{2}-1}} \d r \approx 4\pi\times 1.97477\\
  \overline{\u}=&0\\
  \overline{\E}=&\frac{1}{3}\int_{\R^3}\frac{|v|^2}{1+e^{\frac{|v|^2}{2}-1}}\d v =\frac{4\pi}{3}\int_{0}^{+\infty}\frac{r^4}{1+e^{\frac{r^2}{2}-1}} \d r \approx 4\pi\times 2.509458,
  \end{aligned}
  \right.
\end{equation}
be the constant solution to \eqref{Com-Euler-NonConser}. Going a step further, we can formulate the system \eqref{Com-Euler-NonConser} as
\begin{equation}\label{Symm-form}
  A_0\partial_t U + \sum_{j=1}^{3}A_j\partial_j U = 0,
\end{equation}
where
\begin{equation}\label{A-0}
  U=\left(\begin{array}{c}\E\\ \u \\ \rho\end{array}\right),\quad
  A_0=\left(
  \begin{array}{ccccc}
    \frac{6}{5} & 0 & 0 & 0 & -\frac{\E}{\rho} \\
    0 & \rho\E & 0 & 0 & 0 \\
    0 & 0 & \rho\E & 0 & 0 \\
    0 & 0 & 0 & \rho\E & 0 \\
    -\frac{\E}{\rho} & 0 & 0 & 0 & \frac{5\E^2}{3\rho^2}
  \end{array}
  \right), \quad
  A_1=\left(
  \begin{array}{ccccc}
    \frac{6}{5}\u_1 & \E & 0 & 0 & -\frac{\E}{\rho}\u_1 \\
    \E & \rho\E\u_1 & 0 & 0 & 0 \\
    0 & 0 & \rho\E\u_1 & 0 & 0 \\
    0 & 0 & 0 & \rho\E\u_1 & 0 \\
    -\frac{\E}{\rho}\u_1 & 0 & 0 & 0 & \frac{5\E^2}{3\rho^2}\u_1
  \end{array}
  \right),
\end{equation}
and
\begin{equation*}
  A_2=\left(
  \begin{array}{ccccc}
    \frac{6}{5}\u_2 & 0 & \E & 0 & -\frac{\E}{\rho}\u_2 \\
    0 & \rho\E\u_2 & 0 & 0 & 0 \\
    \E & 0 & \rho\E\u_2 & 0 & 0 \\
    0 & 0 & 0 & \rho\E\u_2 & 0 \\
    -\frac{\E}{\rho}\u_2 & 0 & 0 & 0 & \frac{5\E^2}{3\rho^2}\u_2
  \end{array}
  \right), \qquad
  A_3=\left(
  \begin{array}{ccccc}
    \frac{6}{5}\u_3 & 0 & 0 & \E & -\frac{\E}{\rho}\u_3 \\
    0 & \rho\E\u_3 & 0 & 0 & 0 \\
    0 & 0 & \rho\E\u_3 & 0 & 0 \\
    \E & 0 & 0 & \rho\E\u_3 & 0 \\
    -\frac{\E}{\rho}\u_3 & 0 & 0 & 0 & \frac{5\E^2}{3\rho^2}\u_3
  \end{array}
  \right).
\end{equation*}
We point out that the matrix $A_0$ is positive definite with five real eigenvalues:
\begin{equation*}
     \lambda_1=\lambda_2=\lambda_3=\rho\E,\;\lambda_{4,5}=\frac{3}{5}+\frac{5}{6}\frac{\E^2}{\rho^2} \pm \sqrt{\frac{9}{25}+\frac{25}{36}\frac{\E^4}{\rho^4}}.
   \end{equation*}

Thanks to the Friedrichs’ existence theory of the linear hyperbolic system (c.f. Chapter 2 in \cite{Majda-1984Book}), we can establish that
\begin{theorem}\label{Thm-rhouE-Existence}
  Let $s_0 \geqslant 3$ be an integer, $(\overline{\rho}, \overline{\u}, \overline{\E})$ be as \eqref{Cons-State} and the bound $J$ be as in Proposition \ref{Prop-Relation-rhoEfT}. Assume that the initial data
  \begin{equation}\label{rho-u-E-pertur}
    (\rho,\u,\E)(0,x)=(\rho^\i,\u^\i,\E^\i)(x)=(\widetilde{\rho}^\i,\widetilde{\u}^\i,\widetilde{\E}^\i)(x)+(\overline{\rho}, \overline{\u}, \overline{\E}),\quad (\widetilde{\rho}^\i,\widetilde{\u}^\i,\widetilde{\E}^\i)(x) \in H^{s_0}(\R^3),
  \end{equation}
  satisfies
  \begin{equation}\label{In-bd}
    0<(1-\gamma_0)\overline{\rho} \leqslant \widetilde{\rho}^\i+\overline{\rho} \leqslant (1+\gamma_0)\overline{\rho},\quad 0<(1-\gamma_0)\overline{\E} \leqslant \widetilde{\E}^\i+\overline{\E} \leqslant (1+\gamma_0)\overline{\E},
  \end{equation}
  where $\gamma_0 <1$ small enough is a positive constant such that
  \begin{equation*}
    \frac{1+\gamma_0}{(1-\gamma_0)^{3/5}}\frac{\overline{\rho}}{\overline{\E}^{3/5}}<J.
  \end{equation*}
  Then, there is a finite time $\tau\in (0,+\infty)$ such that, the system \eqref{Symm-form}-\eqref{In-bd} admit a unique smooth solution $(\rho,\u,\E)\in C^1([0,\tau\,]\times\R^3)$. Furthermore,
  \begin{equation*}
    (\rho,\u,\E)(t,x)-(\overline{\rho}, \overline{\u}, \overline{\E}) \in C\left([0,\tau\,];\,H^{s_0}(\R^3)\right)\cap C^1\left([0,\tau\,];\,H^{s_0-1}(\R^3)\right)
  \end{equation*}
  and
  \begin{equation}\label{rho-u-E-Es}
    \begin{array}{c}
      \|(\rho,\u,\E)(t,x)-(\overline{\rho}, \overline{\u}, \overline{\E})\|_{C\left([0,\tau\,];\,H^{s_0}(\R^3)\right)\cap C^1\left([0,\tau\,];\,H^{s_0-1}(\R^3)\right)}\leqslant C_1,\\
      \phantom{h}\\
      (1-\gamma_0)\overline{\rho} \leqslant \rho \leqslant (1+\gamma_0)\overline{\rho},\quad (1-\gamma_0)\overline{\E} \leqslant \E \leqslant (1+\gamma_0)\overline{\E}.
    \end{array}
  \end{equation}
  where $C_1$ is depending only on $\|(\widetilde{\rho}^\i,\widetilde{\u}^\i,\widetilde{\E}^\i)\|_{H^{s_0}(\R^3)}$.
\end{theorem}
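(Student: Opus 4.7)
The plan is to invoke the classical Friedrichs--Kato--Majda local existence theory for quasilinear symmetric hyperbolic systems, as presented in Chapter 2 of \cite{Majda-1984Book}. The first task is to verify that \eqref{Symm-form} fits into this framework on the open set $\{\rho>0,\,\E>0,\,\rho/\E^{3/5}<J\}$: each $A_j(U)$ is symmetric by direct inspection, $A_0(U)$ is positive definite (the only nontrivial off-diagonal coupling is the $2\times 2$ block in the $(\E,\rho)$ variables, whose determinant $\tfrac{6}{5}\cdot\tfrac{5\E^{2}}{3\rho^{2}}-(\E/\rho)^{2}=\E^{2}/\rho^{2}$ is strictly positive), and all entries depend smoothly on $U$. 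The five explicit eigenvalues quoted after \eqref{A-0} provide quantitative spectral bounds on any closed subset bounded away from the boundary of this admissible region.

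Second, I would pass to the perturbation $\widetilde U:=U-\bar U$ with $\bar U=(\bar\E,0,\bar\rho)^{T}$ and recast the system as $A_{0}(\bar U+\widetilde U)\partial_{t}\widetilde U+\sum_{j}A_{j}(\bar U+\widetilde U)\partial_{j}\widetilde U=0$ with $\widetilde U(0,\cdot)=(\widetilde\E^{\i},\widetilde{\u}^{\i},\widetilde\rho^{\i})\in H^{s_{0}}(\R^{3})$. Since $s_{0}\geqslant 3>3/2$, the Sobolev embedding $H^{s_{0}}\hookrightarrow L^{\infty}$ applies, and the hypothesis \eqref{In-bd} together with the explicit choice of $\gamma_{0}$ ensures the initial state lies in a closed ball inside the admissible set on which $\rho,\E$ are uniformly bounded away from zero and $\rho/\E^{3/5}$ is uniformly bounded away from $J$.

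The heart of the argument is a uniform $H^{s_{0}}$ energy estimate. For each multi-index $\alpha$ with $|\alpha|\leqslant s_{0}$, applying $\partial^{\alpha}$ to the equation and pairing against $\partial^{\alpha}\widetilde U$ weighted by $A_{0}(U)$ produces, after symmetric integration by parts, a differential identity whose right-hand side is the sum of (i) zeroth-order terms controlled by $\|\partial_{t}A_{0},\nabla_{x}A_{j}\|_{L^{\infty}}$ and hence, using the equation itself to trade $\partial_{t}$ for $\nabla_{x}$, by a continuous function of $\|\widetilde U\|_{H^{s_{0}}}$, and (ii) commutator terms $[\partial^{\alpha},A_{j}(U)]\partial_{j}\widetilde U$ handled via the Moser--Kato--Ponce calculus in $H^{s_{0}}$. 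Combined with the positive lower bound on $A_{0}$, this yields the standard inequality $\tfrac{d}{dt}\|\widetilde U\|_{H^{s_{0}}}^{2}\leqslant \Lambda(\|\widetilde U\|_{H^{s_{0}}})\|\widetilde U\|_{H^{s_{0}}}^{2}$ with $\Lambda$ continuous increasing; a bootstrap argument then furnishes a local time $\tau>0$ depending only on $\|(\widetilde\rho^{\i},\widetilde{\u}^{\i},\widetilde\E^{\i})\|_{H^{s_{0}}}$ on which $\|\widetilde U(t)\|_{H^{s_{0}}}$ remains bounded by the claimed constant $C_{1}$.

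Existence of the solution itself is then obtained by a Friedrichs symmetric mollifier iteration on the associated linearized problem followed by a weak--strong compactness argument, while uniqueness and continuity in time with values in $H^{s_{0}}$ follow from an $L^{2}$-energy estimate on the difference of two solutions; both steps are classical and I would quote them from \cite{Majda-1984Book}. The pointwise bounds $(1-\gamma_{0})\bar\rho\leqslant\rho\leqslant(1+\gamma_{0})\bar\rho$ and its $\E$-analogue are enforced by possibly shrinking $\tau$ further, since $\|\widetilde U\|_{L^{\infty}}$ is continuous in time and as small as one wishes at $t=0$. The one place that genuinely requires care --- and the main obstacle specific to this setting --- is ensuring $\rho/\E^{3/5}$ stays strictly below the threshold $J$ throughout $[0,\tau]$, so that Proposition \ref{Prop-Relation-rhoEfT} applies and $(\f,T)$ inherit the smoothness of $(\rho,\E)$; this is precisely the role of the quantitative condition $\tfrac{1+\gamma_{0}}{(1-\gamma_{0})^{3/5}}\bar\rho/\bar\E^{3/5}<J$ imposed on $\gamma_{0}$ in the hypotheses.
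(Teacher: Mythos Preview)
Your proposal is correct and follows exactly the route the paper takes: the paper itself provides no detailed proof of this theorem, simply prefacing it with ``Thanks to the Friedrichs' existence theory of the linear hyperbolic system (c.f.\ Chapter 2 in \cite{Majda-1984Book}), we can establish that\ldots'' and relying on the symmetric hyperbolic structure \eqref{Symm-form}--\eqref{A-0} together with the explicit positive-definiteness of $A_0$ recorded immediately after \eqref{A-0}. One small slip: in your final paragraph you write that $\|\widetilde U\|_{L^\infty}$ is ``as small as one wishes at $t=0$,'' which is not true---the initial perturbation is fixed and may saturate the $\gamma_0$-bounds; the correct argument is that the bounds hold at $t=0$ by hypothesis \eqref{In-bd} and persist on a short interval by continuity (strictly speaking one should allow an arbitrarily small enlargement of $\gamma_0$ or assume strict inequality initially, a standard technicality the paper also glosses over).
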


\begin{remark}
  It should be declared that the solution $(\rho,\u,\E)$ to the system \eqref{Symm-form}-\eqref{In-bd} constructed above satisfies
  \begin{equation*}
    \frac{\E}{\rho}\geqslant \frac{(1-\gamma_0)}{(1+\gamma_0)}\frac{\overline{\E}}{\overline{\rho}} > \frac{(1-\gamma_0)}{(1+\gamma_0)},
  \end{equation*}
  which guarantees that there is a constant $\widetilde{\lambda}>0$ such that the symmetrizer $A_0$ of the hyperbolic system \eqref{Symm-form} satisfies $A_0>\widetilde{\lambda} I$ for any $(t,x)\in [0,\tau\,]\times\R^3$. One can take $\gamma_0=\frac{1}{100}$ for the sake of intuitiveness.
\end{remark}

According to Remark \ref{Rmk-Relation-rhoEfT}, we can determine $\f,\,T\in  C^1([0,\tau\,]\times\R^3)$ by relations \eqref{f} and \eqref{T}. Furthermore, we can conclude that
\begin{proposition}\label{Prop-fT-Es}
  We have the following assertions of $\f$, $T$ and $\nabla_x T$:
  \begin{itemize}
    \item[(1)] There are some constants $\f_m$ and $\f_M$ such that for any $(t,x)\in [0,\tau\,]\times \R^3$
    \begin{equation}\label{f-bd}
      \f_m \leqslant \f(t,x) \leqslant \f_M;
    \end{equation}
    \item[(2)] There are some constants $T_m>0$ such that for any $(t,x)\in [0,\tau\,]\times \R^3$
    \begin{equation}\label{T-bd}
      T_m < \min_{(t,x)\in[0,\tau]\times\R^3} T(t,x)\leqslant \max_{(t,x)\in[0,\tau]\times\R^3} T(t,x)< 2T_m;
    \end{equation}
    \item[(3)] It holds that for some constant $C>0$
    \begin{equation}\label{T-bd-1}
      \|\nabla_x T\|_{L^\infty([0,\tau\,];L^2(\R^3))}\leqslant C,\quad \|\nabla_x T\|_{L^\infty([0,\tau\,];L^\infty(\R^3))}\leqslant C.
    \end{equation}
  \end{itemize}
\end{proposition}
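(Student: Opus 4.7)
The plan is to extract all three assertions from the representation of $\mathfrak{f}$ and $T$ supplied by Proposition \ref{Prop-Relation-rhoEfT}, combined with the uniform bounds and Sobolev regularity on $(\rho,\mathcal{E})$ guaranteed by Theorem \ref{Thm-rhouE-Existence}. The whole argument amounts to chasing composition with smooth functions through the chain rule, so the intuitive content is modest; the care required is in verifying that the arguments of the scalar functions $\phi$ and $\psi$ never leave the open domains on which they are smooth.

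For assertion (1), first I would show that $\rho/\mathcal{E}^{3/5}$ is confined to a compact subinterval of $(0,J)$. By \eqref{rho-u-E-Es}, $(1-\gamma_0)\overline{\rho}\le \rho\le (1+\gamma_0)\overline{\rho}$ and analogously for $\mathcal{E}$, so
\begin{equation*}
\frac{1-\gamma_0}{(1+\gamma_0)^{3/5}}\frac{\overline{\rho}}{\overline{\mathcal{E}}^{3/5}} \le \frac{\rho}{\mathcal{E}^{3/5}}\le \frac{1+\gamma_0}{(1-\gamma_0)^{3/5}}\frac{\overline{\rho}}{\overline{\mathcal{E}}^{3/5}} < J,
\end{equation*}
the last inequality being the smallness assumption on $\gamma_0$ in Theorem \ref{Thm-rhouE-Existence}. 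Continuity of $\phi$ on $(0,J)$ from \eqref{f} then produces $\mathfrak{f}_m\le \mathfrak{f}\le \mathfrak{f}_M$.

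For assertion (2), I would plug the bound from (1) into \eqref{T}. The Fermi integral $\mathcal{F}_{3/2}(\mathfrak{f})$ is a smooth, strictly positive function of $\mathfrak{f}$, hence uniformly bounded above and below on $[\mathfrak{f}_m,\mathfrak{f}_M]$; combined with the two-sided bound on $\rho$, this yields constants $0<T_* \le T \le T^*$. To obtain the quantitative form $T_m<T<2T_m$, note that at the constant state the formula \eqref{T} returns $\overline{T}=1$, and the map $(\rho,\mathcal{E})\mapsto T$ is continuous at $(\overline{\rho},\overline{\mathcal{E}})$, so by shrinking $\gamma_0$ in Theorem \ref{Thm-rhouE-Existence} if necessary we can guarantee $|T-1|<\tfrac{1}{4}$, after which any choice $T_m\in(\tfrac{5}{8},\tfrac{3}{4})$ works.

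For assertion (3), the chain rule gives
\begin{equation*}
\nabla_x T = \partial_\rho\psi(\rho,\mathfrak{f})\,\nabla_x\rho + \partial_\mathfrak{f}\psi(\rho,\mathfrak{f})\,\phi'\!\left(\tfrac{\rho}{\mathcal{E}^{3/5}}\right)\left(\tfrac{\nabla_x\rho}{\mathcal{E}^{3/5}}-\tfrac{3\rho\,\nabla_x\mathcal{E}}{5\mathcal{E}^{8/5}}\right).
\end{equation*}
Under the hypothesis \eqref{Hypo}, $\phi'$ is continuous, hence bounded on the compact range of $\rho/\mathcal{E}^{3/5}$ identified above; the partials of $\psi$ are likewise bounded because $\rho$ stays away from $0$ and $\mathcal{F}_{3/2}(\mathfrak{f})$ is bounded away from $0$. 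Thus $\nabla_x T$ is controlled pointwise in $(t,x)$ by $|\nabla_x\rho|+|\nabla_x\mathcal{E}|$. The $L^\infty_t L^2_x$ bound then follows from \eqref{rho-u-E-Es}, and the $L^\infty_t L^\infty_x$ bound follows from the Sobolev embedding $H^{s_0-1}(\R^3)\hookrightarrow L^\infty(\R^3)$, which is valid since $s_0\ge 3$ implies $s_0-1\ge 2>3/2$.

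The only place where something could go wrong is assertion (3), where I implicitly need $\phi\in C^1$ in order to differentiate $\mathfrak{f}$. Proposition \ref{Prop-Relation-rhoEfT} gives exactly $\phi\in C^1$, which is borderline sufficient here; this is precisely the reason the authors invoke the stronger smoothness hypothesis \eqref{Hypo}, foreshadowing later arguments (e.g.\ for the linear hyperbolic system \eqref{rho-u-theta-n}) that require higher derivatives of $\phi$. Within the present proposition, all manipulations remain elementary once the compactness of the range of $\rho/\mathcal{E}^{3/5}$ inside $(0,J)$ is secured.
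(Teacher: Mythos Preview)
Your argument is correct and, for parts (1) and (2), essentially identical to the paper's. For part (3) you take a slightly different route: you differentiate the \emph{inverse} maps $\mathfrak{f}=\phi(\rho/\mathcal{E}^{3/5})$ and $T=\psi(\rho,\mathfrak{f})$ directly and bound the resulting coefficients on compact ranges, whereas the paper differentiates the \emph{forward} relations $\rho=T^{3/2}p_0^0(\mathfrak{f})$, $\mathcal{E}=\tfrac{1}{3}T^{5/2}p_2^0(\mathfrak{f})$ to obtain a $2\times 2$ linear system in $(\nabla_x\mathfrak{f},\nabla_x T)$ and then inverts it, checking nondegeneracy of the Jacobian via the Cauchy--Schwarz identity $3p_0^0p_2^1-5p_2^0p_0^1=(p_2^1)^2-p_4^1p_0^1<0$. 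Both approaches need only $\phi\in C^1$ (already granted by Proposition~\ref{Prop-Relation-rhoEfT}, so your appeal to \eqref{Hypo} here is not strictly necessary, as you yourself note); your route is a bit more direct, while the paper's has the minor advantage of making the invertibility of the map $(\mathfrak{f},T)\mapsto(\rho,\mathcal{E})$ explicit without referencing $\phi'$ at all.
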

\begin{proof}
  The Proposition \ref{Prop-Relation-rhoEfT} gives us that
  \begin{equation*}
    \frac{\rho}{\E^{3/5}}=\varphi\left(\f\right)\searrow 0,\quad  \text{as}\quad \f\to -\infty,\qquad \frac{\rho}{\E^{3/5}}=\varphi\left(\f\right)\nearrow J,\quad  \text{as}\quad \f\to +\infty.
  \end{equation*}
  Thus, the statements $(1)$ and $(2)$ follow respectively by
  \begin{equation*}
    \phi(0)<\phi\left(\frac{(1-\gamma_0)}{(1+\gamma_0)^{3/5}}\frac{\overline{\rho}}{\overline{\E}^{3/5}}\approx 3.07961\right)=\f_m \leqslant \f(t,x) \leqslant \f_M= \phi\left(\frac{(1+\gamma_0)}{(1-\gamma_0)^{3/5}}\frac{\overline{\rho}}{\overline{\E}^{3/5}}\approx  3.17975\right)<\phi(J),
  \end{equation*}
  and
  \begin{equation*}
    0<\left(\frac{(1-\gamma_0)\overline{\rho}}{4\sqrt{2}\pi\Gamma(3/2)\mathcal{F}_{3/2}(\f_M)}\right)^{2/3} =T_m \leqslant  T(t,x) \leqslant \left(\frac{(1+\gamma_0)\overline{\rho}}{4\sqrt{2}\pi\Gamma(3/2)\mathcal{F}_{3/2}(\f_m)}\right)^{2/3}<2T_m.
  \end{equation*}
  \begin{equation*}
    \Leftrightarrow\int_{0}^{+\infty}\frac{r^{1/2}}{1+e^{r-\f_M}}\d r\leqslant \frac{\sqrt{8}(1-\gamma_0)}{1+\gamma_0}\int_{0}^{+\infty}\frac{r^{1/2}}{1+e^{r-\f_m}}\d r.
  \end{equation*}
  The only non trivial point is the assertion $(3)$. Computing directly gives us
  \begin{equation*}
    \nabla_x \rho=(T^{3/2} p_0^1)\nabla_x \f+3\rho\times\frac{\nabla_x T}{2T},\quad \nabla_x \E = \left(\frac{T^{5/2}}{3} p_2^1\right)\nabla_x \f+5\E\times\frac{\nabla_x T}{2T}.
  \end{equation*}
  Here $p_0^1(t,x)$ and $p_2^1(t,x)$ are as in \eqref{ints} and satisfy
  \begin{equation*}
    \begin{aligned}
    &\int_{\R^3}\frac{e^{\f_m}}{(1+e^{\f_m})(1+e^{\f_M})}e^{-\frac{|v|^2}{2}} \d v \leqslant p_0^1(t,x) \leqslant \int_{\R^3} e^{\f_M}e^{-\frac{|v|^2}{2}} \d v ,\\
    &\int_{\R^3}\frac{e^{\f_m}}{(1+e^{\f_m})(1+e^{\f_M})}|v|^2 e^{-\frac{|v|^2}{2}} \d v \leqslant p_2^1(t,x) \leqslant \int_{\R^3} e^{\f_M}|v|^2 e^{-\frac{|v|^2}{2}} \d v.
    \end{aligned}
  \end{equation*}
  It follows immediately that
  \begin{equation*}
    \nabla_x T= \frac{2T}{3 p_2^1\rho T -15p_0^1 \E}(p_2^1 T \nabla_x\rho-3 p_0^1 \nabla_x\E),
  \end{equation*}
  where the notations appearing here is given by \eqref{ints} and
  \begin{equation*}
    3 p_2^1\rho T -15p_0^1 \E = T^{5/2}(\,3p_0^0p_2^1-5p_2^0p_0^1\,)=T^{5/2}(\,(p_2^1)^2-p_4^1p_0^1\,)<0,
  \end{equation*}
  the last step is due to the Cauchy-Schwarz's inequality. Therefore, the estimate \eqref{rho-u-E-Es} implies for some constant $C>0$
  \begin{equation*}
    \|\nabla_x T\|_{L^\infty([0,\tau\,];L^2(\R^3))}\leqslant C,\quad \|\nabla_x T\|_{L^\infty([0,\tau\,];L^\infty(\R^3))}\leqslant C.
  \end{equation*}
\end{proof}

Now we turn to prove the local-in-time existence of the hyperbolic system \eqref{rho-u-theta-n} for $n=1,2$, i.e.:
\begin{equation*}
  \left\{
  \begin{aligned}
  &\partial_t\rho_n+\nabla_x\cdot(\rho u_n+\rho_n \u)=0\\
  &\rho(\partial_t u_n+\u\cdot\nabla_x u_n+ u_n\cdot\nabla_x\u)\\
  &\hspace{3cm}-\frac{\nabla_x\E}{\rho}\rho_n+\nabla_x\left(\frac{\rho T}{\varrho}\rho_n+\frac{\theta_n}{6T}\left(5\E - \frac{3\rho^2 T}{\varrho}\right)\right)=\mathcal{F}_u^\bot(F_n)\\
  &\left(\frac{5\E}{2T} - \frac{3\rho^2 }{2\varrho}\right)\Big(\partial_t\theta_n+\u\cdot\nabla_x \theta_n+\frac{2}{3}(\theta_n\nabla_x\cdot\u+3T\nabla_x\cdot u_n)\\
  &\hspace{9cm}+3u_n\cdot\nabla_x T\Big) =\mathcal{F}_\theta^\bot(F_n).
  \end{aligned}
  \right.
\end{equation*}
Firstly, we need to introduce some notations for the sake of simplicity. We define for the multi-index $\beta=(\beta_0,\beta_1,\beta_2,\beta_3)$ that
\begin{equation*}
  \partial^\beta = \partial_t^{\beta_0}\partial_{x_1}^{\beta_1}\partial_{x_2}^{\beta_2}\partial_{x_3}^{\beta_3},
\end{equation*}
and define the functional space $\H^k$ by
\begin{equation*}
  \H^k = \left\{\,f(t,x) \in L^2(\d t\d x)\,: \, \sum_{|\beta|\leqslant k}\left\|\partial^\beta f(t)\right\|_{L^2(\d x)} <+\infty\right\}.
\end{equation*}
The functional space above is endowed with norm $\left\| f(t)\right\|_{\H^k} = \sum_{|\beta|\leqslant k}\left\|\partial^\beta f(t)\right\|_{L^2(\d x)}$. Then we are going to establish that:
\begin{theorem}\label{Thm-rho-u-theta-n}
  Let $s_1\geqslant 3$ be an integer and $s_0 \geqslant s_1+2$. Let $(\rho,\u,\E)$ be the bounded smooth solution of compressible Euler equations \eqref{Com-Euler-NonConser} constructed in Theorem \ref{Thm-rhouE-Existence} over $[0,\tau\,]$. As a consequence, $\f$ and $T$ determined by \eqref{f} and \eqref{T} are at least $C^{s_1}$ smooth under the assumption \eqref{Hypo}. For the system \eqref{rho-u-theta-n}-\eqref{In-data-n} with $n=1,2$, we assume that
  \begin{equation*}
    \mathbb{E}_{n,s_1}=\|(\rho_n^\i,u_n^\i,\theta_n^\i)\|_{\H^{s_1}}^2+ \sup_{t\in(0,\tau)}\|(\mathcal{F}_u^\bot(F_n),\mathcal{F}_\theta^\bot(F_n))(t,\cdot)\|_{\H^{s_1+2}}^2< +\infty.
  \end{equation*}
  Then there exists a unique smooth solution $(\rho_n,u_n,\theta_n)$ to \eqref{rho-u-theta-n}-\eqref{In-data-n} over $t\in [0,\tau\,]$ for each $n=1,2$ such that
  \begin{equation}\label{rho-u-theta-n-Es}
    \sup_{t\in(0,\tau)}\|(\rho_n,u_n,\theta_n)\|_{\H^{s_1}}^2\leqslant C(\tau,\mathbf{E}_{0,s_1+2})\mathbb{E}_{n,s_1},
  \end{equation}
  where $\dis\mathbf{E}_{0,s_1+2}=\sup_{t\in(0,\tau)}\|(\rho-\overline{\rho},\u,\E-\overline{\E})\|_{H^{s_1+2}}$.
\end{theorem}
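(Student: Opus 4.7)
The plan is to recast \eqref{rho-u-theta-n} as a linear symmetric hyperbolic system with variable coefficients and invoke Friedrichs' linear theory from Chapter~2 of \cite{Majda-1984Book}. Under the hypothesis \eqref{Hypo} together with $s_0\geqslant s_1+2$, Theorem~\ref{Thm-rhouE-Existence} and Proposition~\ref{Prop-fT-Es} deliver $(\rho,\u,\E,\f,T,\varrho)$ in $C([0,\tau];H^{s_1+2})\hookrightarrow C([0,\tau];C^{s_1}_b(\R^3))$ with uniform positive lower and upper bounds, which is exactly the regularity needed for the symmetrizer construction and for the Moser estimates used below.

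Writing $U:=(\rho_n,u_n^1,u_n^2,u_n^3,\theta_n)^\top$, the principal part of \eqref{rho-u-theta-n} has the form $M_0\partial_t U+\sum_{j=1}^{3} M_j\partial_j U+\text{l.o.t.}=\mathcal{F}$, where $M_0=\mathrm{diag}(1,\rho\,\mathrm{Id}_3,c)$ with $c:=\tfrac{5\E}{2T}-\tfrac{3\rho^2}{2\varrho}$, and the non-symmetric cross-couplings between $\rho_n,u_n,\theta_n$ involve the scalars $\rho$, $a:=\rho T/\varrho$, $d:=\tfrac{1}{6T}(5\E-\tfrac{3\rho^2 T}{\varrho})$ and $2cT$. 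I would then look for a block-diagonal symmetrizer $S=\mathrm{diag}(\beta_1,\beta_2\,\mathrm{Id}_3,\beta_3)$ satisfying the symmetry conditions $\beta_1\rho=\beta_2 a$ and $\beta_2 d=2cT\beta_3$. Using the algebraic identity $d=c/3$, which is immediate from the definitions of $c$ and $d$, the choice $S=\mathrm{diag}(T/\varrho,\,\mathrm{Id}_3,\,1/(6T))$ makes every $SM_j$ symmetric and gives $SM_0=\mathrm{diag}(T/\varrho,\,\rho\,\mathrm{Id}_3,\,c/(6T))$. Uniform positive-definiteness of $SM_0$ on $[0,\tau]\times\R^3$ thus reduces to $c>0$, which in the language of \eqref{ints} is the strict inequality $5 p_0^1 p_2^0>9(p_0^0)^2$ throughout the admissible range \eqref{f-bd} of $\f$; this is a Fermi--Dirac moment inequality in the same spirit as $(p_2^1)^2<p_0^1 p_4^1$ already exploited in the proof of Proposition~\ref{Prop-fT-Es}, and verifying it is the structural hypothesis that makes \eqref{rho-u-theta-n} strictly hyperbolic.

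With the symmetrizer in hand, Friedrichs' theorem produces a unique $U\in C([0,\tau];H^{s_1})\cap C^1([0,\tau];H^{s_1-1})$; the full mixed space-time norm $\|U\|_{\H^{s_1}}$ is then recovered by trading each time derivative for spatial ones through the equation itself. For the quantitative bound \eqref{rho-u-theta-n-Es}, I would apply $\partial^\beta$ with $|\beta|\leqslant s_1$ to the symmetrized equation, pair with $SM_0\partial^\beta U$ in $L^2_x$, integrate by parts, and control the commutators $[\partial^\beta,SM_j]\partial_j U$ and $[\partial^\beta,\text{l.o.t.}]U$ by standard Moser product estimates in terms of $\|(\rho,\u,\E,\f,T)\|_{H^{s_1+2}}\leqslant C(\mathbf{E}_{0,s_1+2})$. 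The symmetry of the $SM_j$ kills the top-order boundary terms after integration by parts, so Gronwall's inequality on $[0,\tau]$ closes the estimate, with the source $\mathcal{F}\in\H^{s_1+2}$ contributing $\mathbb{E}_{n,s_1}$.

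The main obstacle is the uniform positivity of the symmetrizer $SM_0$: one needs the quantitative moment inequality $5 p_0^1 p_2^0>9(p_0^0)^2$ throughout \eqref{f-bd}, which is a genuine Fermi--Dirac thermodynamic stability condition rather than a formal Cauchy--Schwarz (the usual bound $(p_0^0)^2\leqslant p_0^1\,\int\!\mu/(1-\mu)\,dv$ gives the wrong weight on the right-hand side). A secondary technical point is that every coefficient of $M_j$ and $SM_j$ involves the implicit map $\f=\phi(\rho/\E^{3/5})$ from Proposition~\ref{Prop-Relation-rhoEfT}, so the high-order smoothness of $\phi$ postulated in \eqref{Hypo} is precisely what makes the Moser product estimates at the $H^{s_1+2}$ level closable.
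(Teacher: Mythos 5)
Your proposal is correct, but it takes a genuinely different route from the paper. The paper first performs a change of unknowns $\rho_n \mapsto p_n := \frac{\rho T}{\varrho}\rho_n + \frac{G}{3}\theta_n$ (where $G = \frac{5\E}{2T}-\frac{3\rho^2}{2\varrho}$ is your $c$), using the transport identities $\partial_t G + \nabla\cdot(G\u)=0$ and $(\partial_t+\u\cdot\nabla)H+\tfrac{2}{3}H\nabla\cdot\u=0$ with $H=\rho T/\varrho$ to produce a clean system in $(p_n,u_n,\theta_n)$; it then multiplies on the left by row operations and obtains a symmetric system whose $A_0^n$ has a non-diagonal $2\times 2$ coupling between the $p_n$ and $\theta_n$ slots, with $\det A_0^n$ trivially positive. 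You instead stay with $(\rho_n,u_n,\theta_n)$, exploit the algebraic identity $d=c/3$, and exhibit the \emph{diagonal} symmetrizer $S=\mathrm{diag}(T/\varrho,\,\mathrm{Id}_3,\,1/(6T))$. Both approaches reduce to Friedrichs' theory plus commutator/Gr\"onwall estimates. What your route buys is transparency: positive definiteness of $SM_0$ forces you to name the structural condition $G>0$, which the paper's presentation also tacitly requires (the row operations behind its $A_0^n$, and hence the lower-order matrix $A_4^n$, carry factors of $1/G$, so $G$ bounded away from zero is needed there too) but never states.

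The one place you hedge unnecessarily is in calling $5 p_0^1 p_2^0 > 9(p_0^0)^2$ ``a genuine Fermi--Dirac thermodynamic stability condition rather than a formal Cauchy--Schwarz.'' It \emph{is} Cauchy--Schwarz once one integrates by parts. With $\mu_0=(1+e^{|v|^2/2-\f})^{-1}$ one has $\nabla_v\mu_0=-v\,\mu_0(1-\mu_0)$, so
\begin{equation*}
  0=\int_{\R^3}\nabla_v\cdot\!\big(v\mu_0\big)\,\d v = 3p_0^0-p_2^1,
  \qquad
  0=\int_{\R^3}\nabla_v\cdot\!\big(v|v|^2\mu_0\big)\,\d v = 5p_2^0-p_4^1,
\end{equation*}
whence $5p_0^1p_2^0-9(p_0^0)^2=p_0^1p_4^1-(p_2^1)^2>0$, which is exactly the strict Cauchy--Schwarz inequality the paper already invokes in the proof of Proposition~\ref{Prop-fT-Es} to show $3p_2^1\rho T-15p_0^1\E<0$. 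So the hyperbolicity condition you isolate holds for free on the whole admissible range \eqref{f-bd}, and your proof closes without any new structural input.
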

\begin{proof}
  In what follows, for $n=1,\,2$, the notations
\begin{equation*}
  H \equiv \frac{\rho T}{\varrho},\qquad G \equiv \frac{5\E}{2T} - \frac{3\rho^2 }{2\varrho},\qquad p_n\equiv\frac{\rho T}{\varrho}\rho_n+\frac{G}{3}\theta_n,
\end{equation*}
are used for convenience. Then there holds
\begin{equation*}
  \partial_t G +\nabla\cdot(G\u)=0,\qquad (\partial_t + \u\cdot\nabla)H +\frac{2}{3}H\nabla\cdot\u=0.
\end{equation*}
With the notations and identities above, the hyperbolic system \eqref{rho-u-theta-n} can be written as
\begin{equation*}
  \left\{
  \begin{aligned}
  &\partial_t p_n + \u\cdot \nabla_x p_n + \frac{5}{3}\E\nabla\cdot u_n + \frac{5}{3}(\nabla\cdot\u) p_n + u_n\cdot\nabla\E=\frac{1}{3}\mathcal{F}_\theta^\bot(F_n)\\
  &\rho \partial_t u_n + \rho \u\cdot\nabla u_n +\nabla p_n - \frac{\nabla \E}{\rho^2T/\varrho}p_n + \rho u_n\cdot\nabla \u - \frac{\nabla \E}{3\rho^2T/(G\varrho)}\theta_n = \mathcal{F}_u^\bot(F_n)\\
  & G \partial_t \theta_n +G \u\cdot\nabla \theta_n +2GT\nabla\cdot u_n +3Gu_n\cdot\nabla T + \frac{2}{3}G (\nabla \cdot \u)\theta_n = \mathcal{F}_\theta^\bot(F_n).
  \end{aligned}
  \right.
\end{equation*}
Then a symmetric hyperbolic equation is obtained:
\begin{equation}\label{hyper-Eq-n}
  A_0^n \partial_t U_n + \sum_{j=1}^{3}A_j^n \partial_j U_n + A_4^n U_n = \mathcal{F}_n,
\end{equation}
where
\begin{equation*}
  U_n=\left(\begin{array}{c}p_n \\ u_n \\ \theta_n \end{array}\right),\quad A_0^n=\left(\begin{array}{ccc} \frac{9}{5} & 0 & -\frac{\E}{T} \\ 0 & \rho\E\,\mathbb{I} & 0\\ -\frac{\E}{T} & 0 & \frac{5\E^2}{6T^2} \end{array}\right),\quad A_j^n=\left(\begin{array}{ccc} \frac{9}{5}u_j & \E e_j & -\frac{\E}{T}u_j \\ \E e_j^\mathrm{Tr} & \rho\E u_j\mathbb{I} & 0 \\ -\frac{\E}{T}u_j & 0 & \frac{5\E^2}{6T^2}u_j \end{array}\right)
\end{equation*}
and the matrix $A_4^n$ and column vector $\mathcal{F}_n$ can be written down easily. Here the symbol $\mathbb{I}$ denotes the $3 \times 3$ identity matrix and $e^j,\,j=1,2,3$ the standard (row) base vectors in $\mathbb{R}^3$, the symbol $(\cdot)^\mathrm{Tr}$ means the transpose of row vectors.

Obviously, the matrix $A_0^n$ is positive definite.  For the existence of local-in-time smooth solutions, we refer to the standard Friedrichs' theory (c.f. Chapter 2 in \cite{Majda-1984Book}).

Now we devote ourselves to lifespan of the equations \eqref{rho-u-theta-n} and the estimate \eqref{rho-u-theta-n-Es}. It suffices to establish a priori energy estimates of \eqref{hyper-Eq-n}. Taking $\partial^\beta$ of \eqref{hyper-Eq-n} to get
\begin{equation*}
  \begin{aligned}
  A_0^n \partial_t \partial^\beta U_n + \sum_{j=1}^{3}A_j^n \partial_j \partial^\beta U_n + A_4^n \partial^\beta U_n = & \partial^\beta\mathcal{F}_n - \sum_{|\alpha|\leqslant |\beta|-1}\partial^{\beta-\alpha}A_0^n \partial_t\partial^\alpha U_n\\
  &\qquad- \sum_{j=1}^{3}\sum_{|\alpha|\leqslant |\beta|-1}\partial^{\beta-\alpha}A_j^n \partial_j \partial^\alpha U_n - \sum_{|\alpha|\leqslant |\beta|-1}\partial^{\beta-\alpha}A_4^n \partial^\alpha U_n.
  \end{aligned}
\end{equation*}
Then taking inner product with $\partial^\beta U_n$ in $L^2(\d x)$ and integrating over $[0,t]$, we obtain
\begin{equation*}
  \begin{aligned}
  &\left(A_0^n \partial^\beta U_n(t),\partial^\beta U_n(t)\right)_{L^2(\d x)}-\left(A_0^n \partial^\beta U_n(0),\partial^\beta U_n(0)\right)_{L^2(\d x)}\\
  =&\int_{0}^{t}\left(\left[\partial_t A_0^n+\sum_{j=1}^{3}\partial_j A_j^n-(A_4^n+(A_4^n)^{Tr})\right]\partial^\beta U_n(s),\partial^\beta U_n(s)\right)_{L^2(\d x)}\d s+\int_{0}^{t}2\left(\partial^\beta\mathcal{F}_n,\partial^\beta U_n\right)_{L^2(\d x)}\d s\\
  &- \int_{0}^{t}2\sum_{|\alpha|\leqslant |\beta|-1}\left(\partial^{\beta-\alpha}A_0^n \partial_t\partial^\alpha U_n(s)+\sum_{j=1}^{3}\partial^{\beta-\alpha}A_j^n \partial_j \partial^\alpha U_n(s)+\partial^{\beta-\alpha}A_4^n \partial^\alpha U_n(s),\partial^\beta U_n(s)\right)_{L^2(\d x)}\d s.
  \end{aligned}
\end{equation*}
Since $A_0^n$ is positive definite, summing up over $|\beta|\leqslant s_1$ above, we arrive at
\begin{equation*}
  \left\|U_n(t)\right\|_{\H^{s_1}}^2 \leqslant  \left\|U_n(0)\right\|_{\H^{s_1}}^2 + C(\mathbf{E}_{0,s_1+2}) \int_{0}^{t}\left\|U_n(s)\right\|_{\H^{s_1}}^2 \d s + C\int_{0}^{t}\left\|\mathcal{F}_n(s)\right\|_{\H^{s_1+2}}^2 \d s.
\end{equation*}
A simple Gr\"{o}nwall's in equality implies that
\begin{equation*}
  \left\|U_n(t)\right\|_{\H^{s_1}}^2 \leqslant \left\{\left\|U_n(0)\right\|_{\H^{s_1}}^2+ C\int_{0}^{t}\left\|\mathcal{F}_n(s)\right\|_{\H^{s_1+2}}^2 \d s\right\}\left\{1+C(\mathbf{E}_{0,s_1+2})te^{C(\mathbf{E}_{0,s_1+2})t}\right\},
\end{equation*}
It follows that
\begin{equation*}
  \sup_{t\in(0,\tau)}\left\|U_n(t)\right\|_{\H^{s_1}}^2 \leqslant C(\tau,\mathbf{E}_{0,s_1+2})\mathbb{E}_{n,s_1},
\end{equation*}
which is exactly \eqref{rho-u-theta-n-Es}.
\begin{comment}
and the Newtonian-Leibnitz formula indicates that for any $U\in \H^k$
\begin{equation*}
  \begin{aligned}
  \left\|U(t)\right\|_{\H^k}^2 \leqslant & \left\|U(0)\right\|_{\H^k}^2+ 2\int_{0}^{t}\left\|\partial_t U(s)\right\|_{\H^k}\left\|U(s)\right\|_{\H^k} \d s\\
  \leqslant & \left\|U(0)\right\|_{\H^k}^2+ 2\int_{0}^{t}\left\|U(s)\right\|_{\H^{k+1}} \d s.
  \end{aligned}
\end{equation*}
\end{comment}

\end{proof}

\subsection{The Properties of Linearized Collision Operator $\L$}
Recall the definition \eqref{L-op} of the linearized BFD collision operator $\L$, we can write
\begin{equation*}
  \L f \equiv \nu(\mu) f -\K f= \nu(\mu) f -(\K_2-\K_1) f.
\end{equation*}
Here since
\begin{equation}\label{mu-Es}
  \frac{e^{\f_m}}{1+e^{\f_m}}e^{-\frac{|v-\u|^2}{2T}}\leqslant \mu\leqslant e^{\f_M}e^{-\frac{|v-\u|^2}{2T}}, \qquad \frac{1}{1+e^{\f_M}}\leqslant 1-\mu\leqslant 1,
\end{equation}
the collision frequency $\nu(\mu)$ satisfies
\begin{equation}\label{nu}
  \begin{aligned}
  \nu\equiv\nu(\mu)=&\iint_{\R^3\times \S^2}|(v_*-v)\cdot\omega|\frac{\N}{\mu(1-\mu)}\d \omega \d v_*\\
  \sim & \iint_{\R^3\times\S^2}|(v-v_*)\cdot\omega| e^{-\frac{|v_*-\u|^2}{2T}} \d \omega \d v_*,
  \end{aligned}
\end{equation}
and $\K = \K_2-\K_1$ is given by
\begin{equation}\label{K-1K-2}
  \begin{aligned}
  \K_1 f = & \iint_{\R^3\times\S^2}|(v_*-v)\cdot\omega|\frac{\N}{\sqrt{\mu(1-\mu)}} \left(\frac{f}{\sqrt{\mu(1-\mu)}}\right)_*\d \omega\d v_*,\\
  \K_2 f = &\iint_{\R^3\times\S^2}|(v_*-v)\cdot\omega|\frac{\N}{\sqrt{\mu(1-\mu)}} \left\{\left(\frac{f}{\sqrt{\mu(1-\mu)}}\right)_*^\prime+\left(\frac{f}{\sqrt{\mu(1-\mu)}}\right)^\prime\right\}\d \omega\d v_*\\
  =&2\iint_{\R^3\times\S^2}|(v_*-v)\cdot\omega|\frac{\N}{\sqrt{\mu\mu'}\sqrt{(1-\mu)(1-\mu')}} f'\d \omega\d v_*,
  \end{aligned}
\end{equation}
where the Proposition 2.2 in \cite{Jiang-Xiong-Zhou-2021} is applied to $\K_2$. Then we are able to acquire that:
\begin{proposition}\label{Prop-L-proper}
    For the linearized collision operator $\L$, we conclude that
  \begin{itemize}
    \item[1)] For the collision frequency $\nu$ defined by \eqref{nu}, we have
        \begin{equation}\label{nu-Es}
            C_1(1+|v|)\leqslant \nu \leqslant C_2(1+|v|).
        \end{equation}
    \item[2)] The linearized collision operator $\L$ is positive, symmetric and the null space of $\L$ in $L^2(\d v)$ is
        \begin{equation}\label{null-L}
            Null(\L)=\mathrm{Span}\Big\{1,\,v-\u,\,|v-\u|^2\Big\}\sqrt{\mu(1-\mu)}.
        \end{equation}
        Consequently, there is a constant $\lambda_0>0$ such that
        \begin{equation}\label{coe-L}
            \langle \L f,\,f \rangle_{L_v^2}\geqslant \lambda_0 |\h f|_{\nu(\mu)}^2,\quad \forall\, f \in L_v^2,
        \end{equation}
        where $\P$ is a projection defined by \eqref{P-op}.
    \item[3)] For any $0<q<1$, there is a constant $C>0$ such that
        \begin{equation*}
          \mu^{-q/2}|\K f| \leqslant C|\mu^{-q/2}f|_{L_v^2}.
        \end{equation*}
    \item[4)] Let $0<q<1$ be as in the last statement. For any $f,\,g \in Null^\bot(\L)$ satisfying
    \begin{equation*}
      \L f =g \quad \text{ and }\quad (1+|v|)^2\{\mu^{-q/2} g\}(v) \in L_v^\infty,
    \end{equation*}
    there is a constant $C>0$ such that
  \begin{equation}\label{L-inv-decay-1}
    |f| \leqslant C\left|(1+|v|)^2\mu^{-q/2}g\right|_{L_v^\infty}\mu^{q/2}.
  \end{equation}
  As a consequence of the estimate above, for any $0<q<\widetilde{q}<1$ we can establish:
    \begin{equation}\label{L-inv-decay-2}
        |D_w f| \leqslant C\Big|(D\f,D\u,D T)\Big|\left\{\left|(1+|v|)^2\mu^{-\widetilde{q}/2}D g\right|_{L_v^\infty}+ \left|(1+|v|)^2\mu^{-\widetilde{q}/2}g\right|_{L_v^\infty}\right\}\mu^{q/2},
    \end{equation}
    where $D=\partial_t$ for $D_w= \partial_t$, and $D=\nabla_x$ for $D_w=v\cdot \nabla_x$.
  \end{itemize}
\end{proposition}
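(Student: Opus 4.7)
My plan is to prove the four assertions in sequence, exploiting the strong parallel with the classical Boltzmann linearization while tracking how the Fermi--Dirac factor $\N/[\mu(1-\mu)]$ modifies the usual arguments.

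For (1), I would insert the pointwise bounds \eqref{mu-Es} into the definition \eqref{nu} to sandwich $\nu(\mu)$ between two constant multiples of the classical hard-sphere collision frequency $\iint_{\R^3\times\S^2}|(v-v_*)\cdot\omega|\,e^{-|v_*-\u|^2/(2T)}\,\d\omega\,\d v_*$, and then invoke the standard hard-sphere computation together with the uniform bounds \eqref{f-bd} and \eqref{T-bd} on the parameters to conclude $\nu\sim(1+|v|)$. For (2), symmetry is obtained by the usual collision-invariant change of variables $(v,v_*,\omega)\leftrightarrow(v',v'_*,\omega)$, which also rewrites $\langle\L f,f\rangle_{L_v^2}$ as a nonnegative quadratic form in the symmetric difference $D f:=(f/\sqrt{\mu(1-\mu)})+(f/\sqrt{\mu(1-\mu)})_*-(f/\sqrt{\mu(1-\mu)})'-(f/\sqrt{\mu(1-\mu)})'_*$ weighted by $\N$. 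Vanishing of $D f$ forces $f/\sqrt{\mu(1-\mu)}$ to be a collision invariant, which yields \eqref{null-L}. The coercivity \eqref{coe-L} follows from a Grad-style argument: I would decompose $\K=\K_2-\K_1$ as in \eqref{K-1K-2}, prove that both pieces are Hilbert--Schmidt (and hence compact) on $L^2_\nu(\d v)$ via pointwise Gaussian bounds on their kernels, then run a compactness/contradiction argument on $\mathrm{Null}^\perp(\L)$ to promote pointwise positivity $\langle\L f,f\rangle>0$ to the uniform quantitative bound with constant $\lambda_0>0$ that is independent of $(t,x)$ thanks to \eqref{f-bd}--\eqref{T-bd}.

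For (3), I would use the kernel form of $\K$ derived in \cite{Jiang-Xiong-Zhou-2021} (Proposition 2.2 there, already cited in \eqref{K-1K-2}). Since $\N\leqslant\mu_*'\mu'$ and $\mu(1-\mu)$ is controlled above and below by Gaussian factors through \eqref{mu-Es}, the kernel $k(v,v')$ of $\K$ admits a bound of Grad type with exponential decay in both $v$ and $v'$. Multiplying the representation $\K f(v)=\int k(v,v')f(v')\,\d v'$ by $\mu^{-q/2}$ converts the bound into $\mu^{-q/2}|\K f(v)|\lesssim\int \tilde k(v,v')\,|\mu^{-q/2}f(v')|\,\d v'$ where $\tilde k$ still has enough residual Gaussian decay (because $q<1$); Cauchy--Schwarz in $v'$ then produces the pointwise $L^\infty$--$L^2$ bound.

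For (4), once (3) is in hand I would invert $\L=\nu-\K$ iteratively by rewriting $\L f=g$ as $f=\nu^{-1}g+\nu^{-1}\K f$, substituting once more to get $f=\nu^{-1}g+\nu^{-1}\K\nu^{-1}g+\nu^{-1}\K\nu^{-1}\K f$, and using (1) together with (3) plus a contraction/bootstrap on the residual to extract the Gaussian decay \eqref{L-inv-decay-1}. For \eqref{L-inv-decay-2}, I differentiate the identity $\L f=g$, producing $\L(D_w f)=D_w g-[D_w,\L]f$; the commutator hits $\mu(1-\mu)$ through the parameters $(\f,\u,T)$ and contributes terms of the schematic form $|(D\f,D\u,DT)|$ times operators of the same structure as $\L$ but with a slightly worse Gaussian weight, which is precisely why one needs the margin $\widetilde q>q$. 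Applying \eqref{L-inv-decay-1} with $\widetilde q$ to $D_w f$ and absorbing the loss in the Gaussian then yields \eqref{L-inv-decay-2}.

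The main obstacle I anticipate is the coercivity estimate \eqref{coe-L}: unlike the classical Boltzmann setting, $\N$ is quartic in $\mu$ and does not factor through a single Maxwellian, so the compactness of $\K$ and the orthogonal decomposition onto $\mathrm{Null}(\L)$ have to be verified with care. A secondary difficulty is the commutator analysis in \eqref{L-inv-decay-2}, since $D_w$ acting on $\sqrt{\mu(1-\mu)}$ and on $\N$ produces polynomial factors in $v$ that must be reabsorbed by strengthening the Gaussian weight from $q$ to $\widetilde q$; tracking this loss uniformly in $(t,x)$ is what makes the margin $\widetilde q-q>0$ unavoidable.
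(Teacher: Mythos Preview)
Your treatment of parts (1)--(3) is sound and aligns with the paper: parts (1) and (2) are delegated to \cite{Jiang-Xiong-Zhou-2021}, and part (3) is obtained by deriving Grad-type kernel bounds for $\K_1$ and $\K_2$ separately (the paper carries out the $\K_2$ computation via the standard change of variables $(V,\omega)\mapsto(V_{\p},V_\perp)$ and integrates out the transverse direction).

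For part (4), however, there are two genuine gaps.

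\textbf{First, in \eqref{L-inv-decay-1}.} Your iterative scheme $f=\nu^{-1}g+\nu^{-1}\K\nu^{-1}g+\nu^{-1}\K\nu^{-1}\K f$ does not close: part (3) only converts a weighted pointwise value of $\K f$ into the weighted $L^2_v$-norm of $f$, and further iteration produces no smallness, so there is no contraction to invoke. The paper proceeds differently. From $f=\nu^{-1}(g+\K f)$ and part (3) one gets the pointwise bound $\mu^{-q/2}|f|\leqslant\nu_0^{-1}\big(|\mu^{-q/2}g|+C|\mu^{-q/2}f|_{L^2_v}\big)$. The quantity $|\mu^{-q/2}f|_{L^2_v}$ is then controlled by a \emph{weighted} coercivity estimate (proved separately in the Appendix): for $f\in\mathrm{Null}^\perp(\L)$,
\[
\big\langle\mu^{-q/2}\L f,\ \mu^{-q/2}f\big\rangle_{L^2_v}\;\geqslant\;\tfrac12\,|\mu^{-q/2}f|_{\nu(\mu)}^2-C\,|f|_{\nu(\mu)}^2,
\]
which together with \eqref{coe-L} yields $|\mu^{-q/2}f|_{L^2_v}\leqslant C\,|(1+|v|)^2\mu^{-q/2}g|_{L^\infty_v}$. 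This weighted-$L^2$ step is the missing ingredient in your bootstrap.

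\textbf{Second, in \eqref{L-inv-decay-2}.} You propose to apply \eqref{L-inv-decay-1} directly to $D_w f$. But \eqref{L-inv-decay-1} requires its argument to lie in $\mathrm{Null}^\perp(\L)$, and although $f\in\mathrm{Null}^\perp(\L)$, the derivative $Df$ generally does \emph{not}. The paper handles this by the macro--micro split $Df=\P(Df)+\h(Df)$. The microscopic part satisfies $\L\big(\h(Df)\big)=Dg-(D\nu)f+\overline\K f$ with $\overline\K f:=D(\K f)-\K(Df)$, to which \eqref{L-inv-decay-1} applies after estimating $(D\nu)f$ and $\overline\K f$; this is where the loss $q\to\widetilde q$ enters. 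The macroscopic part $\P(Df)$ is bounded separately by exploiting $f\perp\mathrm{Null}(\L)$: the identity $\langle Df,\chi\sqrt{\mu(1-\mu)}\rangle=-\langle f,D(\chi\sqrt{\mu(1-\mu)})\rangle$ for each collision invariant $\chi$ produces the factor $|(D\f,D\u,DT)|$ and is controlled by $|f|_{L^\infty_v}$. Without this decomposition the commutator identity alone does not permit invoking \eqref{L-inv-decay-1}.
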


\begin{proof}
  Throughout the proof, we use the estimates \eqref{mu-Es} frequently. The proof of first two statements refer to \cite{Jiang-Xiong-Zhou-2021}. Now in the spirit of Section V in \cite{Grad-1962}, we prove the last assertion provided the third statement holds.

  Let $0<q<1$ be fixed and $\nu_0>0$ be the infimum of $\nu$. Then from $\L f =g$, we deduce that
  \begin{equation*}
    \mu^{-q/2}|f|=\frac{\mu^{-q/2}}{\nu}\left|g+\K f\right|\leqslant \frac{1}{\nu_0}\left(\left|\mu^{-q/2}g\right|+\mu^{-q/2}|\K f|\right)\leqslant \frac{1}{\nu_0}\left(\left|\mu^{-q/2}g\right|+C|\mu^{-q/2}f|_{L_v^2}\right).
  \end{equation*}
  Note for $f \in Null^\bot(\L)$, we have (see Appendix for more details)
  \begin{equation*}
    \left\langle\mu^{-q/2}\L f, \mu^{-q/2} f \right\rangle_{L_v^2} \geqslant \frac{1}{2} |\mu^{-q/2} f|_{\nu(\mu)}^2 -C|f|_{\nu(\mu)}^2,
  \end{equation*}
  by \eqref{coe-L} and the estimate above we obtain
  \begin{equation*}
    \begin{aligned}
    |\mu^{-q/2} f|_{L_v^2}^2 \leqslant &C\left\langle\mu^{-q/2}\L f, \mu^{-q/2} f \right\rangle_{L_v^2}+C\left\langle \L f, f \right\rangle_{L_v^2}\\
    \leqslant & C\int_{\R^3}\left|\{\mu^{-q/2} f\}(v)\right| \left|(1+|v|)^2\{\mu^{-q/2}g\}(v)\right| \frac{1}{(1+|v|)^2} \d v\\
    \leqslant & C \left|(1+|v|)^2 \{\mu^{-q/2}g\}\right|_{L_v^\infty} \left|\mu^{-q/2} f\right|_{L_v^2} \left\{\int_{\R^3} \frac{1}{(1+|v|)^4} \d v\right\}^{1/2}\\
    \leqslant  & C \left|(1+|v|)^2 \{\mu^{-q/2}g\}\right|_{L_v^\infty} \left|\mu^{-q/2} f\right|_{L_v^2}.
    \end{aligned}
  \end{equation*}
  It follows that $|\mu^{-q/2} f|_{L_v^2} \leqslant C \left|(1+|v|)^2 \{\mu^{-q/2}g\}\right|_{L_v^\infty}$ and hence
  \begin{equation*}
    |f| \leqslant \frac{1}{\nu_0}\left(\left|\mu^{-q/2}g\right|+C\left|(1+|v|)^2\{\mu^{-q/2}g\}\right|_{L_v^\infty}\right)\mu^{q/2} \leqslant C \left|(1+|v|)^2 \{\mu^{-q/2}g\}\right|_{L_v^\infty}\mu^{q/2},
  \end{equation*}
  which is the estimate \eqref{L-inv-decay-1}.
  \begin{comment}
  \begin{equation*}
    \begin{aligned}
    |f|_{L_v^2}^2 \leqslant & C \langle \L f,\,f \rangle_{L_v^2} = C\int_{\R^3} f(v) \left[\,(1+|v|)^2g(v)\,\right] \frac{1}{(1+|v|)^2} \d v\\
    \leqslant & C \left|(1+|v|)^2 g\right|_{L_v^\infty} |f|_{L_v^2} \int_{\R^3} \frac{1}{(1+|v|)^4} \d v \leqslant  C \left|(1+|v|)^2 g\right|_{L_v^\infty} |f|_{L_v^2}.
    \end{aligned}
  \end{equation*}
  It follows that $|f|_{L_v^2}\leqslant C \left|(1+|v|)^2 g\right|_{L_v^\infty}$ and hence
  \begin{equation*}
    |f| \leqslant \frac{1}{\nu_0}\left(|g|+C\left|(1+|v|)^2 g\right|_{L_v^\infty}\right) \leqslant C \left|(1+|v|)^2 g\right|_{L_v^\infty}.
  \end{equation*}
\end{comment}

Next we turn to the estimate of $D_w f$ for $D_w = v\cdot\nabla_x$ and $D=\nabla_x$. To this end, we define
\begin{equation*}
  \overline{\K}_1 f \equiv D (\K_1 f) - \K_1 (D f),\quad \overline{\K_2} f \equiv D (\K_2 f) - \K_2 (D f),
\end{equation*}
then
\begin{equation*}
  \overline{\K} f \equiv \overline{\K}_2 f-\overline{\K}_1 f  = D (\K f) - \K (D f).
\end{equation*}
Taking $D$ on both sides of $\L f =g$ to yield
\begin{equation}\label{L-D-f}
  \L (D f) = D g -\left[(D \nu)f-\overline{\K} f\right].
\end{equation}
In order to evaluate $Df$, we decompose it into
\begin{equation*}
  Df = \P(Df) + \h(Df).
\end{equation*}
For the fluid part $\P(Df) $, since $f \in Null^\bot(\L)$, we introduce
\begin{equation*}
    \begin{aligned}
     X= & \left\langle D f, \sqrt{\mu(1-\mu)} \right\rangle_{L_v^2} = -\left\langle f, D\sqrt{\mu(1-\mu)}\right\rangle_{L_v^2},\\
     Y= & \left\langle D f, (v-\u)\sqrt{\mu(1-\mu)} \right\rangle_{L_v^2} = -\left\langle f, D\left\{(v-\u)\sqrt{\mu(1-\mu)}\right\}\right\rangle_{L_v^2},\\
     Z= & \left\langle D f, |v-\u|^2\sqrt{\mu(1-\mu)} \right\rangle_{L_v^2} = -\left\langle f, D\left\{|v-\u|^2\sqrt{\mu(1-\mu)}\right\}\right\rangle_{L_v^2}.
    \end{aligned}
  \end{equation*}
Then we can expand $\P(Df) $ as
\begin{equation*}
      \P (Df) = \left(\,\rho_{Df} + u_{Df}\cdot (v-\u) + \theta_{Df}|v-\u|^2\,\right)\sqrt{\mu(1-\mu)},
    \end{equation*}
where $\rho_{Df}(t,x),\,\theta_{Df}(t,x) \in \R$ and $u_{Df}(t,x) \in \R^3$ are given by
  \begin{equation*}
    \rho_{Df}(t,x)= \frac{Z-2K_ATX}{2(3E_2/2-K_AE_0T)},\quad u_{Df}= \frac{Y}{3E_2}, \quad \theta_{Df}= \frac{3E_2X-E_0Y}{6E_2(3E_2/2-K_AE_0T)}.
  \end{equation*}
Therefore, $\P(Df)$ is bounded by
\begin{equation}\label{PD-f-Es}
  C|(D\f,D\u,DT)||f|_{L_v^\infty}(1+|v|)^2\mu^{1/2}.
\end{equation}
Next, for any fixed $\widetilde{q}$ with $q<\widetilde{q}<1$, taking $q_1$ satisfying $q<q_1<\widetilde{q}$, then it follows from the estimate above, the equality \eqref{L-D-f} and \eqref{L-inv-decay-1} (replacing $q$ by $q_1$) that
\begin{equation}\label{D-w-f}
  \begin{aligned}
  &|D_w f| \leqslant |(1+|v|)\left(\P D f +\h D f\right)|\\
  &\quad\leqslant   C(D\f,D\u,DT)\left|(1+|v|)^2\mu^{-q_1/2}g\right|_{L_v^\infty}(1+|v|)^3\mu^{1/2}\\
  &\qquad +C \left|(1+|v|)^2\mu^{-q_1/2}\left\{D g -\left[(D \nu)f-\overline{\K} f\right]\right\}\right|_{L_v^\infty}(1+|v|)\mu^{q_1/2}\\
  &\qquad\leqslant C(D\f,D\u,DT)\left|(1+|v|)^2\mu^{-q_1/2}g\right|_{L_v^\infty}\mu^{q/2}\\
  &\qquad\quad+C\left|(1+|v|)^2\mu^{-q_1/2}\left\{D g -\left[(D \nu)f-\overline{\K} f\right]\right\}\right|_{L_v^\infty}\mu^{q/2}.
  \end{aligned}
\end{equation}
Thus it is sufficient to estimate $(D \nu)f$ and $\overline{\K} f$. A simple calculation give us for some constant $C>0$,
\begin{equation*}
  |D \nu| \leqslant C\Big|(D\f,D\u,D T)\Big|(1+|v|).
\end{equation*}
Hence by using \eqref{L-inv-decay-1} with replacing $q$ by $\widetilde{q}$, we derive
\begin{equation}\label{D-nu-f}
  \begin{aligned}
  (1+|v|)^2\mu^{-q_1/2}|(D \nu) f| \leqslant & C\Big|(D\f,D\u,D T)\Big|\left|(1+|v|)^3\mu^{-q_1/2}f\right|\\
  \leqslant & C\Big|(D\f,D\u,D T)\left|(1+|v|)^2\mu^{-\widetilde{q}/2}g\right|_{L_v^\infty}(1+|v|)^3\mu^{(\widetilde{q}-q_1)/2}\\
  \leqslant & C\Big|(D\f,D\u,D T)\left|(1+|v|)^2\mu^{-\widetilde{q}/2}g\right|_{L_v^\infty}.
  \end{aligned}
\end{equation}
For $\overline{\K}_1 f$, we have
\begin{equation*}
  \begin{aligned}
  \overline{\K}_1 f = & D \iint_{\R^3\times\S^2}|(v_*-v)\cdot\omega| \sqrt{\mu\mu_*}\frac{(1-\mu_*^\prime)(1-\mu^\prime)}{\sqrt{(1-\mu)(1-\mu_*)}}f_*\d \omega\d v_*-\K_1 (D f)\\
  = & \iint_{\R^3\times\S^2}|(v_*-v)\cdot\omega| \left\{ \left( D\sqrt{\mu\mu_*}\right)\frac{(1-\mu_*^\prime)(1-\mu^\prime)}{\sqrt{(1-\mu)(1-\mu_*)}} + \sqrt{\mu\mu_*}D\frac{(1-\mu_*^\prime)(1-\mu^\prime)}{\sqrt{(1-\mu)(1-\mu_*)}}\right\}
  f_*\d \omega\d v_*.
  \end{aligned}
\end{equation*}
For the above $\widetilde{q}$, computing directly gives us
\begin{equation*}
  \left|\left(D\sqrt{\mu\mu_*}\right)\frac{(1-\mu_*^\prime)(1-\mu^\prime)}{\sqrt{(1-\mu)(1-\mu_*)}} + \sqrt{\mu\mu_*}D\frac{(1-\mu_*^\prime)(1-\mu^\prime)}{\sqrt{(1-\mu)(1-\mu_*)}}\right| \leqslant C\Big|(D\f,D\u,D T)\Big| (\mu\mu_*)^{\widetilde{q}/2}.
\end{equation*}
As a result, we obtain
\begin{equation}\label{D-K-1-f}
  \begin{aligned}
  (1+|v|)^2\mu^{-q_1/2}|\overline{\K}_1 f| \leqslant & C\Big|(D\f,D\u,D T)\Big| \left|\mu^{-q_1/2}f\right|_{L_v^\infty} \int_{\R^3}|v_*-v|(1+|v|)^2(\mu\mu_*)^{(\widetilde{q}-q_1)/2}\d v_*\\
  \leqslant & C\Big|(D\f,D\u,D T)\Big|\left|(1+|v|)^2\mu^{-q_1/2}g\right|_{L_v^\infty}.
  \end{aligned}
\end{equation}
On the other hand, for
\begin{equation*}
  \begin{aligned}
  \overline{\K}_2 f =& D \left\{2\iint_{\R^3\times\S^2}|(v_*-v)\cdot\omega|\frac{\N}{\sqrt{\mu\mu'}\sqrt{(1-\mu)(1-\mu')}} f'\d \omega\d v_*\right\}-\K_2 (D f)\\
  = & 2\iint_{\R^3\times\S^2}|(v_*-v)\cdot\omega| \left\{\left(D\sqrt{\mu_*\mu_*^\prime}\right)(1-\mu_*^\prime)(1-\mu_*) + \sqrt{\mu_*\mu_*^\prime}D[(1-\mu_*^\prime)(1-\mu_*) ]\right\}f'\d \omega\d v_*,
  \end{aligned}
\end{equation*}
we can also get
\begin{equation*}
  \left|\left(D\sqrt{\mu_*\mu_*^\prime}\right)(1-\mu_*^\prime)(1-\mu_*) + \sqrt{\mu_*\mu_*^\prime}D[(1-\mu_*^\prime)(1-\mu_*) ]\right| \leqslant C\Big|(D\f,D\u,D T)\Big| \left(\mu_*\mu_*^\prime\right)^{\widetilde{q}/2}.
\end{equation*}
Hence
\begin{equation}\label{D-K-2-f}
  \begin{aligned}
  (1+|v|)^2\mu^{-q_1/2}|\overline{\K}_2 f| \leqslant & C\Big|(D\f,D\u,D T)\Big| \left|\mu^{-\widetilde{q}/2}f\right|_{L_v^\infty} \times\\
  &\qquad\iint_{\R^3}|v_*-v|(1+|v|)^2\left(\mu^\prime\mu_*^\prime\right)^{\widetilde{q}/2} \mu_*^{\widetilde{q}/2}\mu^{-q_1/2}\d\omega\d v_*\\
  \leqslant & C\Big|(D\f,D\u,D T)\Big|\left|(1+|v|)^2\mu^{-\widetilde{q}/2}g\right|_{L_v^\infty}.
  \end{aligned}
\end{equation}

In summary, the above procedure also applies to the case $D_w=D=\partial_t$. Therefore, plugging the estimates \eqref{D-nu-f}, \eqref{D-K-1-f} and \eqref{D-K-2-f} into \eqref{D-w-f}, we can establish that for any $0<q<\widetilde{q}<1$,
\begin{equation*}
  |D_w f| \leqslant C \Big|(D\f,D\u,D T)\Big|\left\{\left|(1+|v|)^2\mu^{-\widetilde{q}/2}D g\right|_{L_v^\infty}+ \left|(1+|v|)^2\mu^{-\widetilde{q}/2}g\right|_{L_v^\infty}\right\}\mu^{q/2}.
\end{equation*}

  Finally, we focus on the third statement. For $\K_1 f$, we deduce from \eqref{K-1K-2} that
    \begin{equation}\label{K-1-Es}
    \begin{aligned}
        \mu^{-q/2}|\K_1 f|\leqslant & C\int_{\R^3}|v_*-v|\sqrt{\mu\mu_*}\mu^{-q/2} |f_*|\d v_*\\
        \leqslant & C \left\{\int_{\R^3}|v_*-v|^2(\mu\mu_*)^{1-q}\d v_*\right\}^{1/2}\left\{\int_{\R^3}|f_*|^2\d v_*\right\}^{1/2} \leqslant C|f|_{L_v^2}.
  \end{aligned}
  \end{equation}
  For $\K_2 f$,  by using the variable changing $v_*-v\to V$ and the classical transformation (see page 43 in \cite{Glassey-1996Book})
  \begin{equation*}
    \d \omega \d V=\frac{2\d V_\bot \d V_{\p}}{|V_{\p}|^2},\quad V_{\p}=(V\cdot\omega)\omega\in \R^3,\,V_\bot=V-V_{\p}\in \R^2,
  \end{equation*}
  we arrive at
  \begin{equation*}
        |\K_2 f| \leqslant C \iint_{\R^3\times \R^2}|V_{\p}||f(v+V_{\p})| \mu^{-1/2}(v+V_{\p})\mu^{1/2}(v)\mu(v+V)\frac{\d V_\bot \d V_{\p}}{|V_{\p}|^2}.
  \end{equation*}
  Let
  \begin{equation*}
    \eta=v+V_{\p},\quad \zeta=\frac{1}{2}(v+\eta),
  \end{equation*}
  then
  \begin{equation*}
    V_\bot\cdot\zeta=V_\bot\cdot v=V_\bot \cdot\eta.
  \end{equation*}
  Thus
  \begin{equation*}
    \begin{aligned}
    &-\frac{|v-\u|^2}{2}+\frac{|\eta-\u|^2}{2}-|\eta+V_\bot-\u|^2 =-\frac{|v-\u|^2}{2}-\frac{|\eta-\u|^2}{2}-2V_\bot\cdot(\zeta-\u)-|V_\bot|^2\\
    &\qquad=-\frac{|v-\u|^2}{2}-2V_\bot\cdot(\zeta-\u)-2|\zeta-\u|^2+2(\zeta-\u)\cdot(v-\u)-\frac{|v-\u|^2}{2}-|V_\bot|^2\\
    &\qquad=(-|V_\bot|^2-2V_\bot\cdot(\zeta-\u)-|\zeta-\u|^2)+(-|\zeta-\u|^2+2(\zeta-\u)\cdot(v-\u)-|v-\u|^2)\\
    &\qquad=-|V_\bot+\zeta-\u|^2-\frac{1}{4}|V_{\p}|^2.
    \end{aligned}
  \end{equation*}
  Note the estimates \eqref{mu-Es}, we acquire
  \begin{equation}\label{K-2-Es}
        \begin{aligned}
        \mu^{-q/2}|\K_2 f| \leqslant & C \iint_{\R^3\times \R^2}|V_{\p}|\left|\{\mu^{-q/2}f\}(v+V_{\p})\right| \mu^{-(1-q)/2}(v+V_{\p})\mu^{(1-q)/2}(v)\mu(v+V)\frac{\d V_\bot \d V_{\p}}{|V_{\p}|^2}\\
        \leqslant & C \int_{\R^3}\frac{1}{|V_{\p}|}|\{\mu^{-q/2}f\}(v+V_{\p})| \exp\left\{-\frac{(1-q)|V_{\p}|^2}{8T}\right\} \d V_{\p}\\
        \leqslant &C \left\{\int_{\R^3} \frac{1}{|V_{\p}|^2}\exp\left\{-\frac{(1-q)|V_{\p}|^2}{4T}\right\} \d V_{\p}\right\}^{1/2}\left\{\int_{\R^3} |\{\mu^{-q/2}f\}(v+V_{\p})|^2 \d V_{\p}\right\}^{1/2}\\
        \leqslant &C |\mu^{-q/2}f|_{L_v^2}.
        \end{aligned}
  \end{equation}
  where we have used that $\int_{\R^2}\exp\left\{-\frac{|V_\bot+\zeta_\bot|^2}{2T}\right\}\d V_\bot$ is bounded for $\zeta_{\p}=[(\zeta-\u)\cdot\omega]\omega$ and $\zeta_\bot=\zeta-\zeta_{\p}$. By the estimates \eqref{K-1-Es} and \eqref{K-2-Es}, we obtain the third statement as announced.
\end{proof}

\begin{remark}
  Comparing with the Section 3 in \cite{Caflisch-1980CPAM}, where ($\L_B$ represents the linearized Boltzmann collision operator) it is pointed out that : $\L_B^{-1}$ preserves the decay property of $v$, we give the explicit estimate \eqref{L-inv-decay-1} for this property.
\end{remark}

\subsection{The Estimates for Kinetic Part}

First of all, we define the nonlinear operators $\widetilde{\Q}$ and $\widetilde{\T}$ by
 \begin{equation}\label{Q-til-op}
  \begin{aligned}
  \widetilde{\Q}(f,g)=&\frac{1}{\sqrt{\mu(1-\mu)}}\Q(\sqrt{\mu(1-\mu)}f,\,\sqrt{\mu(1-\mu)}g)\\
  =&\iint\limits_{\R^3\times\S^2}|(v-v_*)\cdot\omega|\frac{\sqrt{\N}}{\sqrt{\mu(1-\mu)}} \left[\, \frac{\sqrt{(1-\mu_*^\prime)(1-\mu^\prime)}}{\sqrt{(1-\mu_*)(1-\mu)}} f_*^\prime g^\prime- \frac{\sqrt{(1-\mu_*)(1-\mu)}}{\sqrt{(1-\mu_*^\prime)(1-\mu^\prime)}} f_* g\,\right]\d \omega\d v_*,
  \end{aligned}
\end{equation}
and
\begin{equation}\label{T-til-op}
  \begin{aligned}
  \widetilde{\T}(f,g,h)=&\frac{1}{\sqrt{\mu(1-\mu)}}\T(\sqrt{\mu(1-\mu)}f,\,\sqrt{\mu(1-\mu)}g,\,\sqrt{\mu(1-\mu)}h)\\
  =&\iint\limits_{\R^3\times\S^2}|(v-v_*)\cdot\omega|\frac{\sqrt{\N}}{\sqrt{\mu(1-\mu)}} \Big[\, \frac{\sqrt{(1-\mu_*^\prime)(1-\mu^\prime)}}{\sqrt{(1-\mu_*)(1-\mu)}}f_*^\prime g^\prime\left(\,\sqrt{\mu_*(1-\mu_*)}h_*+\sqrt{\mu(1-\mu)}h\,\right)\\
  &\hspace{3cm}- \frac{\sqrt{(1-\mu_*)(1-\mu)}}{\sqrt{(1-\mu_*^\prime)(1-\mu^\prime)}} f_* g \left(\,\sqrt{\mu_*^\prime(1-\mu_*^\prime)}h_*^\prime+\sqrt{\mu^\prime(1-\mu^\prime)}h^\prime\,\right)\,\Big]\d \omega\d v_*.
  \end{aligned}
\end{equation}

Now we are ready to evaluate
\begin{equation*}
  \tfrac{F_n}{\sqrt{\mu(1-\mu)}} \equiv f_n,\quad n=0,\cdots,3.
\end{equation*}
From the expansions \eqref{PF-n-Exp} of $\P f_n$ and the uniform estimates \eqref{rho-u-theta-n-Es}, we deduce that
\begin{equation}\label{PF-n-Es}
    |\P f_n|\leqslant C\, |(\rho_n,u_n,\theta_n)|\,(1+|v|)^2\mu^{1/2},\;n=1,2.
\end{equation}
Consequently, the same proposition as Proposition 3.1 in \cite{Caflisch-1980CPAM} can be established:
\begin{proposition}\label{Prop-F-n-Es}
Let $(\rho,\u,\E)$ be the smooth solution of the compressible Euler equations \eqref{Com-Euler-NonConser}. Hence $\f$ and $T$ given by \eqref{f} and \eqref{T} respectively are smooth under the assumption \eqref{Hypo} and $(\f,\u,T)$ form the local Fermi-Dirac distribution $\mu$. Then the coefficients $F_1,\,F_2,\,F_3$ of Hilbert expansion are smooth in $(t,x)$ and for any $0<q<1$, the coefficients have the decay given by
    \begin{equation}\label{F-n-Es}
    |F_n(t,x,v)|\leqslant C_{\beta^{(n)}}\mu^{(1+q)/2}(t,x,v),\;n=1,\,2,\,3,
    \end{equation}
    where
    \begin{equation*}
      \beta^{(n)}=(\beta_0^{(n)},\beta_1^{(n)},\beta_2^{(n)},\beta_3^{(n)}) \,\text{ and } \,\partial^{\beta^{(n)}}=\partial_t^{\beta_0^{(n)}}\partial_{x_1}^{\beta_1^{(n)}}\partial_{x_2}^{\beta_2^{(n)}}\partial_{x_3}^{\beta_3^{(n)}}
    \end{equation*}
    $C_{\beta^{(n)}}>0$ depends on $|\partial^{\gamma^{(n)}}(\rho_i,u_i,\theta_i)|$, $|D^{n-1}\nabla_x(\u,T)|$, $|D^{n-1}(\f,\u,T)|$ and $C_{\beta^{(j)}}$ for $|\beta^{(n)}|\leqslant n$, $\gamma^{(n)}=n-i$ and $j\leqslant n-1$, here $\gamma^{(n)}=(\gamma_0^{(n)},\gamma_1^{(n)},\gamma_2^{(n)},\gamma_3^{(n)})$.
\end{proposition}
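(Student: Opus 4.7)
The plan is to argue by induction on $n$, exploiting the decomposition $F_n = [\P f_n + \h f_n]\sqrt{\mu(1-\mu)}$ for $n=1,2$ and $F_3 = \h f_3\,\sqrt{\mu(1-\mu)}$. Smoothness of $F_n$ in $(t,x)$ will come in parallel with the decay estimate, by invoking the smoothness of $(\rho,\u,\E)$ and of the induced $(\f,\u,T)$ under the hypothesis \eqref{Hypo}, together with Theorem \ref{Thm-rho-u-theta-n} for $(\rho_n,u_n,\theta_n)$ and Proposition \ref{Prop-fT-Es}.

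For the fluid part, the expression \eqref{PF-n-Exp} for $\P f_n$ combined with the bound \eqref{PF-n-Es} yields
\begin{equation*}
    |\P f_n|\,\sqrt{\mu(1-\mu)} \leqslant C\,|(\rho_n,u_n,\theta_n)|\,(1+|v|)^2\mu.
\end{equation*}
Since $\mu \sim e^{-|v-\u|^2/(2T)}$ uniformly with $T\in (T_m, 2T_m)$ by Proposition \ref{Prop-fT-Es}, the factor $(1+|v|)^2\mu^{(1-q)/2}$ is uniformly bounded for any fixed $0<q<1$. Hence $(1+|v|)^2\mu \leqslant C\,\mu^{(1+q)/2}$, delivering the desired bound on the fluid part.

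For the kinetic part, I would invoke the representation \eqref{I-P-F-n} writing
\begin{equation*}
    \h f_n = \L^{-1}\biggl(-\frac{(\partial_t+v\cdot\nabla_x)F_{n-1}}{\sqrt{\mu(1-\mu)}} + \sum_{\substack{i+j=n\\i,j<n}}\frac{\Q(F_i,F_j)}{\sqrt{\mu(1-\mu)}} - \sum_{\substack{i+j+k=n\\i,j,k<n}}\frac{\T(F_i,F_j,F_k)}{\sqrt{\mu(1-\mu)}}\biggr).
\end{equation*}
Pick an auxiliary exponent $\widetilde{q}$ with $q<\widetilde{q}<1$. The inductive hypothesis $|F_i|\leqslant C\,\mu^{(1+\widetilde{q})/2}$ for $i<n$, together with the definitions \eqref{Q-op}--\eqref{T-op} and the conservation relations \eqref{Vel-cons}, lets one bound the quadratic and cubic source terms by $C(1+|v|)^c\mu^{\widetilde{q}/2}$ for some polynomial degree $c$. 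Estimate \eqref{L-inv-decay-1} of Proposition \ref{Prop-L-proper} then upgrades these sources, after normalization by $\sqrt{\mu(1-\mu)}$, into a bound of the form $C\,\mu^{q/2}$ on $\h f_n$, which multiplied by $\sqrt{\mu(1-\mu)}$ is dominated by $\mu^{(1+q)/2}$. The transport term $(\partial_t + v\cdot\nabla_x)F_{n-1}/\sqrt{\mu(1-\mu)}$ carries one extra factor of $v$ together with a time or space derivative, and is precisely the situation covered by \eqref{L-inv-decay-2}, provided an auxiliary induction supplies the needed decay on $\partial^\beta F_{n-1}$ for $|\beta|\leqslant 1$.

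The main obstacle, as usual in Caflisch-type Hilbert arguments, is the decay bookkeeping: at every inductive step each application of \eqref{L-inv-decay-1}--\eqref{L-inv-decay-2} consumes a small amount of the exponent margin, so one must fix a decreasing sequence of thresholds $q_0>q_1>\cdots>q_n=q$ in $(0,1)$ tailored to the depth of nesting, so that polynomial prefactors can always be absorbed into the exponential decay of $\mu^{(q_{k-1}-q_k)/2}$. A parallel induction tracks the $(t,x)$-smoothness: differentiating \eqref{I-P-F-n} generates terms where derivatives fall either on $\mu$ (controlled by Proposition \ref{Prop-fT-Es} and Theorem \ref{Thm-rhouE-Existence}), on some $F_i$ with $i<n$ (handled inductively), or on $(\rho_n,u_n,\theta_n)$ (controlled by Theorem \ref{Thm-rho-u-theta-n}). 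Assembling the dependence of $C_{\beta^{(n)}}$ on these norms is routine once the decay templates are in place.
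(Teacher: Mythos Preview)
Your proposal is correct and follows essentially the same route as the paper: decompose $f_n$ into its macroscopic and microscopic parts, control $\P f_n$ directly from \eqref{PF-n-Exp}--\eqref{PF-n-Es}, and control $\h f_n$ by applying \eqref{L-inv-decay-1} to the source in \eqref{I-P-F-n}, with the transport piece $(\partial_t+v\cdot\nabla_x)f_{n-1}$ handled via \eqref{L-inv-decay-2} and a cascading choice of exponents $q<q_3<q_2<q_1<\widetilde q<1$. The paper does not phrase this as a formal induction but unrolls the three cases $n=1,2,3$ explicitly; the only point you should make more precise is that the transport source is really $(\partial_t+v\cdot\nabla_x)f_{n-1}+\tfrac{(1-2\mu)(\partial_t+v\cdot\nabla_x)\mu}{2\mu(1-\mu)}f_{n-1}$, so besides the term governed by \eqref{L-inv-decay-2} there is an extra zeroth-order piece (from differentiating $\sqrt{\mu(1-\mu)}$) that must be estimated separately.
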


\begin{proof}
  Noting that for $f_n\,(n=0,\cdots,3)$, we already have  $|f_0| \leqslant C \mu^{1/2}$ and the estimates \eqref{PF-n-Es}, thus it suffices to evaluate $\h f_n$ for $n=1,2,3$.

  {\bf Estimate for $\h f_1$}. Recall \eqref{I-P-F-n}, we know
\begin{equation}\label{I-P-f1}
  \h f_1=\L^{-1} \left(-\tfrac{(\partial_t+v\cdot\nabla_x)\mu}{\sqrt{\mu(1-\mu)}}\right)=-\L^{-1}\left(\nabla_x \u : \B\left(\tfrac{v-\u}{\sqrt{T}}\right)+\frac{\nabla_x T}{\sqrt{T}}\cdot\A\left(\tfrac{v-\u}{\sqrt{T}}\right)\right).
\end{equation}
    For any fixed $0<q<\widetilde{q}<1$, we take $q_1$ with $q<q_1<\widetilde{q}$. It follows from \eqref{L-inv-decay-1} that
\begin{equation*}
  \begin{aligned}
  \left|\h f_1\right| \leqslant &C \left|(1+|v|)^2\mu^{-q_1/2}\left(\nabla_x \u : \B\left(\tfrac{v-\u}{\sqrt{T}}\right)+\frac{\nabla_x T}{\sqrt{T}}\cdot\A\left(\tfrac{v-\u}{\sqrt{T}}\right)\right)\right|_{L_v^\infty}\mu^{q_1/2}\\
  \leqslant & C \left|(\nabla_x \u, \nabla_x T)\right|\mu^{q_1/2},
  \end{aligned}
\end{equation*}
and hence combining with \eqref{PF-n-Es} we obtain
\begin{equation}\label{F1-Es}
  |f_1(t,x,v)|\leqslant  \left|\P f_1\right|+\left|\h f_1\right|\leqslant C\Big(|(\rho_1,u_1,\theta_1)|+\left|(\nabla_x \u, \nabla_x T)\right|\Big)\mu^{q_1/2}\stackrel{\triangle}{=}C_{\beta^{(1)}}\mu^{q_1/2}.
\end{equation}

Since the operator $\L$  is acting on the variable $v$, the equalities \eqref{I-P-f1} and \eqref{PF-n-Exp} indicate $f_1(t,x,v)$  is smooth in $(t,x)$.

    {\bf Estimate for $\h f_2$}. The kinetic part of $f_2$ satisfies
    \begin{equation}\label{I-P-f2}
      \begin{aligned}
        \L\left(\h f_2\right)=&-\left((\partial_t+v\cdot\nabla_x)f_1+\frac{(1-2\mu)(\partial_t+v\cdot\nabla_x)\mu}{2\mu(1-\mu)}f_1 \right)\\
        &+\left(\widetilde{\Q}(f_1,f_1) -\widetilde{\T}(f_1,f_1,f_0)-\widetilde{\T}(f_1,f_0,f_1)-\widetilde{\T}(f_0,f_1,f_1)\right).
      \end{aligned}
    \end{equation}
    Based on the estimate \eqref{L-inv-decay-1}, we need to estimate the terms on the right hand side above. Firstly, taking $q_2$ with $q<q_2<q_1$, combining \eqref{I-P-f1} with the estimate \eqref{L-inv-decay-2} (replacing $q$ by $q_1$), we obtain
    \begin{equation*}
      \begin{aligned}
      \left|(1+|v|)^2\mu^{-q_2/2}D_w (\h f_1)\right|\leqslant &C\Big|(D\f,D\u,D T)\Big|(1+|v|)^3\mu^{\frac{q_1-q_2}{2}}\times\\
      &\left\{\left|(1+|v|)^2\mu^{-\widetilde{q}/2}D \left(\nabla_x \u : \B\left(\tfrac{v-\u}{\sqrt{T}}\right)+\frac{\nabla_x T}{\sqrt{T}}\cdot\A\left(\tfrac{v-\u}{\sqrt{T}}\right)\right)\right|_{L_v^\infty}\right.\\ &\qquad\qquad\left.+\left|(1+|v|)^2\mu^{-\widetilde{q}/2}\left(\nabla_x \u : \B\left(\tfrac{v-\u}{\sqrt{T}}\right)+\frac{\nabla_x T}{\sqrt{T}}\cdot\A\left(\tfrac{v-\u}{\sqrt{T}}\right)\right)\right|_{L_v^\infty}\right\}\\
      \leqslant & C|D(\f,\u,T)|\Big\{|D(\nabla_x\u,\nabla_x T)|+\\
      &\qquad\quad|(\nabla_x\u,\nabla_x T)|\big[\,1+|D(\f,\u,T)|\,\big]+|D\u|(|\nabla_x\u|+|\nabla_x T|)\Big\}.
      \end{aligned}
    \end{equation*}
    On the other hand, clearly, we have
    \begin{equation*}
      \left|(1+|v|)^2\mu^{-q_2/2}D(\P f_1)\right|\leqslant C \Big(|D(\rho_1,u_1,\theta_1)|+|D(\f,\u,T)|\Big).
    \end{equation*}
    Therefore, we arrive at
    \begin{equation*}
      \begin{aligned}
      &\left|(1+|v|)^2\mu^{-q_2/2}(\partial_t+v\cdot\nabla_x)f_1\right|\\
      &\qquad\leqslant C\Big\{ |\partial_t(\rho_1,u_1,\theta_1)|+|\nabla_x(\rho_1,u_1,\theta_1)|+|\partial_t(\f,\u,T)|+|\nabla_x(\f,\u,T)|\Big\}\times\\
      &\qquad\qquad\qquad\Big\{\,1+|\partial_t(\nabla_x \u,\nabla_x  T)|+|\nabla_x(\nabla_x \u,\nabla_x  T)|+(|\partial_t\u|+|\nabla_x\u|)(|\nabla_x\u|+|\nabla_x T|)\\
      &\hspace{6cm}+|(\nabla_x \u,\nabla_x  T)|\big[\,1+|\partial_t(\f,\u,T)|+|\nabla_x(\f,\u,T)|\,\big]\Big\}.
      \end{aligned}
    \end{equation*}
    Next, the estimate \eqref{F1-Es} implies that
    \begin{equation*}
      \begin{aligned}
      &\left|(1+|v|)^2\mu^{-q_2/2}\frac{(1-2\mu)(\partial_t+v\cdot\nabla_x)\mu}{2\mu(1-\mu)}f_1\right|\\
      &\qquad= \left|(1+|v|)^2\mu^{-q_2/2}\frac{1-2\mu}{2\sqrt{\mu(1-\mu)}}\left(\nabla_x \u : \B\left(\tfrac{v-\u}{\sqrt{T}}\right)+\frac{\nabla_x T}{\sqrt{T}}\cdot\A\left(\tfrac{v-\u}{\sqrt{T}}\right)\right) f_1\right|\\
      &\qquad\leqslant  C\Big(|(\rho_1,u_1,\theta_1)|+\left|(\nabla_x \u, \nabla_x T)\right|\Big)^2(1+|v|)^5\mu^{(q_1-q_2)/2} \leqslant C\Big(|(\rho_1,u_1,\theta_1)|+\left|(\nabla_x \u, \nabla_x T)\right|\Big)^2.
      \end{aligned}
    \end{equation*}
    For the nonlinear terms, by \eqref{F1-Es} we know
    \begin{equation*}
      \begin{aligned}
        &\left|(1+|v|)^2\mu^{-q_2/2}\widetilde{\Q}(f_1,f_1)\right|\leqslant C(1+|v|)^2\mu^{-q_2/2}\iint\limits_{\R^3\times\S^2}|(v-v_*)\cdot\omega|\sqrt{\mu_*} \left[\, \left|(f_1)_*^\prime f_1^\prime\right| + \left|(f_1)_* f_1\right|\,\right]\d \omega\d v_*\\
        &\qquad\leqslant C\Big(|(\rho_1,u_1,\theta_1)|+\left|(\nabla_x \u, \nabla_x T)\right|\Big)^2\int\limits_{\R^3}(1+|v|)^2|v-v_*|\mu_*^{(1+q_1)/2}\mu^{(q_1-q_2)/2}\d v_*\\
        &\qquad\leqslant C\Big(|(\rho_1,u_1,\theta_1)|+\left|(\nabla_x \u, \nabla_x T)\right|\Big)^2.
      \end{aligned}
    \end{equation*}
    and
    \begin{equation*}
      \begin{aligned}
        &\left|(1+|v|)^2\mu^{-q_2/2}\left\{\widetilde{\T}(f_0,f_1,f_1)+\widetilde{\T}(f_1,f_0,f_1)\right\}\right|\leqslant C(1+|v|)^2\mu^{-q_2/2}\times\\
        &\quad\iint\limits_{\R^3\times\S^2}|(v-v_*)\cdot\omega|\sqrt{\mu_*} \left\{\, \left|(f_0)_*^\prime f_1^\prime\right|\left|\sqrt{\mu_*}(f_1)_*+\sqrt{\mu}f_1\right|+\left|(f_0)_* f_1\right|\left|\sqrt{\mu_*^\prime}(f_1)_*^\prime+\sqrt{\mu^\prime}f_1^\prime\right|\right.\\
        &\hspace{4cm}\left.+\left|(f_1)_*^\prime f_0^\prime\right|\left|\sqrt{\mu_*}(f_1)_*+\sqrt{\mu}f_1\right|  + \left|(f_1)_* f_0\right|\left|\sqrt{\mu_*^\prime}(f_1)_*^\prime+\sqrt{\mu^\prime}f_1^\prime\right|\,\right\}\d \omega\d v_*\\
        \leqslant & C\Big(|(\rho_1,u_1,\theta_1)|+\left|(\nabla_x \u, \nabla_x T)\right|\Big)^2\int\limits_{\R^3}(1+|v|)^2|v-v_*|\mu_*^{1/2}\mu^{-q_2/2}\times\\
        &\qquad\qquad\qquad(\mu\mu_*)^{q_1/2}\left(\mu_*^{(1+q_1)/2} +\mu^{(1+q_1)/2}+(\mu_*^\prime)^{(1+q_1)/2}+(\mu^\prime)^{(1+q_1)/2}\right)\d v_*\\
        \leqslant & C\Big(|(\rho_1,u_1,\theta_1)|+\left|(\nabla_x \u, \nabla_x T)\right|\Big)^2.
      \end{aligned}
    \end{equation*}
    Similarly,
    \begin{equation*}
      \left|(1+|v|)^2\mu^{-q_2/2}\widetilde{\T}(f_1,f_1,f_0)\right| \leqslant C\Big(|(\rho_1,u_1,\theta_1)|+\left|(\nabla_x \u, \nabla_x T)\right|\Big)^2.
    \end{equation*}
    In view of all the estimates above, we conclude by \eqref{L-inv-decay-1} that for multi-index $\beta^{(2)}$
    \begin{equation*}
      |\h f_2| \leqslant C_{\beta^{(2)}} \mu^{q_2/2},
    \end{equation*}
    where $C_{\beta^{(2)}}>0$ is depending on $|\partial^{\beta^{(2)}-\gamma}(\rho_1,u_1,\theta_1)|$, $|\partial^{\beta^{(2)}}(\f,\u,T)|$ for $|\beta^{(2)}| \leqslant 2$ and $|\gamma|=1$.

    {\bf Estimate for $\h f_3$}. The kinetic part of $f_3$ satisfies
    \begin{equation*}
        \begin{aligned}
        \L(\h f_3) = & -\left((\partial_t+v\cdot\nabla_x)f_2+\frac{(1-2\mu)(\partial_t+v\cdot\nabla_x)\mu}{2\mu(1-\mu)}f_2 \right)\\
        &\qquad+\widetilde{\Q}(f_1,f_2)+\widetilde{\Q}(f_2,f_1)-\sum_{\substack{i+j+k=3\\i,j,k<3}}\widetilde{\T}(f_i,f_j,f_k).
        \end{aligned}
    \end{equation*}
    We take $q_3$ satisfying $q<q_3<q_2^{\prime\prime\prime}<q_2^{\prime\prime}<q_2^{\prime}<q_2$ and repeat the procedure above to yield
    \begin{equation*}
      \begin{aligned}
        \left|(1+|v|)^2\mu^{-q_3/2}D_w(\P f_2)\right|\leqslant & C \Big(|D(\rho_2,u_2,\theta_2)|+|D(\f,\u,T)|\Big),\\
        \left|(1+|v|)^2\mu^{-q_3/2}\frac{(1-2\mu)(\partial_t+v\cdot\nabla_x)\mu}{2\mu(1-\mu)}f_2\right|\leqslant & C_{\beta^{(2)}}\left|(\nabla_x \u, \nabla_x T)\right|,\\
        \left|(1+|v|)^2\mu^{-q_3/2}\left[\,\widetilde{\Q}(f_1,f_2)+\widetilde{\Q}(f_2,f_1)\,\right]\right|\leqslant & C_{\beta^{(1)}}C_{\beta^{(2)}},\\
        |(1+|v|)^2\mu^{-q_3/2}\sum_{\substack{i+j+k=3\\i,j,k<3}}\widetilde{\T}(f_i,f_j,f_k)|\leqslant & C_{\beta^{(1)}}^3+C_{\beta^{(1)}}^2C_{\beta^{(2)}}+C_{\beta^{(1)}}C_{\beta^{(2)}}^2.
      \end{aligned}
    \end{equation*}

    We turn to $D_w(\h f_2)$. On a basis of \eqref{L-inv-decay-2} (replacing $(q,\widetilde{q})$ by $(q_2^{\prime\prime\prime},q_2^{\prime\prime})$ ) and \eqref{I-P-f2}, we obtain
    \begin{equation*}
      \begin{aligned}
      &\left|(1+|v|)^2\mu^{-q_3/2}D_w (\h f_2)\right|\\
      &\qquad\leqslant C\Big|(D\f,D\u,D T)\Big|(1+|v|)^3\mu^{\frac{q_2^{\prime\prime\prime}-q_3}{2}}\times\\
      &\qquad\qquad\left\{\left|(1+|v|)^2\mu^{-q_2^{\prime\prime}/2}D \left[-\left((\partial_t+v\cdot\nabla_x)f_1+\tfrac{(1-2\mu)(\partial_t+v\cdot\nabla_x)\mu}{2\mu(1-\mu)}f_1 \right)\right.\right.\right.\\ &\qquad\qquad\qquad\left.\left.+\left(\widetilde{\Q}(f_1,f_1) -\widetilde{\T}(f_1,f_1,f_0)-\widetilde{\T}(f_1,f_0,f_1)-\widetilde{\T}(f_0,f_1,f_1)\right)\right]\right|_{L_v^\infty}\\
      &\qquad\qquad+\left|(1+|v|)^2\mu^{-q_2^{\prime\prime}/2}\left[-\left((\partial_t+v\cdot\nabla_x)f_1+\tfrac{(1-2\mu)(\partial_t+v\cdot\nabla_x)\mu}{2\mu(1-\mu)}f_1 \right)\right.\right.\\
      &\qquad\qquad\qquad\left.\left.\left.+\left(\widetilde{\Q}(f_1,f_1) -\widetilde{\T}(f_1,f_1,f_0)-\widetilde{\T}(f_1,f_0,f_1)-\widetilde{\T}(f_0,f_1,f_1)\right)\right]\right|_{L_v^\infty}\right\}.
      \end{aligned}
    \end{equation*}
    Since $q_2^{\prime\prime}<q_2$, we repeat the process of estimating $\h f_2$ to obtain
    \begin{equation*}
      \begin{aligned} &\left|(1+|v|)^2\mu^{-q_2^{\prime\prime}/2}\left((\partial_t+v\cdot\nabla_x)f_1+\tfrac{(1-2\mu)(\partial_t+v\cdot\nabla_x)\mu}{2\mu(1-\mu)}f_1 \right)\right|_{L_v^\infty}\\
      &\qquad+\left|(1+|v|)^2\mu^{-q_2^{\prime\prime}/2}\left(\widetilde{\Q}(f_1,f_1) -\widetilde{\T}(f_1,f_1,f_0)-\widetilde{\T}(f_1,f_0,f_1)-\widetilde{\T}(f_0,f_1,f_1)\right)\right|_{L_v^\infty} \leqslant C_{\beta^{(2)}} .
      \end{aligned}
    \end{equation*}
    A careful calculation yields
    \begin{equation*}
        \begin{aligned}
        &\left|(1+|v|)^2\mu^{-q_2^{\prime\prime}/2}D\left(\tfrac{(1-2\mu)(\partial_t+v\cdot\nabla_x)\mu}{2\mu(1-\mu)}f_1 \right)\right|_{L_v^\infty}+\left|(1+|v|)^2\mu^{-q_2^{\prime\prime}/2}D\widetilde{\Q}(f_1,f_1)\right|_{L_v^\infty}\\
        &\hspace{3cm}+\left|(1+|v|)^2\mu^{-q_2^{\prime\prime}/2}D\left( \widetilde{\T}(f_1,f_1,f_0) +\widetilde{\T}(f_1,f_0,f_1)+\widetilde{\T}(f_0,f_1,f_1)\right)\right|_{L_v^\infty}\\
        \leqslant & \Big\{C_{\beta^{(2)}} + \Big[|D(\nabla_x\u,\nabla_x T)| + |D\u|(|\nabla_x\u|+|\nabla_x T|)\Big]C_{\beta^{(1)}}\Big\}\\
        &\qquad+ C_{\beta^{(1)}}\left(|D(\f,\u,T)|C_{\beta^{(1)}}+C_{\beta^{(2)}}\right)+\Big\{ C_{\beta^{(1)}}C_{\beta^{(2)}} + C_{\beta^{(1)}}^2 + |D(\f,\u,T)|C_{\beta^{(2)}}\Big\}.
        \end{aligned}
    \end{equation*}
    The remaining term that need to be estimated is $D(\partial_t+v\cdot\nabla_x)f_1$, which we decompose into
    \begin{equation*}
      DDf_1= \P DDf_1+ \h DDf_1.
    \end{equation*}
    Obviously, we can expand $\P DDf_1$ to get
    \begin{equation*}
    \begin{aligned}
        &\big|(1+|v|)^3\mu^{-q_2^{\prime\prime}/2}\P DDf_1\big|\\
        &\qquad\leqslant  C\Big\{|DD(\rho_1,u_1,\theta_1)|_{L_v^\infty}+|D(\f,\u,T)||Df_1|_{L_v^\infty}\\
        &\hspace{4cm}+\left(|D(\f,\u,T)|^2+|DD(\f,\u,T)|\right)|f_1|_{L_v^\infty}\Big\}(1+|v|)^5\mu^{(1-q_2^{\prime\prime})/2}\\
        &\qquad\leqslant C|DD(\rho_1,u_1,\theta_1)|_{L_v^\infty} +|D(\f,\u,T)|C_{\beta^{(2)}}+\left(\,|D(\f,\u,T)|^2+|DD(\f,\u,T)|\,\right)C_{\beta^{(1)}}.
    \end{aligned}
  \end{equation*}
    For $\h DDf_1$, since
    \begin{equation*}
      \L(\h DDf_1)=\L(DDf_1) = DDg - (DD\nu)f_1 - (D\nu)Df_1 + D\overline{\K}f_1 - (D\nu)Df_1 + \overline{\K}[Df_1].
    \end{equation*}
    where $g=\nabla_x \u : \B\left(\tfrac{v-\u}{\sqrt{T}}\right)+\frac{\nabla_x T}{\sqrt{T}}\cdot\A\left(\tfrac{v-\u}{\sqrt{T}}\right)$, we make use of \eqref{L-inv-decay-1} (replacing $q$ by $q_2^\prime$) to get
    \begin{equation*}
      \begin{aligned}
        & \left|(1+|v|)^3\mu^{-q_2^{\prime\prime}/2}\h(DDf_1)\right|\leqslant  C(1+|v|)^3\mu^{(q_2^\prime-q_2^{\prime\prime})/2}\times\\
        &\hspace{2cm}\left|(1+|v|)^2\mu^{-q_2^\prime/2} \left[\, DDg - (DD\nu)f_1 - 2(D\nu)Df_1 + D\overline{\K}f_1+ \overline{\K}Df_1\,\right]\right|_{L_v^\infty}.
      \end{aligned}
    \end{equation*}
    Finally, we arrive at
    \begin{equation*}
      \begin{aligned}
      &\left|(1+|v|)^3\mu^{-q_2^{\prime\prime}/2}\h(DDf_1)\right|\\
      &\qquad\leqslant C \Big\{|DD\nabla_x(\u,T)| + [\,|DD(f,\u,T)|+|D(f,\u,T)|^2\,]|\nabla_x(\u,T)|\\
      &\qquad\quad+[\,|\nabla_x\u|+|D(f,\u,T)|\,][\,|D\nabla_x(\u,T)|+|D\u||\nabla_x(\u,T)|\,]\Big\}\\
      &\qquad\quad+C_{\beta^{(1)}} \left(\left|D(\f,\u,T)\right|^2+\left|DD(\f,\u,T)\right|\right)+C_{\beta^{(2)}}\Big|(D\f,D\u,D T)\Big|+\Big|(D\f,D\u,D T)\Big|C_{\beta^{(2)}} \\ &\qquad\quad+\left\{\left|D(\f,\u,T)\right|^2+\left|DD(\f,\u,T)\right|+\left|D(\u,T)\right|\left|D\u\right|\right\}C_{\beta^{(1)}}.
      \end{aligned}
    \end{equation*}

    All in all, we conclude that
    \begin{equation*}
      |\h f_3| \leqslant C_{\beta^{(3)}}\mu^{q_3/2},
    \end{equation*}
    where $C_{\beta^{(3)}}>0$ is depending on $C_{\beta^{(1)}}$, $C_{\beta^{(2)}}$, $\partial^{\gamma} (\rho_i,u_i,\theta_i)$, and $\partial^{\beta^{(3)}}(\f,\u,T)$ for $|\beta^{(3)}|\leqslant 3$ and $|\gamma|+i=3$.

    Note that $0<q<q_3<q_2<q_1<1$, we complete the proof.
\end{proof}

\section{Compressible Euler Limit: Proof of Theorem \ref{Thm-Euler-limit}}\label{Sec-CE-Lim}
In this section, we focus on the compressible Euler limit from the scaled BFD equation \eqref{SBFD}. On the basis of the $L^2-L^\infty$ approach \cite{Guo-2010ARMA, GJJ-2010CPAM}, it suffices to estimate norms $\|f_{R,\eps}\|_2$ and $\|h_{R,\eps}\|_\infty$ for the equation \eqref{Rem-eq} of the reminder $F_{R,\eps}$. Here, $f_{R,\eps}$ and $h_{R,\eps}$ are given by \eqref{Reminder-f} and \eqref{Reminder-h} respectively, i.e.
\begin{equation*}
  f_{R,\eps}=\frac{F_{R,\eps}}{\sqrt{\mu(1-\mu)}},\quad h_{R,\eps}=w(v)\frac{F_{R,\eps}}{\sqrt{\mu_F(1-\mu_F)}},\quad \mu_F=\frac{1}{1+e^{\frac{|v|^2}{2T_m}-1}},\quad w(v)=(1+|v|)^l,\,l\geqslant 9.
\end{equation*}

Note the relation \eqref{Relation-rhoEfT} between $\rho$ and $(\f, T)$ implies that $\rho$ has a positive lower bound:
\begin{equation*}
  \int_{\R^3}\frac{T_m^{3/2}}{1+e^{\frac{|v|^2}{2}-\f_m}}\d v \leqslant \rho(t,x) \leqslant \int_{\R^3}\frac{T_M^{3/2}}{1+e^{\frac{|v|^2}{2}-\f_M}}\d v,
\end{equation*}
combining with the condition \eqref{T-bd}, we are able to establish the following $L^2$ and $L^\infty$ estimates to certify Theorem \ref{Thm-Euler-limit}.
\begin{lemma}[$L^2$-Estimates]\label{Lem-L2-Es}
  Let $(\rho,\u,\E)(t,x)$ be the smooth solution to the compressible Euler system \eqref{Com-Euler-NonConser} over $[0,\tau]$ obtained in Theorem \ref{Thm-rhouE-Existence}. Let $f_{R,\eps}$ and $h_{R,\eps}$ be defined as in \eqref{Reminder-f} and \eqref{Reminder-h} and $\lambda_0>0$ be as in the coercivity estimate \eqref{coe-L}. Then there exist two constants $\eps_1>0$ and $C=C(\mu,F_1,F_2,F_3)>0$ such that for all $\eps<\eps_1$ and $t\in [0,\tau]$,
  \begin{equation}\label{L2-Es}
    \begin{aligned}
    &\frac{1}{2}\frac{\d}{\d t} \|f_{R,\eps}(t)\|_2^2+\frac{\lambda_0}{2\eps} \|\h f_{R,\eps}(t)\|_{\nu(\mu)}^2\\
    &\qquad\leqslant C\left(1+\sqrt{\eps}\left\|\eps^{3/2} h_{R,\eps}\right\| _\infty\right)\left(\|f_{R,\eps}\|_2+\|f_{R,\eps}\|_2^2\right)+C\eps^2 \left\|\eps^{3/2}h_{R,\eps} \right\|_\infty^2\|f_{R,\eps}\|_2^2.
    \end{aligned}
  \end{equation}
\end{lemma}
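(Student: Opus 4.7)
The plan is to divide the remainder equation \eqref{Rem-eq} by $\sqrt{\mu(1-\mu)}$ to obtain an evolution equation for $f_{R,\eps}$, then test against $f_{R,\eps}$ in $L^2_{x,v}$, extract the dissipation $\tfrac{\lambda_0}{\eps}\|\h f_{R,\eps}\|_{\nu(\mu)}^2$ from the coercivity \eqref{coe-L}, and match each term on the right-hand side of \eqref{Rem-eq} against one of the terms on the right-hand side of \eqref{L2-Es}. The guiding principle is: linear-in-$F_{R,\eps}$ terms (bilinear with $F_i$, trilinear with two $F_i$-slots) and the inhomogeneous $R_\eps$ produce the $C(1)\cdot(\|f_{R,\eps}\|_2+\|f_{R,\eps}\|_2^2)$ part; the genuinely nonlinear contributions in $F_{R,\eps}$ are handled by freezing one or two copies of $F_{R,\eps}$ in the $L^\infty$ norm via $h_{R,\eps}$, generating the $\sqrt{\eps}\,\|\eps^{3/2}h_{R,\eps}\|_\infty$ and $\eps^2\|\eps^{3/2}h_{R,\eps}\|_\infty^2$ factors.

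After dividing by $\sqrt{\mu(1-\mu)}$ and applying the chain rule, the transport operator picks up a lower-order coefficient $\tfrac{(\partial_t+v\cdot\nabla_x)\sqrt{\mu(1-\mu)}}{\sqrt{\mu(1-\mu)}}$, which is polynomial in $v$ but uniformly bounded in $(t,x)$ by Proposition \ref{Prop-fT-Es}; testing against $f_{R,\eps}$ and splitting $f_{R,\eps}=\P f_{R,\eps}+\h f_{R,\eps}$, this contribution is absorbed into $C\|f_{R,\eps}\|_2^2$ plus a small fraction of the dissipation (the fluid part $\P f_{R,\eps}$ carries Gaussian decay and therefore controls the polynomial weight). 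The $\tfrac{1}{\eps}$-linear term yields $\tfrac{\lambda_0}{\eps}\|\h f_{R,\eps}\|_{\nu(\mu)}^2$ by \eqref{coe-L}. The inhomogeneity $R_\eps$, built only from the smooth expansion terms $F_n$ with the Gaussian-type decay of Proposition \ref{Prop-F-n-Es}, satisfies $\|R_\eps/\sqrt{\mu(1-\mu)}\|_2\leqslant C$ uniformly in $\eps$, producing the clean $C\|f_{R,\eps}\|_2$ factor. The bilinear contributions $\sum_{i=1}^{3}\eps^{i-1}(\Q(F_i,F_{R,\eps})+\Q(F_{R,\eps},F_i))$ and the trilinear ones in which two of the slots are filled by $F_0,\ldots,F_3$ can be rewritten through $\widetilde{\Q}$ and $\widetilde{\T}$ (cf.\ \eqref{Q-til-op}--\eqref{T-til-op}) and, by the $\Q$-$\T$ bilinear estimates alluded to in the paper and the decay $|F_n|\leqslant C\mu^{(1+q)/2}$, bounded by $C\|f_{R,\eps}\|_{\nu(\mu)}\|f_{R,\eps}\|_2$, from which the $\h$-piece is absorbed into the dissipation and the $\P$-piece gives $C\|f_{R,\eps}\|_2^2$.

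The main obstacle is handling the terms genuinely nonlinear in $F_{R,\eps}$ while matching the prefactors so that they fit the structure of \eqref{L2-Es}. For each such term I would replace one or two copies of $F_{R,\eps}=h_{R,\eps}\sqrt{\mu_F(1-\mu_F)}/w$, use the comparison $\sqrt{\mu_F(1-\mu_F)}\leqslant C\sqrt{\mu(1-\mu)}/\sqrt{\mu_F^{1-\alpha}}$ afforded by \eqref{mu-mu-F} together with $l\geqslant 9$ so that $1/w$ provides enough polynomial decay to secure $v_*$-integrability inside $\widetilde{\Q}$ or $\widetilde{\T}$, and pull $\|h_{R,\eps}\|_\infty$ outside the integral. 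Concretely: the quadratic term $\eps^2\Q(F_{R,\eps},F_{R,\eps})$ and each trilinear term with exactly one smooth slot (prefactors $\eps^{i+2}$ with $i\in\{0,1,2,3\}$ acting on a $\T(F_i,F_{R,\eps},F_{R,\eps})$-type expression) produce, after pairing with $f_{R,\eps}$ and Cauchy--Schwarz against the dissipation, contributions of the form $\eps^2\|h_{R,\eps}\|_\infty\|f_{R,\eps}\|_2^2$ and $\eps^2\|h_{R,\eps}\|_\infty\|f_{R,\eps}\|_2$; rewriting $\eps^2\|h_{R,\eps}\|_\infty=\sqrt{\eps}\cdot\|\eps^{3/2}h_{R,\eps}\|_\infty$ matches these to the middle terms of \eqref{L2-Es}. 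The cubic remainder $-\eps^5\T(F_{R,\eps},F_{R,\eps},F_{R,\eps})$ is treated by absorbing two copies in $L^\infty$, producing $\eps^5\|h_{R,\eps}\|_\infty^2\|f_{R,\eps}\|_2^2=\eps^2\|\eps^{3/2}h_{R,\eps}\|_\infty^2\|f_{R,\eps}\|_2^2$, exactly the last term of \eqref{L2-Es}.

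The bookkeeping to verify — and this is where I expect the actual work to be — is that every trilinear combination indexed by $i+j\leqslant 6$ with $i,j\leqslant 3$ in \eqref{Rem-eq}, once one or two remainder factors are taken in $L^\infty$, distributes its $\eps^{i+j-1}$ (or $\eps^{i+2}$, or $\eps^5$) prefactor into one of the allowed shapes $\sqrt{\eps}\,\|\eps^{3/2}h_{R,\eps}\|_\infty$ or $\eps^2\|\eps^{3/2}h_{R,\eps}\|_\infty^2$, and that the remaining $v_*$-integral is uniformly bounded (the worst case being the quadratic $\eps^2\Q(F_{R,\eps},F_{R,\eps})$, where both factors are unknown and the decay must come entirely from the weight $w$ and the collision kernel). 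Once these estimates are assembled, choosing $\eps$ small enough to absorb the fraction of $\|\h f_{R,\eps}\|_{\nu(\mu)}^2$ arising from the transport correction and the $\h$-pieces into the $\tfrac{\lambda_0}{\eps}$-dissipation leaves the lower bound $\tfrac{\lambda_0}{2\eps}\|\h f_{R,\eps}\|_{\nu(\mu)}^2$ and yields \eqref{L2-Es}.
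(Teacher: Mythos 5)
Your overall strategy — divide by $\sqrt{\mu(1-\mu)}$, test against $f_{R,\eps}$, extract the coercivity, absorb the linear collision terms into the dissipation, and convert the genuinely nonlinear remainder slots to $h_{R,\eps}$ — matches the paper's proof, and your handling of the $\widetilde{\Q}/\widetilde{\T}$ terms, the inhomogeneity $\widetilde{R}_\eps$, and the cubic remainder is essentially correct. But there is a concrete gap in your treatment of the lower-order transport-correction coefficient.

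You claim that the contribution of the coefficient $\varsigma=\tfrac{(\partial_t+v\cdot\nabla_x)\mu}{\mu(1-\mu)}$ (which is degree $3$ in $v$) is ``absorbed into $C\|f_{R,\eps}\|_2^2$ plus a small fraction of the dissipation,'' with the $\P f_{R,\eps}$ part handled by Gaussian decay and the $\h f_{R,\eps}$ part absorbed. This fails for the kinetic part as stated: the quantity $\iint |\h f_{R,\eps}|^2 (1+|v|)^3\, \d v\, \d x$ cannot be controlled by any fraction (not even a fraction $\eta/\eps$) of the dissipation $\tfrac{1}{\eps}\iint |\h f_{R,\eps}|^2 (1+|v|)\, \d v\, \d x$, because the weight mismatch $(1+|v|)^2$ is unbounded in $v$. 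The correct treatment (as in the paper) is to cut off at $|v|\leqslant\kappa/\sqrt{\eps}$: on the low-velocity region the extra weight is $\leqslant\kappa^2/\eps$ and can be absorbed into a small multiple of $\tfrac{1}{\eps}\|\h f_{R,\eps}\|_{\nu(\mu)}^2$; on the high-velocity region one must pass to $L^\infty$ via $h_{R,\eps}$, which, after accounting for the weight decay in $w(v)=(1+|v|)^l$ and $\mu_F^{(1-\alpha)/2}$, produces a term of size $\eps^2\|h_{R,\eps}\|_\infty\|f_{R,\eps}\|_2 = \sqrt{\eps}\,\|\eps^{3/2}h_{R,\eps}\|_\infty\|f_{R,\eps}\|_2$. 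This is exactly the contribution $C\sqrt{\eps}\|\eps^{3/2}h_{R,\eps}\|_\infty\|f_{R,\eps}\|_2$ in \eqref{L2-Es} that is \emph{linear} in $\|f_{R,\eps}\|_2$; your roadmap attributes all $\sqrt{\eps}\|\eps^{3/2}h_{R,\eps}\|_\infty$ factors to the quadratic/cubic nonlinearities, but those produce only contributions that are \emph{quadratic} in $\|f_{R,\eps}\|_2$ (and those with $\|\eps^{3/2}h_{R,\eps}\|_\infty^2$ arise only from the pure cubic). The linear-in-$\|f_{R,\eps}\|_2$, $L^\infty$-weighted piece cannot be produced by your plan, so you would not be able to close the estimate for the transport-correction term. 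Everything else in your outline — the orthogonality of the symmetrized $\widetilde{\Q},\widetilde{\T}$ pairings with $Null(\L)$, the Young/Cauchy–Schwarz split with $\eps$, the conversion $\eps^2\|h_{R,\eps}\|_\infty=\sqrt{\eps}\,\|\eps^{3/2}h_{R,\eps}\|_\infty$ and $\eps^5\|h_{R,\eps}\|_\infty^2=\eps^2\|\eps^{3/2}h_{R,\eps}\|_\infty^2$ — is consistent with the paper.
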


\begin{lemma}[$L^\infty$-Estimates]\label{Lem-L-inf-Es}
  Under the same assumptions as in Lemma \ref{Lem-L2-Es}, there are constants $\eps_2>0$ and $C=C(\mu, F_1,F_2,F_3)>0$ such that for all $\eps<\eps_2$ and $t\in [0,\tau]$,
  \begin{equation}\label{L-inf-Es}
    \sup_{s\in [0,t]}\|\eps^{3/2}h_{R,\eps}(s)\|_\infty \leqslant C \big(\|\eps^{3/2}h_{R,\eps}(0)\|_\infty+\sup_{s\in [0,\tau]}\|f_{R,\eps}(s)\|_2+\eps^{5/2}\big).
  \end{equation}
\end{lemma}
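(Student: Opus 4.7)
The plan is to convert the remainder equation \eqref{Rem-eq} into a closed transport--absorption equation for $h_{R,\eps}$, integrate along backward characteristics, and close the bound via a Vidav-type double Duhamel iteration in the spirit of \cite{Guo-2010ARMA,GJJ-2010CPAM}. First I would multiply \eqref{Rem-eq} by $w(v)/\sqrt{\mu_F(1-\mu_F)}$, use the decomposition $\L=\nu(\mu)\,\mathrm{Id}-\K$ acting on $F_{R,\eps}/\sqrt{\mu(1-\mu)}$, and employ $\mu\sim\mu_F^\alpha$ from \eqref{mu-mu-F} to rewrite the $\K$-piece as a weighted kernel operator $\K_w h_{R,\eps}$. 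This yields
\begin{equation*}
\partial_t h_{R,\eps} + v\cdot\nabla_x h_{R,\eps} + \frac{\nu(\mu)}{\eps}h_{R,\eps} - \frac{1}{\eps}\K_w h_{R,\eps} = \mathcal{S}_{R,\eps},
\end{equation*}
where $\mathcal{S}_{R,\eps}$ collects the bilinear and trilinear contributions on the right-hand side of \eqref{Rem-eq} together with the source $R_\eps$, all conjugated by $w/\sqrt{\mu_F(1-\mu_F)}$. Integrating along $X(s;t,x,v)=x-(t-s)v$ and exploiting $\nu(\mu)\gtrsim 1+|v|$ from Proposition \ref{Prop-L-proper} gives a Duhamel representation of $h_{R,\eps}(t,x,v)$ as the initial datum plus the integrated $\K_w$-term and $\mathcal{S}_{R,\eps}$, each damped by $\exp\{-\int_s^t \nu/\eps\,d\tau\}$.

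The crux of the argument is the $\eps^{-1}\K_w h_{R,\eps}$ term. I would substitute the Duhamel formula back into $\K_w$, producing a double integral in $(s,s')$ against a kernel $k_w(v,v')$ of Grad type. Splitting the $v'$-domain into $\{|v'|\geqslant N\}$, where the decay of $k_w$ combined with the damping $e^{-\nu s/\eps}$ makes the contribution small, and $\{|v'|\leqslant N\}$, where Cauchy--Schwarz together with the standard change of variables with non-degenerate Jacobian off the grazing set converts the pointwise bound into $\|f_{R,\eps}\|_2$, one obtains
\begin{equation*}
\Big|\tfrac{1}{\eps}\int_0^t e^{-\int_s^t \nu/\eps}\K_w h_{R,\eps}\,ds\Big| \leqslant \eta\sup_{[0,t]}\|h_{R,\eps}\|_\infty + C_\eta \sup_{[0,t]}\|f_{R,\eps}\|_2
\end{equation*}
for arbitrarily small $\eta>0$, with the first term absorbed into the left-hand side. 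This is the principal obstacle, since it requires careful kernel estimates for the BFD weighted operator $\K_w$, obtained by adapting Proposition \ref{Prop-L-proper}(3) and the classical Boltzmann hard-sphere change-of-variables argument to the Fermi-Dirac weight where both $\mu$ and $1-\mu$ enter.

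For $\mathcal{S}_{R,\eps}$, the terms linear in $F_{R,\eps}$ (with prefactors $\eps^{i-1}$, $i=1,2,3$) are pointwise controlled by $C\nu(\mu)|h_{R,\eps}|$ using the Gaussian-type decay of $F_i$ from Proposition \ref{Prop-F-n-Es}, hence absorbed by a fraction of the $\nu/\eps$ damping once $\eps$ is sufficiently small. The quadratic and cubic terms in $F_{R,\eps}$ are bounded pointwise by $C\eps^{2}\|h_{R,\eps}\|_\infty^{\,2}$ and $C\eps^{5}\|h_{R,\eps}\|_\infty^{\,3}$; multiplication by $\eps^{3/2}$ and the a priori smallness of $\eps^{3/2}\|h_{R,\eps}\|_\infty$ makes these negligible. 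Finally, $R_\eps$ is $O(1)$ in $\eps$ with Gaussian decay in $v$ (again by Proposition \ref{Prop-F-n-Es}), so the Duhamel integral $\int_0^t e^{-\nu(t-s)/\eps}\,ds \lesssim \eps/\nu$ produces an $O(\eps)$ contribution to $\|h_{R,\eps}\|_\infty$, i.e.\ an $O(\eps^{5/2})$ contribution to $\|\eps^{3/2}h_{R,\eps}\|_\infty$, accounting for the last term in \eqref{L-inf-Es}. Taking the supremum over $s\in[0,t]$ and collecting all bounds closes the estimate.
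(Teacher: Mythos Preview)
Your outline matches the paper's proof almost exactly: rewrite \eqref{Rem-eq} for $h_{R,\eps}$, apply Duhamel, bound the source terms using Proposition~\ref{Prop-F-n-Es}, and treat the $\K_w$ piece by a second Duhamel iteration with a large/small velocity splitting followed by a change of variables that produces $\|f_{R,\eps}\|_2$. The overall architecture is correct.

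There is, however, a scaling slip in your displayed bound for the $\K_w$ contribution. As written, your inequality
\[
\Big|\tfrac{1}{\eps}\int_0^t e^{-\int_s^t \nu/\eps}\,\K_w h_{R,\eps}\,ds\Big| \leqslant \eta\sup_{[0,t]}\|h_{R,\eps}\|_\infty + C_\eta \sup_{[0,t]}\|f_{R,\eps}\|_2
\]
is too strong; the correct bound carries an extra factor $\eps^{-3/2}$ in front of $\|f_{R,\eps}\|_2$. The reason is that the relevant change of variables is not an angular one ``off the grazing set'' but the spatial substitution $v_1\mapsto y=x-v(t-s)-v_1(s-s_1)$ in the inner Duhamel integral, whose Jacobian is $(s-s_1)^3$. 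To keep this Jacobian bounded below one must first split off the short-time region $s-s_1\leqslant \kappa_*\eps$ (contributing another small multiple of $\sup\|h_{R,\eps}\|_\infty$), and on $s-s_1\geqslant \kappa_*\eps$ the Jacobian bound yields exactly $C_\eta\,\eps^{-3/2}\|f_{R,\eps}\|_2$. This $\eps^{-3/2}$ is precisely what forces the final inequality to be stated for $\eps^{3/2}\|h_{R,\eps}\|_\infty$ rather than $\|h_{R,\eps}\|_\infty$; with your inequality as written the lemma would come out with a spurious extra $\eps^{3/2}$ in front of $\|f_{R,\eps}\|_2$, which would break the interplay with Lemma~\ref{Lem-L2-Es}.
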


\begin{proof}[\bf Proof of Theorem \ref{Thm-Euler-limit}]
  Based on Lemmas \ref{Lem-L2-Es} and \ref{Lem-L-inf-Es}, we acquire for some $C_0>0$
  \begin{equation*}
  \begin{aligned}
  &\frac{1}{2}\frac{\d}{\d t} \|f_{R,\eps}(t)\|_2^2+\frac{\lambda_0}{2\eps} \|\h f_{R,\eps}(t)\|_{\nu(\mu)}^2\\
  &\quad\leqslant  C_0 (1+\|f_{R,\eps}\|_2)\|f_{R,\eps}\|_2\left(1+\sqrt{\eps}\left\|\eps^{3/2} h_{R,\eps}\right\| _\infty+\eps^2\left\|\eps^{3/2} h_{R,\eps}\right\| _\infty^2\right)\\
  &\quad\leqslant C_0(1+\|f_{R,\eps}\|_2)\|f_{R,\eps}\|_2\Bigg\{1+\left(1+\eps\sup_{s\in [0,\tau]} \left\|\eps^{3/2} h_{R,\eps}\right\| _\infty\right)\times\\
  &\qquad\qquad\left(\sqrt{\eps}\|\eps^{3/2}h_{R,\eps}(0)\|_\infty+\sqrt{\eps}\sup_{s\in [0,\tau]}\|f_{R,\eps}(s)\|_2+\eps^3\right)\Bigg\}.
  \end{aligned}
  \end{equation*}
  For bounded $\dis\sup_{0\leqslant s \leqslant \tau}\|f_{R,\eps}(s)\|_2$ and for $\eps$ small enough, the Gr\"{o}nwall's inequality indicates
\begin{equation*}
  1+\|f_{R,\eps}(t)\|_2 \leqslant (1+\|f_{R,\eps}(0)\|_2)\exp\left\{3 C_0 \tau\cdot(1+\sqrt{\eps}\|\eps^{3/2}h_{R,\eps}(0)\|_\infty+\sqrt{\eps}\sup_{s\in [0,\tau]}\|f_{R,\eps}(s)\|_2)\right\}.
\end{equation*}
  Employing the Taylor expansion of the exponential function in the above inequality, we arrive at
  \begin{equation*}
    \sup_{0\leqslant t \leqslant \tau}\|f_{R,\eps}(t)\|_2\leqslant C_\tau (1+\|f_{R,\eps}(0)\|_2 + \|\eps^{3/2}h_{R,\eps}(0)\|_2).
  \end{equation*}
  Combining with \eqref{L-inf-Es}, we obtain
  \begin{equation*}
    \sup_{s\in [0,t]}\|\eps^{3/2}h_{R,\eps}(s)\|_\infty \leqslant C_\tau (1+\|f_{R,\eps}(0)\|_2 + \|\eps^{3/2}h_{R,\eps}(0)\|_2).
  \end{equation*}
  We complete the Theorem as announced.
\end{proof}

\subsection{$L^2$-Estimate: Proof of Lemma \ref{Lem-L2-Es}.}  Inserting \eqref{Reminder-f} into \eqref{Rem-eq}, we get that
\begin{equation}\label{Rem-f-eq}
  \begin{aligned}
    &(\partial_t +v\cdot \nabla_x)f_{R,\eps}+\frac{1}{\eps}\L f_{R,\eps}\\
    &\quad\qquad=-\frac{(\partial_t +v\cdot \nabla_x)(\,\sqrt{\mu(1-\mu)}\,)}{\sqrt{\mu(1-\mu)}}f_{R,\eps}+\sum_{i=1}^{3}\eps^{i-1} \Big(\,\widetilde{\Q}(f_i,f_{R,\eps})+\widetilde{\Q}(f_{R,\eps},f_i)\,\Big) +\eps^2\widetilde{\Q}(f_{R,\eps},f_{R,\eps})\\
    &\qquad\qquad\qquad-\sum_{i+j=1,\,i,j\leqslant 3}^{6}\eps^{i+j-1} \Big(\,\widetilde{\T}(f_{R,\eps},f_i,f_j)+\widetilde{\T}(f_i,f_{R,\eps},f_j)+\widetilde{\T}(f_i,f_j,f_{R,\eps})\,\Big)\\
    &\qquad\qquad\qquad\qquad-\sum_{i=0}^{3}\eps^{i+2}\Big(\,\widetilde{\T}(f_i,f_{R,\eps},f_{R,\eps})+\widetilde{\T}(f_{R,\eps},f_i,f_{R,\eps})+\widetilde{\T}(f_{R,\eps},f_{R,\eps},f_i)\,\Big)\\
    &\qquad\qquad\qquad\qquad\qquad-\eps^5\widetilde{\T}(f_{R,\eps},f_{R,\eps},f_{R,\eps})+\widetilde{R}_\eps,
  \end{aligned}
\end{equation}
where
\begin{equation*}
  \begin{aligned}
  \widetilde{\Q}(f,g)=&\frac{1}{\sqrt{\mu(1-\mu)}}\Q(\sqrt{\mu(1-\mu)}f,\,\sqrt{\mu(1-\mu)}g)\\
  =&\iint\limits_{\R^3\times\S^2}|(v-v_*)\cdot\omega|\frac{\sqrt{\N}}{\sqrt{\mu(1-\mu)}} \left[\, \frac{\sqrt{(1-\mu_*^\prime)(1-\mu^\prime)}}{\sqrt{(1-\mu_*)(1-\mu)}} f_*^\prime g^\prime- \frac{\sqrt{(1-\mu_*)(1-\mu)}}{\sqrt{(1-\mu_*^\prime)(1-\mu^\prime)}} f_* g\,\right]\d \omega\d v_*,
  \end{aligned}
\end{equation*}
\begin{equation*}
  \begin{aligned}
  \widetilde{\T}(f,g,h)=&\frac{1}{\sqrt{\mu(1-\mu)}}\T(\sqrt{\mu(1-\mu)}f,\,\sqrt{\mu(1-\mu)}g,\,\sqrt{\mu(1-\mu)}h)\\
  =&\iint\limits_{\R^3\times\S^2}|(v-v_*)\cdot\omega|\frac{\sqrt{\N}}{\sqrt{\mu(1-\mu)}} \Big[\, \frac{\sqrt{(1-\mu_*^\prime)(1-\mu^\prime)}}{\sqrt{(1-\mu_*)(1-\mu)}}f_*^\prime g^\prime\left(\,\sqrt{\mu_*(1-\mu_*)}h_*+\sqrt{\mu(1-\mu)}h\,\right)\\
  &\hspace{3cm}- \frac{\sqrt{(1-\mu_*)(1-\mu)}}{\sqrt{(1-\mu_*^\prime)(1-\mu^\prime)}} f_* g \left(\,\sqrt{\mu_*^\prime(1-\mu_*^\prime)}h_*^\prime+\sqrt{\mu^\prime(1-\mu^\prime)}h^\prime\,\right)\,\Big]\d \omega\d v_*,
  \end{aligned}
\end{equation*}
and
\begin{equation*}
  \begin{aligned}
  \widetilde{R}_\eps&=\sum_{\substack{i+j\geqslant 4\\i,j\leqslant 3}}\eps^{i+j-4}\widetilde{\Q}(f_i,f_j)-\sum_{\substack{i+j+k\geqslant 4\\i,j,k\leqslant 3}}\eps^{i+j+k-4}\widetilde{\T}(f_i,f_j,f_k)-\frac{(\partial_t +v\cdot \nabla_x)F_3}{\sqrt{\mu(1-\mu)}},\\
  f_n&=\frac{F_n}{\sqrt{\mu(1-\mu)}},\;n=1,\,2,\quad f_3=\h\frac{F_3}{\sqrt{\mu(1-\mu)}}.
  \end{aligned}
\end{equation*}
Computing directly gives us
\begin{equation*}
  \begin{aligned}
  \varsigma:= \frac{(\partial_t +v\cdot \nabla_x)\mu}{\mu(1-\mu)} = & \left[\,(\nabla_x \u):\frac{(v-\u)\otimes(v-\u)}{T}-(\nabla_x\cdot\u)\frac{|v-\u|^2}{T}\,\right] \\ &+\left(\frac{|v-\u|^2}{2T}-\frac{5\E}{2\rho T}\right)\frac{v-\u}{\sqrt{T}}\cdot\frac{\nabla_x T}{\sqrt{T}},
  \end{aligned}
\end{equation*}
which behaves like $|v|^3$ as $|v|\to +\infty$.

Now we are ready to establish $L^2$-estimates on the remainder equation \eqref{Rem-f-eq}. Taking $L_\mu^2$ inner products with $f_{R,\eps}$, the remainder equation \eqref{Rem-f-eq} yields:
\begin{equation}\label{L2-Es-1}
  \begin{aligned}
  \frac{1}{2}\frac{\d}{\d t}\|f_{R,\eps}\|_2^2+ \frac{\lambda_0}{\eps}\|\h f_{R,\eps}\|_{\nu(\mu)}^2\leqslant & -\frac{1}{2}\iint_{\R^3\times\R^3} |f_{R,\eps}|^2(1-2\mu)\,\varsigma\,\d v \d x\\
  &\qquad+\left\langle\,\mathrm{W},\,f_{R,\eps}\,\right\rangle+\left\langle\,\widetilde{R}_\eps,\,f_{R,\eps}\,\right\rangle,
  \end{aligned}
\end{equation}
where we have used
\begin{equation*}
  \frac{1}{\eps}\iint_{\R^3\times\R^3}\L(f)f\d v \d x\geqslant \frac{\lambda_0}{\eps}\|\h f\|_{\nu(\mu)}^2,
\end{equation*}
and the notation
\begin{equation}\label{W}
  \begin{aligned}
  \mathrm{W}=&\sum_{i=1}^{3}\eps^{i-1} \Big(\,\widetilde{\Q}(f_i,f_{R,\eps})+\widetilde{\Q}(f_{R,\eps},f_i)\,\Big) +\eps\widetilde{\Q}(f_{R,\eps},f_{R,\eps})\\
  &-\sum_{i+j=1,i,j\leqslant 3}^{6}\eps^{i+j-1}\Big(\,\widetilde{\T}(f_{R,\eps},f_i,f_j)+\widetilde{\T}(f_i,f_{R,\eps},f_j)+\widetilde{\T}(f_i,f_j,f_{R,\eps})\,\Big)\\
  &\quad-\sum_{i=0}^{3}\eps^{i+2}\Big(\,\widetilde{\T}(f_i,f_{R,\eps},f_{R,\eps})+\widetilde{\T}(f_{R,\eps},f_i,f_{R,\eps})+\widetilde{\T}(f_{R,\eps},f_{R,\eps},f_i)\,\Big) -\eps^5\widetilde{\T}(f_{R,\eps},f_{R,\eps},f_{R,\eps}).
  \end{aligned}
\end{equation}
Noting the definition of $\varsigma$ and the fact $\rho,\,T$ have positive lower bounds, the first term on the right hand side of \eqref{L2-Es-1} can be bounded by
\begin{equation}\label{L2-Es-2}
  \begin{aligned}
    &C\iint_{\R^3\times\R^3} |f_{R,\eps}|^2(1+|v|)^3\,|\nabla_x(\rho,\u,T)|\, (\mathbbm{l}_{|v|\geqslant\frac{\kappa}{\sqrt{\eps}}}+\mathbbm{l}_{|v|\leqslant\frac{\kappa}{\sqrt{\eps}}})\d v \d x\\
    \leqslant &C \left\|f_{R,\eps} {\color{red} (1+|v|)^5 }
    \mathbbm{l}_{|v|\geqslant\frac{\kappa}{\sqrt{\eps}}}\right\|_\infty \cdot \iint_{\R^3\times\R^3}|f_{R,\eps}|\,|\nabla_x(\rho,\u,T)|(1+|v|)^{-2}\d v \d x\\
    &\qquad+C|\nabla_x(\rho,\u,T)|_{L_x^\infty}\iint_{\R^3\times\R^3} \left(|\P f_{R,\eps}|^2+|\h f_{R,\eps}|^2\right)(1+|v|^3) \mathbbm{l}_{|v|\leqslant\frac{\kappa}{\sqrt{\eps}}}\d v \d x\\
    \leqslant &C \left\|f_{R,\eps} {\color{red} (1+|v|)^5 } \mathbbm{l}_{|v|\geqslant\frac{\kappa}{\sqrt{\eps}}}\right\|_\infty \cdot \|f_{R,\eps}\|_2\left(\int_{\R^3_x}|\nabla_x(\rho,\u,T)|^2\int_{\R^3_v}(1+|v|)^{-4}\d v \d x\right)^{1/2}\\
    &\qquad+C|\nabla_x(\rho,\u,T)|_{L_x^\infty}\left(\|f_{R,\eps}\|_2^2+\frac{\kappa^2}{\eps}\|\h f_{R,\eps}\|_{\nu(\mu)}^2\right)\\
    \leqslant & C \left\| {\color{red} (1+|v|)^{5-l} } \frac{\mu_F}{\mu}h_{R,\eps}\mathbbm{l}_{|v|\geqslant\frac{\kappa}{\sqrt{\eps}}}\right\|_\infty \|f_{R,\eps}\|_2+C\left(\|f_{R,\eps}\|_2^2+\frac{\kappa^2}{\eps}\|\h f_{R,\eps}\|_{\nu(\mu)}^2\right)\\
    \leqslant& \frac{C}{\kappa^4}\eps^2 \left\|h_{R,\eps}\right\| _\infty \|f_{R,\eps}\|_2+C\|f_{R,\eps}\|_2^2+\frac{C\kappa^2}{\eps}\|\h f_{R,\eps}\|_{\nu(\mu)}^2,
  \end{aligned}
\end{equation}
for any $\kappa>0$.

Now we treat the term $\left\langle\,\mathrm{W},\,f_{R,\eps}\,\right\rangle$. From Jiang-Xiong-Zhou \cite{Jiang-Xiong-Zhou-2021}, we know that (see the estimates for $\Q_1$, $\Q_2$, $\T_7$, $\T_8$ in Lemma 2.1)
\begin{equation}\label{Q-Es}
  \begin{aligned}
  \left|\big\langle \widetilde{\Q}(f,g),\widetilde{h} \big\rangle\right| \leqslant & C\int_{\R^3_x}\left(|\nu^{1/2}(\mu)f|_{L_v^2}|g|_{L_v^2}+|f|_{L_v^2}|\nu^{1/2}(\mu)g|_{L_v^2}\right) |\nu^{1/2}(\mu)\widetilde{h}|_{L_v^2}\d x,\\
  \left|\big\langle \widetilde{\Q}(f,g),\widetilde{h} \big\rangle\right| \leqslant & C\int_{\R^3_x}\left(|\nu(\mu) f|_{L_v^2}|g|_{L_v^2}+|f|_{L_v^2}|\nu(\mu) g|_{L_v^2}\right) |\widetilde{h}|_{L_v^2}\d x,
  \end{aligned}
\end{equation}
and
\begin{equation}\label{T-Es}
  \begin{aligned}
  \left|\big\langle \widetilde{\T}(f,g,h),\widetilde{h} \big\rangle\right|\leqslant  & C\int_{\R^3_x}\left(|\nu^{1/2}(\mu)f|_{L_v^2}|g|_{L_v^2}+|f|_{L_v^2}|\nu^{1/2}(\mu)g|_{L_v^2}\right) |h|_{L_v^2}|\nu^{1/2}(\mu)\widetilde{h}|_{L_v^2}\d x,\\
  \left|\big\langle \widetilde{\T}(f,g,h),\widetilde{h} \big\rangle\right|\leqslant  & \int_{\R^3_x}\left(|\nu(\mu) f|_{L_v^2}|g|_{L_v^2}+|f|_{L_v^2}|\nu(\mu)g|_{L_v^2}\right)|h|_{L_v^2} |\widetilde{h}|_{L_v^2}\d x.
  \end{aligned}
\end{equation}
It should be pointed out that
\begin{equation*}
  \left\langle \widetilde{\Q}(f,g)+\widetilde{\Q}(g,f),[1,\,v,\,|v|^2]\sqrt{\mu(1-\mu)} \right\rangle=0,\quad \left\langle \widetilde{\T}(f,g,h)+\widetilde{\T}(g,f,h),[1,\,v,\,|v|^2]\sqrt{\mu(1-\mu)} \right\rangle=0,
\end{equation*}
then it follows that
\begin{equation*}
  \begin{aligned}
  &\left\langle \widetilde{\Q}(f,g)+\widetilde{\Q}(g,f),\widetilde{h}\right\rangle=\left\langle \widetilde{\Q}(f,g)+\widetilde{\Q}(g,f),\h \widetilde{h}\right\rangle,\\
  &\left\langle \widetilde{\T}(f,g,h)+\widetilde{\T}(g,f,h),\widetilde{h}\right\rangle=\left\langle \widetilde{\T}(f,g,h)+\widetilde{\T}(g,f,h),\h \widetilde{h}\right\rangle.
  \end{aligned}
\end{equation*}
For each $i=1,2$, we obtain by \eqref{Q-Es} and the statements above that
\begin{equation}\label{L2-Es-3}
  \begin{aligned}
  &\left\langle \,\widetilde{\Q}(f_i,f_{R,\eps})+\widetilde{\Q}(f_{R,\eps},f_i),\, f_{R,\eps} \, \right\rangle\\
  \leqslant & C\int_{\R^3_x}\left( \, \left|f_i\right|_{\nu(\mu)}\left|f_{R,\eps}\right|_{L_v^2} + \left|f_i\right|_{L_v^2}\left|f_{R,\eps}\right|_{\nu(\mu)}\, \right) \left|\h f_{R,\eps}\right|_{\nu(\mu)} \d x\\
  \leqslant & C \left( \, \left\|(1+|v|)^{5/2} f_i \right\|_\infty \left\|f_{R,\eps}\right\|_2  + \left\|(1+|v|)^2 f_i \right\|_\infty \left\|f_{R,\eps}\right\|_{\nu(\mu)} \, \right) \left\|\h f_{R,\eps}\right\|_{\nu(\mu)}\\
  \leqslant & C \left\|(1+|v|)^{5/2} f_i \right\|_\infty \left( \,\left\|\P f_{R,\eps}\right\|_{\nu(\mu)} + \left\|\h f_{R,\eps}\right\|_{\nu(\mu)}\, \right) \left\|\h f_{R,\eps}\right\|_{\nu(\mu)}\\
  \leqslant & \frac{\lambda_0}{n\eps} \left\|\h f_{R,\eps}\right\|_{\nu(\mu)}^2 + C\eps\left\|(1+|v|)^{5/2} f_i \right\|_\infty^2 \left\| f_{R,\eps}\right\|_2^2 + C \left\|(1+|v|)^{5/2} f_i \right\|_\infty \left\|\h f_{R,\eps}\right\|_{\nu(\mu)}^2\\
  \leqslant & (1+\frac{Cn}{\lambda_0}\eps)\frac{\lambda_0}{n\eps}\left\|\h f_{R,\eps}\right\|_{\nu(\mu)}^2+ C \eps\left\| f_{R,\eps}\right\|_2^2,
  \end{aligned}
\end{equation}
where we have used the Cauchy-Schwarz's inequality with $\eps$ and the positive constant $n$ is to be determined. Similarly,
\begin{equation}\label{L2-Es-4}
   \eps^2\left\langle \,\widetilde{\Q}(f_{R,\eps},f_{R,\eps}) ,\, f_{R,\eps} \, \right\rangle \leqslant
   C\eps^2\|\nu(\mu)(1+|v|)^2 f_{R,\eps}\|_\infty\|f_{R,\eps}\|_2^2=C\sqrt{\eps}\|\eps^{3/2} h_{R,\eps}\|_\infty\|f_{R,\eps}\|_2^2.
\end{equation}
For each integers $i,\,j$ with $1 \leqslant i+j \leqslant 4$, we obtain by \eqref{T-Es} that
\begin{equation}\label{L2-Es-5}
  \begin{aligned}
   &\left\langle\, \widetilde{\T}(f_{R,\eps},f_i,f_j)+\widetilde{\T}(f_i,f_{R,\eps},f_j) ,\, f_{R,\eps} \,\right\rangle\\
  \leqslant & C\int_{\R^3_x}\left( \, \left|f_i\right|_{\nu(\mu)}\left|f_{R,\eps}\right|_{L_v^2} + \left|f_i\right|_{L^2_\mu }\left|f_{R,\eps}\right|_{\nu(\mu)}\, \right) \left|f_j\right|_{L_v^2} \left|\h f_{R,\eps}\right|_{\nu(\mu)} \d x\\
  \leqslant & C \left\|(1+|v|)^{5/2} f_i \right\|_\infty \left|(1+|v|)^2 f_j \right\|_\infty  \left( \,\left\|\P f_{R,\eps}\right\|_{\nu(\mu)} + \left\|\h f_{R,\eps}\right\|_{\nu(\mu)}\, \right) \left\|\h f_{R,\eps}\right\|_{\nu(\mu)}\\
  \leqslant & \frac{\lambda_0}{n\eps} \left\|\h f_{R,\eps}\right\|_{\nu(\mu)}^2 + C \eps\left\|(1+|v|)^{5/2} f_i \right\|_\infty^2 \left\|(1+|v|)^2 f_j \right\|_\infty^2 \left\| f_{R,\eps}\right\|_2^2\\
  &\qquad+ C \left\|(1+|v|)^{5/2} f_i \right\|_\infty \left\|(1+|v|)^2 f_j \right\|_\infty \left\|\h f_{R,\eps}\right\|_{\nu(\mu)}^2\\
  \leqslant & (1+\frac{Cn}{\lambda_0}\eps)\frac{\lambda_0}{n\eps}\left\|\h f_{R,\eps}\right\|_{\nu(\mu)}^2+ C \eps\left\| f_{R,\eps}\right\|_2^2.
  \end{aligned}
\end{equation}
Similarly, we can also get
\begin{equation}\label{L2-Es-6}
  \begin{aligned}
   &\left\langle\, \widetilde{\T}(f_i,f_j,f_{R,\eps}) ,\, f_{R,\eps} \,\right\rangle\\
  \leqslant & C\left\|(1+|v|)^{5/2} f_i \right\|_\infty\left\|(1+|v|)^{5/2} f_j \right\|_\infty \iint_{\R^3\times\R^3}|f_{R,\eps}|_{L_v^2}|\h f_{R,\eps}|_{\nu(\mu)}\d v\d x\\
  \leqslant & \frac{\lambda_0}{n\eps}\left\|\h f_{R,\eps}\right\|_{\nu(\mu)}^2+ C \eps\left\| f_{R,\eps}\right\|_2^2.
  \end{aligned}
\end{equation}

\begin{equation}\label{L2-Es-7}
  \begin{aligned}
   &\eps^2\left\langle\,\widetilde{\T}(f_i,f_{R,\eps},f_{R,\eps}) +\widetilde{\T}(f_{R,\eps},f_i,f_{R,\eps})+\widetilde{\T}(f_{R,\eps},f_{R,\eps},f_i) ,\, f_{R,\eps} \,\right\rangle\\
  \leqslant & C\eps^2 \left\|(1+|v|)^3 f_i \right\|_\infty \left\|(1+|v|)^3 f_{R,\eps} \right\|_\infty \|f_{R,\eps}\|_2^2\\
  \leqslant & C\sqrt{\eps}\|\eps^{3/2}h_{R,\eps}\|_\infty\|f_{R,\eps}\|_2^2.
  \end{aligned}
\end{equation}
and
\begin{equation}\label{L2-Es-8}
   \begin{aligned}
   &\eps^5\left\langle\,\widetilde{\T}(f_{R,\eps},f_{R,\eps},f_{R,\eps}) ,\, f_{R,\eps} \,\right\rangle \leqslant C\eps^5 \iint_{\R^3\times\R^3}|\nu(\mu)f_{R,\eps}|_{L_v^2}|f_{R,\eps}|_{L_v^2}^3\d v\d x\\
   &\qquad\quad\leqslant C\eps^5 \left\|(1+|v|)^3 f_{R,\eps} \right\|_\infty^2 \|f_{R,\eps}\|_2^2 \leqslant C\eps^2 \left\|\eps^{3/2}h_{R,\eps} \right\|_\infty^2 \|f_{R,\eps}\|_2^2.
   \end{aligned}
\end{equation}
Noting \eqref{W}, we collect the estimates \eqref{L2-Es-3}-\eqref{L2-Es-8} to obtain
\begin{equation}\label{L2-W-f}
  \begin{aligned}
  \left|\left\langle \mathrm{W},f_{R,\eps} \right\rangle\right|\leqslant & 3(1+\frac{Cn}{\lambda_0}\eps)\frac{\lambda_0}{n\eps}\|\h f_{R,\eps}\|_{\nu(\mu)}^2\\
  &+\left( C+3C\eps+2C\sqrt{\eps}\|\eps^{3/2} h_{R,\eps}\|_\infty+C\eps^2 \left\|\eps^{3/2}h_{R,\eps} \right\|_\infty^2\right)\|f_{R,\eps}\|_2^2
  \end{aligned}
\end{equation}

For $\dis\widetilde{R}_\eps=\sum_{\substack{i+j\geqslant 4\\i,j\leqslant 3}}\eps^{i+j-4}\widetilde{\Q}(f_i,f_j)-\sum_{\substack{i+j+k\geqslant 4\\i,j,k\leqslant 3}}\eps^{i+j+k-4}\widetilde{\T}(f_i,f_j,f_k)-\frac{(\partial_t +v\cdot \nabla_x)F_3}{\sqrt{\mu(1-\mu)}}$, using \eqref{Q-Es} and \eqref{T-Es} again to get
\begin{equation*}
    \begin{aligned}
    \left\langle\, \widetilde{\Q}(f_i,f_j) ,\, f_{R,\eps} \,\right\rangle \leqslant & C\|f_{R,\eps}\|_2, \qquad \left\langle\, \widetilde{\T}(f_i,f_j,f_k) ,\, f_{R,\eps} \,\right\rangle \leqslant C\|f_{R,\eps}\|_2\\
    \left\langle\, \frac{(\partial_t +v\cdot \nabla_x)F_2}{\sqrt{\mu(1-\mu)}} ,\, f_{R,\eps} \,\right\rangle =& \iint_{\R^3 \times \R^3} (\partial_t +v\cdot \nabla_x)\Big[ \sqrt{\mu(1-\mu)} \h f_2 \Big] f_{R,\eps} \d v \d x\\
    &\leqslant C\left\| f_{R,\eps}\right\|_2.
    \end{aligned}
\end{equation*}
Therefore,
\begin{equation}\label{L2-R-f}
  \left|\langle \widetilde{R}_\eps, f_{R,\eps} \rangle\right|\leqslant C\|f_{R,\eps}\|_2.
\end{equation}

In summary, plugging \eqref{L2-Es-2}, \eqref{L2-W-f} and \eqref{L2-R-f} into \eqref{L2-Es-1} and taking $n=5$, $\kappa=\sqrt{\frac{\lambda_0}{10C}}$ and $\eps\leqslant \min\{\frac{\lambda_0}{20C},\,1\}=\eps_1$, we conclude that
\begin{equation*}
  \begin{aligned}
  \frac{1}{2}\frac{\d}{\d t}\|f_{R,\eps}\|_2^2 & + \frac{\lambda_0}{2\eps}\|\h f_{R,\eps}\|_{\nu(\mu)}^2\leqslant C \sqrt{\eps}\left\|\eps^{3/2} h_{R,\eps}\right\| _\infty \|f_{R,\eps}\|_2\\
  &+C\left(\eps+\sqrt{\eps}\|\eps^{3/2} h_{R,\eps}\|_\infty+\eps^2 \left\|\eps^{3/2}h_{R,\eps} \right\|_\infty^2\right)\|f_{R,\eps}\|_2^2+C\left(\|f_{R,\eps}\|_2+\|f_{R,\eps}\|_2^2\right)\\
  &\qquad\leqslant C\left(1+\sqrt{\eps}\left\|\eps^{3/2} h_{R,\eps}\right\| _\infty\right)\left(\|f_{R,\eps}\|_2+\|f_{R,\eps}\|_2^2\right)+C\eps^2 \left\|\eps^{3/2}h_{R,\eps} \right\|_\infty^2\|f_{R,\eps}\|_2^2.
  \end{aligned}
\end{equation*}

\rightline{$\qed$}

\subsection{$L^\infty$-Estimate: Proof of Lemma \ref{Lem-L-inf-Es}.}
Currently, we are going to establish the $L^\infty$-estimate for $h_{R,\eps}$. In terms of $\mu_F$ defined by \eqref{mu-F}, we define
\begin{equation*}
  \begin{aligned}
  \L_F g = &-\frac{1}{\sqrt{\mu_F(1-\mu_F)}}\Big[\,\Q(\mu,\sqrt{\mu_F(1-\mu_F)}g)+\Q(\sqrt{\mu_F(1-\mu_F)}g,\mu)\\ &\qquad-\T(\sqrt{\mu_F(1-\mu_F)}g,\mu,\mu)-\T(\mu,\sqrt{\mu_F(1-\mu_F)}g,\mu)-\T(\mu,\mu,\sqrt{\mu_F(1-\mu_F)}g)\,\Big]\\
  =&\nu(\mu) g-\K^F g,
  \end{aligned}
\end{equation*}
and $\K^F g=\K_2^F g-\K_1^F g$ with
\begin{equation*}
  \begin{aligned}
  \K_1^F g=&\iint_{\R^3\times\S^2} \frac{|(v-v_*)\cdot\omega|\N}{\sqrt{\mu_F(1-\mu_F)}}\frac{\sqrt{(\mu_F)_*(1-(\mu_F)_*)}}{\mu_*(1-\mu_*)}g_* \d \omega \d v_*,\\
  \K_2^F g=&\iint_{\R^3\times\S^2} \frac{|(v-v_*)\cdot\omega|\N}{\sqrt{\mu_F(1-\mu_F)}}\left(\frac{\sqrt{(\mu_F)_*^\prime(1-(\mu_F)_*^\prime)}} {\mu_*^\prime(1-\mu_*^\prime)}g_*^\prime +\frac{\sqrt{\mu_F^\prime(1-\mu_F^\prime)}}{\mu^\prime(1-\mu^\prime)}g^\prime
  \right) \d \omega \d v_*.
  \end{aligned}
\end{equation*}
\begin{lemma}
  It holds that $\dis(\K^F g)(v)=\int_{\R^3}l(v,v_1)g(v_1)\d v_1$, where the kernel $l(v,v_1)$ satisfies
  \begin{equation}\label{l-Es}
    |l(v,v_1)|\leqslant \frac{C}{|v-v_1|}e^{-c|v-v_1|^2},
  \end{equation}
  for some $c>0$.
\end{lemma}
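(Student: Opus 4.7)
The plan is to decompose $\K^F = \K_2^F - \K_1^F$ and to extract an explicit integral kernel for each of the two pieces, following the Carleman--Grad scheme already used for $\K_2 f$ at the end of the proof of Proposition~\ref{Prop-L-proper}(3). The presence of $\sqrt{\mu_F(1-\mu_F)}$ in the denominator of $\K^F$ (instead of $\sqrt{\mu(1-\mu)}$) is the only new feature, and it is absorbed using the bracketing \eqref{mu-mu-F}, namely $c_1\mu_F\leqslant\mu\leqslant c_2\mu_F^{\alpha}$ with some $\alpha\in(\tfrac12,1)$, combined with the fact that $1-\mu$, $1-\mu^\prime$, $1-\mu_*$ and $1-\mu_*^\prime$ all lie between two positive constants by \eqref{mu-Es}.

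For $\K_1^F$ the kernel appears directly after integrating out $\omega\in\S^2$. Using $\N=\mu_*\mu(1-\mu_*^\prime)(1-\mu^\prime)$ and $\int_{\S^2}|(v-v_*)\cdot\omega|\,\d\omega=\pi|v-v_*|$, one obtains schematically
$$
|l_1(v,v_*)| \;\leqslant\; C\,|v-v_*|\,\frac{\mu}{\sqrt{\mu_F}}\,\sqrt{(\mu_F)_*},
$$
once the bounded $(1-\cdot)$-factors are pulled out. Since $\mu/\sqrt{\mu_F}\leqslant c_2\mu_F^{\alpha-1/2}$ and $\mu_F$ behaves like a Gaussian of width $\sqrt{T_m}$, this yields $|l_1(v,v_*)|\leqslant C|v-v_*|e^{-c(|v|^2+|v_*|^2)}$. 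Splitting into the regions $\{|v-v_*|\leqslant 1\}$ and $\{|v-v_*|\geqslant 1\}$ upgrades this to $\tfrac{C}{|v-v_*|}\,e^{-c^\prime|v-v_*|^2}$, because on the first set $|v-v_*|\leqslant|v-v_*|^{-1}$ and on the second set the polynomial prefactor is absorbed into half of the exponential.

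For $\K_2^F$ I would apply the Carleman change of variables already used in the proof of Proposition~\ref{Prop-L-proper}(3): set $V=v_*-v$, decompose $V=V_{\p}+V_\bot$ with $V_{\p}=(V\cdot\omega)\omega\in\R^3$ and $V_\bot\in\R^2$, and use $\d\omega\,\d V = 2\,|V_{\p}|^{-2}\,\d V_\bot\,\d V_{\p}$. For the piece containing $g^\prime$ set $v_1=v^\prime=v+V_{\p}$ so that $V_{\p}=v_1-v$; for the piece containing $g_*^\prime$ set instead $v_1=v_*^\prime=v_*-V_{\p}$ and proceed symmetrically. Bounding the Fermi--Dirac factors by Gaussians via \eqref{mu-Es} and \eqref{mu-mu-F}, the completion-of-square identity
$$
-\tfrac{|v-\u|^2}{2}+\tfrac{|\eta-\u|^2}{2}-|\eta+V_\bot-\u|^2 \;=\; -|V_\bot+\zeta-\u|^2-\tfrac14|V_{\p}|^2
$$
from that same proof applies verbatim and, after performing the two-dimensional Gaussian integral in $V_\bot$, produces the factor $\exp(-c|V_{\p}|^2)$. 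The surviving prefactor $|V_{\p}|^{-1}=|v_1-v|^{-1}$ comes from cancelling the Jacobian $|V_{\p}|^{-2}$ against the $|V_{\p}|$ coming from $|(v-v_*)\cdot\omega|=|V_{\p}|$.

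Combining the two contributions produces the stated bound $|l(v,v_1)|\leqslant \tfrac{C}{|v-v_1|}\,e^{-c|v-v_1|^2}$. The principal difficulty is not conceptual but a careful bookkeeping of the $\mu$ versus $\mu_F$ exponents through the collision identity $\N=\mu_*\mu(1-\mu_*^\prime)(1-\mu^\prime)=\mu_*^\prime\mu^\prime(1-\mu_*)(1-\mu)$, so that the dominant exponential factors inherited from $\N$ continue, after the change of variables, to control a strictly negative quadratic form in $V_{\p}$; the condition $\alpha>\tfrac12$ in \eqref{mu-mu-F} is precisely what preserves this negativity in both the $\K_1^F$ kernel and in the translated $(v^\prime,v_*^\prime)$-variables entering the $\K_2^F$ kernel.
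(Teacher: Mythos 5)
Your proposal follows the same strategy as the paper: split $\K^F=\K_2^F-\K_1^F$, read off the kernel of $\K_1^F$ from the angular integration and bound it by a Gaussian-in-$(v,v_1)$ times $|v-v_1|$ using $\mu/\sqrt{\mu_F}\leqslant c_2\mu_F^{\alpha-1/2}$ and $\alpha>\tfrac12$, and treat $\K_2^F$ via the Carleman--Grad change of variables after reducing the two terms to one by the $(v',v_*')$--swap symmetry (which the paper delegates to Proposition 2.2 of \cite{Jiang-Xiong-Zhou-2021}, while you invoke it as ``proceed symmetrically''). One small inaccuracy: the completion-of-square identity from the $\K_2$ proof does not apply \emph{verbatim} to $\K_2^F$ --- there the surviving weight is $\mu_*'\sqrt{\mu_F'/\mu_F}$, which after the Carleman change reads $\mu(v+V_\bot)^{\phantom{'}}\sqrt{\mu_F(v+V_\p)/\mu_F(v)}$, a genuinely different quadratic form (it involves $v_*'=v+V_\bot$ rather than $v_*=v+V_\p+V_\bot$, and two distinct temperature scales $T,T_m$); completing the square afresh one finds the $|V_\p|^2$-coefficient is $-\tfrac{1}{2T_m}\big(\tfrac12-\tfrac{1}{4\alpha}\big)$, negative precisely when $\alpha>\tfrac12$, which is the point you correctly flag at the end, so this is a slip of wording rather than a gap in the argument.
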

\begin{proof}
  Obviously, $\dis \K_1^F g=\int_{\R^3}l_1(v,v_1)g(v_1)\d v_1$ with
  \begin{equation*}
    l_1(v,v_1)=\int_{\S^2}\frac{|(v-v_1)\cdot\omega|\N}{\sqrt{\mu_F(1-\mu_F)}} \frac{\sqrt{(\mu_F)_1(1-(\mu_F)_1)}}{\mu_1(1-\mu_1)}\d \omega,
  \end{equation*}
  where $(\mu_F)_1=\mu_F(v_1)$. It follows immediately by \eqref{mu-mu-F} and \eqref{mu-Es} that
  \begin{equation}\label{l-1}
    |l_1(v,v_1)|\,\leqslant\, C|v-v_1|\mu_F^{\alpha-1/2}(v)\mu_F(v_1)\,\leqslant\, \frac{C}{|v-v_1|}e^{-c|v-v_1|^2}.
  \end{equation}

  For $\K_2^F$, the Proposition 2.2 in \cite{Jiang-Xiong-Zhou-2021} indicates that
  \begin{equation*}
    \begin{aligned}
    \K_2^F g(v)=&2\iint_{\R^3\times\S^2} \frac{|(v-v_*)\cdot\omega|\N}{\sqrt{\mu_F(1-\mu_F)}}\frac{\sqrt{\mu_F^\prime(1-\mu_F^\prime)}} {\mu^\prime(1-\mu^\prime)}g^\prime \d \omega \d v_*\\
    =&2\iint_{\R^3\times\S^2} |(v-v_*)\cdot\omega|\mu_*^\prime \frac{\sqrt{\mu_F^\prime}}{\sqrt{\mu_F}}\frac{\sqrt{1-\mu_F^\prime}(1-\mu)(1-\mu_*)}{\sqrt{1-\mu_F}(1-\mu')}g' \d \omega \d v_*,
    \end{aligned}
  \end{equation*}
  Noting that $1-\mu$ and $1-\mu_F$ are both bounded and
  \begin{equation*}
    \mu(v)\sim e^{-\frac{|v-\u|^2}{2T}},\quad \mu_F(v) \sim e^{-\frac{|v|^2}{2T_m}},
  \end{equation*}
  then by a similar argument as Lemma 2.3 in \cite{GJJ-2010CPAM}, we can write  $\K_2^F g$ as
  \begin{equation*}
    \K_2^F g(v)=\int_{\R^3}l_2(v,v_1)g(v_1)\d v_1,
  \end{equation*}
  with
  \begin{equation}\label{l-2}
    \begin{aligned}
    |l_2(v,v_1)|\leqslant\frac{C}{|v_1-v|}e^{-c|v_1-v|^2}.
    \end{aligned}
  \end{equation}
  The estimates \eqref{l-1} and \eqref{l-2} implies \eqref{l-Es}.
\end{proof}

\noindent{\bf Proof of Lemma \ref{Lem-L-inf-Es}.} Setting $\K^F_wg=w\K^F(\frac{g}{w})$. Plugging \eqref{Reminder-h} into \eqref{Rem-eq} to yields
\begin{equation}\label{Rem-h-eq}
  \begin{aligned}
  \partial_t h_{R,\eps}+ & v\cdot\nabla_x h_{R,\eps}+\frac{\nu(\mu)}{\eps}h_{R,\eps}-\frac{1}{\eps} \K^F_w h_{R,\eps}\\ &=\frac{\eps^2 w}{\sqrt{\mu_F(1-\mu_F)}} \mathbb{Q}_1 +\sum_{i=1}^{3}\frac{\eps^{i-1}w}{\sqrt{\mu_F(1-\mu_F)}}\mathbb{Q}_2^i \\
  &\qquad-\sum_{\substack{i+j=1\\i,j \leqslant 3}}^{6}\frac{\eps^{i+j-1} w}{\sqrt{\mu_F(1-\mu_F)}}\mathbb{T}_1^{ij} -\sum_{i=0}^{3}\frac{\eps^{i+2} w}{\sqrt{\mu_F(1-\mu_F)}} \mathbb{T}_2^{i} -\frac{\eps^5 w}{\sqrt{\mu_F(1-\mu_F)}}\mathbb{T}_3 +\overline{R}_\eps,
  \end{aligned}
\end{equation}
where
\begin{equation*}
  \begin{aligned}
  \mathbb{Q}_1=&\Q\left(\tfrac{\sqrt{\mu_F(1-\mu_F)}}{w}h_{R,\eps},\tfrac{\sqrt{\mu_F(1-\mu_F)}}{w}h_{R,\eps}\right)\\
  \mathbb{Q}_2^i=&\left\{\,\Q\left(F_i,\tfrac{\sqrt{\mu_F(1-\mu_F)}}{w}h_{R,\eps}\right)+\Q\left(\tfrac{\sqrt{\mu_F(1-\mu_F)}}{w}h_{R,\eps},F_i\right)\,\right\},\quad i=1,2,3\\
  \mathbb{T}_1^{ij}=&\bigg\{\,\T\left(\tfrac{\sqrt{\mu_F(1-\mu_F)}}{w}h_{R,\eps},F_i,F_j\right) +\T\left(F_i,\tfrac{\sqrt{\mu_F(1-\mu_F)}}{w}h_{R,\eps},F_j\right)+\T\left(F_i,F_j,\tfrac{\sqrt{\mu_F(1-\mu_F)}}{w}h_{R,\eps}\right)\,\bigg\}\\
  &\hspace{10cm}i,j=0,\cdots,3, \, 1\leqslant i+j\leqslant 6\\
  \mathbb{T}_2^{i}=&\bigg\{\,\T\left(F_i,\tfrac{\sqrt{\mu_F(1-\mu_F)}}{w}h_{R,\eps},\tfrac{\sqrt{\mu_F(1-\mu_F)}}{w}h_{R,\eps}\right) +\T\left(\tfrac{\sqrt{\mu_F(1-\mu_F)}}{w}h_{R,\eps},F_i,\tfrac{\sqrt{\mu_F(1-\mu_F)}}{w}h_{R,\eps}\right)\\ &\hspace{6cm}+\T\left(\tfrac{\sqrt{\mu_F(1-\mu_F)}}{w}h_{R,\eps},\tfrac{\sqrt{\mu_F(1-\mu_F)}}{w}h_{R,\eps},F_i\right)\,\bigg\},\quad i=0,\cdots,3\\
  \mathbb{T}_3=&\T\left(\tfrac{\sqrt{\mu_F(1-\mu_F)}}{w}h_{R,\eps},\tfrac{\sqrt{\mu_F(1-\mu_F)}}{w}h_{R,\eps},\tfrac{\sqrt{\mu_F(1-\mu_F)}}{w}h_{R,\eps}\right) \\
  \overline{R}_\eps=&\sum_{\substack{i+j\geqslant 4\\i,j\leqslant 3}}\frac{\eps^{i+j-4} w}{\sqrt{\mu_F(1-\mu_F)}} \Q(F_i,F_j)-\sum_{\substack{i+j+k\geqslant 4\\i,j,k\leqslant 3}}\eps^{i+j+k-4}\T(F_i,F_j,F_k)-\frac{w(\partial_t +v\cdot \nabla_x)F_2}{\sqrt{\mu_F(1-\mu_F)}}.
  \end{aligned}
\end{equation*}
By Duhamel's principle, we can formulate the solution of \eqref{Rem-h-eq} as
\begin{equation}\label{Rem-h-sol}
  \begin{aligned}
  h_{R,\eps}&(t,x,v)\\
  &=\exp\left\{-\frac{1}{\eps}\int_{0}^{t}\nu(\tau)\d\tau\right\}h_{R,\eps}(0,x-vt,v)\\
  &\quad+\int_{0}^{t}\exp\left\{-\frac{1}{\eps}\int_{s}^{t}\nu(\tau)\d\tau\right\}(\tfrac{\K^F_w h_{R,\eps}}{\eps})(s,x-v(t-s),v)\d s\\
  &\qquad + \int_{0}^{t}\exp\left\{-\frac{1}{\eps}\int_{s}^{t}\nu(\tau)\d\tau\right\}\left(\tfrac{\eps^2 w}{\sqrt{\mu_F(1-\mu_F)}} \mathbb{Q}_1\right)(s,x-v(t-s),v)\d s\\
  &\qquad\quad +\int_{0}^{t}\exp\left\{-\frac{1}{\eps}\int_{s}^{t}\nu(\tau)\d\tau\right\} \left(\sum_{i=1}^{3}\tfrac{\eps^{i-1}w}{\sqrt{\mu_F(1-\mu_F)}}\mathbb{Q}_2^i\right)(s,x-v(t-s),v)\d s\\
  &\qquad\qquad -\int_{0}^{t}\exp\left\{-\frac{1}{\eps}\int_{s}^{t}\nu(\tau)\d\tau\right\} \left(\sum_{i+j=1}^{6}\tfrac{\eps^{i+j-1} w}{\sqrt{\mu_F(1-\mu_F)}}\mathbb{T}_1^{ij}\right)(s,x-v(t-s),v)\d s\\
  &\qquad\qquad\quad -\int_{0}^{t}\exp\left\{-\frac{1}{\eps}\int_{s}^{t}\nu(\tau)\d\tau\right\} \left(\sum_{i=0}^{3}\tfrac{\eps^{i+2} w}{\sqrt{\mu_F(1-\mu_F)}} \mathbb{T}_2^{i}\right)(s,x-v(t-s),v)\d s\\
  &\qquad\qquad\qquad -\int_{0}^{t}\exp\left\{-\frac{1}{\eps}\int_{s}^{t}\nu(\tau)\d\tau\right\}           \left(\tfrac{\eps^5 w}{\sqrt{\mu_F(1-\mu_F)}}\mathbb{T}_3\right)(s,x-v(t-s),v)\d s\\
  &\qquad\qquad\qquad\qquad -\int_{0}^{t}\exp\left\{-\frac{1}{\eps}\int_{s}^{t}\nu(\tau)\d\tau\right\} \overline{R}_\eps(s,x-v(t-s),v)\d s.
  \end{aligned}
\end{equation}
First, we deduce from \eqref{nu-Es} that
\begin{equation}\label{nu-int-Es}
  \int_{0}^{t}\exp\left\{-\frac{1}{\eps}\int_{s}^{t}\nu(\mu)(\tau)\d\tau\right\}\nu(\mu) \d s\leqslant c \int_{0}^{t}\exp\left\{-\frac{c(1+|v|)(t-s)}{\eps}\right\}(1+|v|) \d s=O(\eps).
\end{equation}
Since $\dis\mu_F(v_*)\leqslant \mu(v_*)$ and
\begin{equation*}
  \left|\tfrac{ w}{\sqrt{\mu_F(1-\mu_F)}} \Q\left(\tfrac{\sqrt{\mu_F(1-\mu_F)}}{w}h_{R,\eps},\tfrac{\sqrt{\mu_F(1-\mu_F)}}{w}h_{R,\eps}\right)\right|\leqslant C \nu(\mu)\|h_{R,\eps}\|_\infty^2,
\end{equation*}
the third line on the right hand side of \eqref{Rem-h-sol} is bounded by
\begin{equation}\label{term-3}
  C\eps^2 \int_{0}^{t}\exp\left\{-\frac{1}{\eps}\int_{s}^{t}\nu(\mu)(\tau)\d\tau\right\}\nu(\mu) \|h_{R,\eps}(s)\|_\infty^2\d s \leqslant C\eps^3 \sup_{0\leqslant s\leqslant t}\|h_{R,\eps}(s)\|_\infty^2.
\end{equation}
Since $F_i=\tfrac{\sqrt{\mu_F(1-\mu_F)}}{w}\left(\tfrac{w}{\sqrt{\mu_F(1-\mu_F)}}F_i\right)$, we have that
\begin{equation*}
  \begin{aligned}
  &\sum_{i=1}^{3}\tfrac{\eps^{i-1}w}{\sqrt{\mu_F(1-\mu_F)}}\left\{\,\Q\left(F_i,\tfrac{\sqrt{\mu_F(1-\mu_F)}}{w}h_{R,\eps}\right)+\Q\left(\tfrac{\sqrt{\mu_F(1-\mu_F)}}{w}h_{R,\eps},F_i\right)\,\right\} \\
  &\hspace{6cm}\leqslant C \nu(\mu)\|h_{R,\eps}(s)\|_\infty \left\|\sum_{i=1}^{3}\tfrac{\eps^{i-1}w}{\sqrt{\mu_F(1-\mu_F)}}F_i\right\|_\infty.
  \end{aligned}
\end{equation*}
Then the fourth line on the right hand side of \eqref{Rem-h-sol} is bounded by
\begin{equation}\label{term-4}
  C\int_{0}^{t}\exp\left\{-\frac{1}{\eps}\int_{s}^{t}\nu(\mu)(\tau)\d\tau\right\}\nu(\mu)\|h_{R,\eps}(s)\|_\infty \left\|\sum_{i=1}^{3}\tfrac{\eps^{i-1}w}{\sqrt{\mu_F(1-\mu_F)}}F_i\right\|_\infty\d s \leqslant C\eps \sup_{0\leqslant s\leqslant t}\|h_{R,\eps}(s)\|_\infty.
\end{equation}
Similarly, the fifth to seventh lines on the right hand side of \eqref{Rem-h-sol} are respectively bounded by
\begin{equation}\label{term-5}
  \begin{aligned}
  &\textit{the fifth line on RHS }\\
  \leqslant & C \sum_{\substack{i+j=1\\i,j\leqslant 3}}^{6}\eps^{i+j-1}\int_{0}^{t}\exp\left\{-\frac{1}{\eps}\int_{s}^{t}\nu(\mu)(\tau)\d\tau\right\}\nu(\mu) \|h_{R,\eps}(s)\|_\infty\times\\
  &\hspace{3cm}\left\{\left\|\tfrac{w}{\sqrt{\mu_F(1-\mu_F)}}F_i\right\|_\infty\|F_j\|_\infty +\left\|\tfrac{w}{\sqrt{\mu_F(1-\mu_F)}}F_i\right\|_\infty\|\tfrac{w}{\sqrt{\mu_F(1-\mu_F)}}F_j\|_\infty\right\}\d s\\
  \leqslant & C \eps \sup_{0\leqslant s\leqslant t}\|h_{R,\eps}(s)\|_\infty,
  \end{aligned}
\end{equation}
\begin{equation}\label{term-6}
  \begin{aligned}
  &\textit{the sixth line on RHS }\\
  \leqslant & C \sum_{i=0}^{3}\eps^{i+2}\int_{0}^{t}\exp\left\{-\frac{1}{\eps}\int_{s}^{t}\nu(\mu)(\tau)\d\tau\right\}\nu(\mu) \|h_{R,\eps}(s)\|_\infty^2\times\\
  &\hspace{8cm}\left\{\left\|\tfrac{w}{\sqrt{\mu_F(1-\mu_F)}}F_i\right\|_\infty+\|F_i\|_\infty\right\}\d s\\
  \leqslant & C \eps^3 \sup_{0\leqslant s\leqslant t}\|h_{R,\eps}(s)\|_\infty^2,
  \end{aligned}
\end{equation}
\begin{equation}\label{term-7}
  \begin{aligned}
  \textit{the seventh line on RHS } \leqslant & C \eps^5\int_{0}^{t}\exp\left\{-\frac{1}{\eps}\int_{s}^{t}\nu(\mu)(\tau)\d\tau\right\}\nu(\mu) \|h_{R,\eps}(s)\|_\infty^3\d s\\
  \leqslant & C \eps^6 \sup_{0\leqslant s\leqslant t}\|h_{R,\eps}(s)\|_\infty^3.
  \end{aligned}
\end{equation}
The last term on the right hand side of \eqref{Rem-h-sol} is obviously bounded by $C\eps$.

Now we devote ourselves to the second term on the right hand side of \eqref{Rem-h-sol}. Let $l_w(v,v_1)$ be the corresponding kernel associated with $\K^F_w$. Recall \eqref{l-Es}, we have
\begin{equation*}
  |l_w(v,v_1)|\leqslant C\frac{w(v_1)\exp\{-c|v-v_1|^2\}}{w(v)|v-v_1|} \leqslant C\frac{\exp\{-\frac{3c}{4}|v-v_1|^2\}}{|v-v_1|},
\end{equation*}
then it follows that
\begin{equation}\label{l-w-Es}
  \int_{\R^3}|l_w(v,v_1)|\d v_1\leqslant C.
\end{equation}
The second term on the right hand side of \eqref{Rem-h-sol} is bounded  by
\begin{equation*}
  \frac{1}{\eps}\int_{0}^{t}\exp\left\{-\frac{1}{\eps}\int_{s}^{t}\nu(\tau)\d\tau\right\}
  \int_{\R^3}\left|l_w(v,v_1)h_{R,\eps}(s,x-v(t-s),v_1)\right|\d v_1 \d s.
\end{equation*}
We use \eqref{Rem-h-sol} again to evaluate $h_{R,\eps}(s,x-v(t-s),v_1)$ and hence bound the above by
\begin{equation}\label{term-2}
  \begin{aligned}
  &\frac{1}{\eps}\int_{0}^{t}\exp\left\{-\frac{1}{\eps}\int_{s}^{t}\nu(\mu)(\tau)\d\tau -\frac{1}{\eps}\int_{0}^{s}\nu(\mu)(\tau)\d\tau\right\}\times\\
  &\hspace{3cm}\left(\int_{\R^3}|l_w(v,v_1)h_{R,\eps}(0,x-v(t-s)-v_1 s,v_1)|\d v_1\right)\d s\\
  &\quad+\frac{1}{\eps^2}\int_{0}^{t}\exp\left\{-\frac{1}{\eps}\int_{s}^{t}\nu(\mu)(\tau)\d\tau\right\}\iint_{\R^3\times \R^3}|l_w(v,v_1)| |l_w(v_1,v_2)|\d v_1\d v_2 \times\\
  &\hspace{2cm}\left(\int_{0}^{s}\exp\left\{-\frac{1}{\eps}\int_{s_1}^{s}\nu(\mu)(\tau,v_1)\d\tau\right\} |h_{R,\eps}(s_1,x-v(t-s)-v_1(s-s_1),v_2)|\d s_1\right)\d s\\
  &\qquad\quad+\frac{C}{\eps}\int_{0}^{t}\exp\left\{-\frac{1}{\eps}\int_{s}^{t}\nu(\mu)(\tau)\d\tau\right\}\d s \int_{\R^3}|l_w(v,v_1)|\d v_1\times\\
  &\hspace{3.5cm}\left\{2\eps \sup_{0\leqslant s\leqslant t}\|h_{R,\eps}(s)\|_\infty+
  2\eps^3 \sup_{0\leqslant s\leqslant t}\|h_{R,\eps}(s)\|_\infty^2 +\eps^6 \sup_{0\leqslant s\leqslant t}\|h_{R,\eps}(s)\|_\infty^3+\eps \right\}
  \end{aligned}
\end{equation}

Noting the estimate \eqref{l-w-Es}, the sum of the first and the third term in \eqref{term-2} is bounded by
\begin{equation}\label{term-2-13}
  C\left(\|h_{R,\eps}(0)\|_\infty+\eps \sup_{0\leqslant s\leqslant t}\|h_{R,\eps}(s)\|_\infty+
  \eps^3 \sup_{0\leqslant s\leqslant t}\|h_{R,\eps}(s)\|_\infty^2 +\eps^6 \sup_{0\leqslant s\leqslant t}\|h_{R,\eps}(s)\|_\infty^3+\eps\right)
\end{equation}
The estimate for the second term in \eqref{term-2} need more care. For some constant $N>0$ to be determined later, we divide this term into the following several cases.

{\bf Case 1.} $|v|\geqslant N$. By \eqref{l-w-Es},
\begin{equation*}
  \int_{\R^3}|l_w(v,v_1)|\d v_1\leqslant C\frac{1+|v|}{N},\qquad \int_{\R^3}|l_w(v_1,v_2)|\d v_2\leqslant C(1+|v_1|).
\end{equation*}
Therefore in this case we have the bound
\begin{equation}\label{Case-1}
  \begin{aligned}
  &\frac{C}{\eps^2}\sup_{0\leqslant s\leqslant t}\|h_{R,\eps}(s)\|_\infty\int_{0}^{t}\exp\left\{-\frac{c\nu(\mu)(t-s)}{\eps}\right\}\frac{1+|v|}{N}\d s\\
  &\qquad\qquad\times\int_{0}^{s}\exp\left\{-\frac{c(1+|v_1|)(s-s_1)}{\eps}\right\}(1+|v_1|)\d s_1\\
  \leqslant& \frac{C}{N}\sup_{0\leqslant s\leqslant t}\|h_{R,\eps}(s)\|_\infty.
  \end{aligned}
\end{equation}

{\bf Case 2.} For either $|v|\leqslant N$, $|v_1|\geqslant 2N$ or $|v_1|\leqslant 2N$, $|v_2|\geqslant 3N$. Under the assumptions, we have either $|v_1-v|\geqslant N$ or $|v_1-v_2|\geqslant N$. We then deduce that either one of the following is valid for some small $\eta>0$:
\begin{equation}\label{}
  |l_w(v,v_1)|\leqslant e^{-\frac{\eta}{8}N^2}\left|e^{\frac{\eta}{8}|v-v_1|^2} l_w(v,v_1)\right|,\quad|l_w(v_1,v_2)|\leqslant e^{-\frac{\eta}{8}N^2}\left|e^{\frac{\eta}{8}|v_1-v_2|^2} l_w(v_1,v_2)\right|.
\end{equation}
It immediately follows by \eqref{l-w-Es} that
\begin{equation*}
  \int_{|v_1-v|\geqslant N}|l_w(v,v_1)|\d v_1\leqslant C e^{-\frac{\eta}{8}N^2},\quad \int_{|v_1-v_2|\geqslant N}|l_w(v_1,v_2)|\d v_2\leqslant C e^{-\frac{\eta}{8}N^2}.
\end{equation*}
Consequently, this case is bounded by
\begin{equation}\label{Case-2}
  \begin{aligned}
  &\frac{C}{\eps^2}\sup_{0\leqslant s\leqslant t}\|h_{R,\eps}(s)\|_\infty \int_{0}^{t}\exp\left\{-\frac{c\nu(\mu)(t-s)}{\eps}\right\} \left(\int_{0}^{s}\exp\left\{-\frac{c(1+|v_1|)(s-s_1)}{\eps}\right\}\d s_1\right)\d s\\
  &\hspace{4cm}\times\left(\iint_{|v|\leqslant N, |v_1|\geqslant 2N}+\iint_{|v_1|\leqslant 2N,|v_2|\geqslant 3N}\right)|l_w(v,v_1)| |l_w(v_1,v_2)|\d v_1\d v_2\\
  &\qquad\leqslant  C e^{-\frac{\eta}{4}N^2}\sup_{0\leqslant s\leqslant t}\|h_{R,\eps}(s)\|_\infty.
  \end{aligned}
\end{equation}

{\bf Case 3a.} $|v|\leqslant N$, $|v_1|\leqslant 2N$ and $|v_2|\leqslant 3N$. Then there hold that $\nu(v)\leqslant C_N$ and $\nu(v_1)\leqslant C_N$. We further assume that $s-s_1\leqslant \kappa_*\eps$ for $\kappa_*>0$ small enough. In this case, we have the bound
\begin{equation}\label{Case-3a}
  \begin{aligned}
  &\frac{C}{\eps^2}\sup_{0\leqslant s\leqslant t}\|h_{R,\eps}(s)\|_\infty \int_{0}^{t}\exp\left\{-\frac{C_N(t-s)}{\eps}\right\} \left(\int_{s-\kappa_*\eps}^{s}\exp\left\{-\frac{C_N(s-s_1)}{\eps}\right\}\d s_1\right)\d s\\
  &\hspace{6cm}\times\iint_{|v|\leqslant N,|v_1|\leqslant 2N,|v_2|\leqslant 3N}|l_w(v,v_1)| |l_w(v_1,v_2)|\d v_1\d v_2\\
  &\qquad\leqslant  C_N\kappa_* \sup_{0\leqslant s\leqslant t}\|h_{R,\eps}(s)\|_\infty.
  \end{aligned}
\end{equation}

{\bf Case 3b.} $|v|\leqslant N$, $|v_1|\leqslant 2N$, $|v_2|\leqslant 3N$ and $s-s_1\geqslant \kappa_*\eps$. Setting $D_N=\{\,|v|\leqslant N,\,|v_1|\leqslant 2N,\,|v_2|\leqslant 3N\,\}$. Since $\nu(\mu)(v)\leqslant C\nu(\mu)$ and
\begin{equation*}
  \int_{\R^3}|l_w(v,v_1)|^2\d v_1\leqslant C,\quad\int_{\R^3}|l_w(v_1,v_2)|^2\d v_2\leqslant C,
\end{equation*}
the second term in \eqref{term-2} can be bounded by
\begin{equation*}
  \begin{aligned}
  &\frac{1}{\eps^2}\int_{0}^{t}\int_{0}^{s-\kappa_*\eps}\exp\left\{-\frac{c(t-s)}{\eps}\right\} \exp\left\{-\frac{c(s-s_1)}{\eps}\right\}\d s_1\d s\\
  &\qquad\times\iint_{D_N}|l_w(v,v_1)| |l_w(v_1,v_2)||h_{R,\eps}(s_1,x-v(t-s)-v_1(s-s_1),v_2)|\d v_1\d v_2\\
  \leqslant& \frac{C}{\eps^2}\int_{0}^{t}\int_{0}^{s-\kappa_*\eps}\exp\left\{-\frac{c(t-s)}{\eps}\right\} \exp\left\{-\frac{c(s-s_1)}{\eps}\right\}\d s_1\d s\\
  &\qquad\qquad\times\left(\iint_{D_N}|h_{R,\eps}(s_1,x-v(t-s)-v_1(s-s_1),v_2)|^2\d v_1\d v_2\right)^{1/2},
  \end{aligned}
\end{equation*}
where we have used the Cauchy-Schwarz inequality. For $s-s_1\geqslant \kappa_*\eps$, we can obtain by changing the variable $y=x-v(t-s)-v_1(s-s_1)$,
\begin{equation*}
  \begin{aligned}
  &\left(\iint_{D_N}|h_{R,\eps}(s_1,x-v(t-s)-v_1(s-s_1),v_2)|^2\d v_1\d v_2\right)^{1/2}\\
  =&\frac{1}{\kappa_*^{3/2}\eps^{3/2}}\left(\iint_{\widetilde{D}_N}\frac{1}{\mu(v_2)(1-\mu(v_2))}|h_{R,\eps}(s_1,y,v_2)|^2\mu(v_2)(1-\mu(v_2))\d y\d v_2\right)^{1/2}\\
  \leqslant & \frac{C_N}{\kappa_*^{3/2}\eps^{3/2}}\sup_{0\leqslant s\leqslant t}\|f_{R,\eps}(s)\|_2,
  \end{aligned}
\end{equation*}
where $\widetilde{D}_N=\{\,|v|\leqslant N,\,|y-[x-v(t-s)]|\leqslant 2N(s-s_1),\,|v_2|\leqslant 3N\,\}$. Thus, in this case we can bound the second term in \eqref{term-2} by
\begin{equation}\label{Case-3b}
  \frac{C_N}{\kappa_*^{3/2}\eps^{3/2}}\sup_{0\leqslant s\leqslant t}\|f_{R,\eps}(s)\|_2.
\end{equation}

In summary, we collect \eqref{term-2-13}and all the cases \eqref{Case-1}, \eqref{Case-2}-\eqref{Case-3b} to establish that for any $\kappa_*>0$ and large $N>0$ the second term in \eqref{Rem-h-sol} is bounded by
\begin{equation*}
  \begin{aligned}
  &C\left(\|h_{R,\eps}(0)\|_\infty+\eps \sup_{0\leqslant s\leqslant t}\|h_{R,\eps}(s)\|_\infty+
  \eps^3 \sup_{0\leqslant s\leqslant t}\|h_{R,\eps}(s)\|_\infty^2 +\eps^6 \sup_{0\leqslant s\leqslant t}\|h_{R,\eps}(s)\|_\infty^3+\eps\right)\\
  &\qquad+\left(\frac{C}{N}+C e^{-\frac{\eta}{4}N^2} +C_N\kappa_*\right)\sup_{0\leqslant s\leqslant t}\|h_{R,\eps}(s)\|_\infty+\frac{C_N}{\kappa_*^{3/2}\eps^{3/2}}\sup_{0\leqslant s\leqslant t}\|f_{R,\eps}(s)\|_2.
  \end{aligned}
\end{equation*}

Finally, by taking $N=6C$, $\eta=\frac{\ln(6C)}{9C^2}$ and $\kappa_*=\frac{1}{6C_N}$, we conclude that there exists a suitable small $\eps_2 \leqslant \min\{\frac{1}{6C},\,1\} $ such that for all $\eps<\eps_2$ and $t\in [0,\tau]$,
  \begin{equation*}
    \sup_{s\in [0,t]}\|\eps^{3/2}h_{R,\eps}(s)\|_\infty \leqslant C \big(\|\eps^{3/2}h_{R,\eps}(0)\|_\infty+\sup_{s\in [0,\tau]}\|f_{R,\eps}(s)\|_2+\eps^{5/2}\big).
  \end{equation*}
\rightline{$\qed$}

\section{Acoustic Limit: Proof of Theorem \ref{Thm-AC-Limit}}
\subsection{Refined Estimates for Compressible Euler and Acoustic System}
Now we focus on the acoustic limit from the scaled BFD equation \eqref{SBFD}. From the a priori estimates giving in the Theorem \ref{Thm-Euler-limit}, we obtian the existence of solutions to the BFD equation. Furthermore, the compressible Euler limit can be immediately derived by the arguments in \cite{Caflisch-1980CPAM}.

Essentially, the acoustic system \eqref{AC-Sys} we derived formally is the linear wave equations. Therefore, for any given initial data $(\sigma^\i,u^\i,\theta^\i)\in H^s,\,s\geqslant 4$, there is a unique global-in-time classical solution $(\sigma,u,\theta)\in C\left([0,\infty);\,H^s\right)$ to the system \eqref{AC-Sys}. Moreover, the solution satisfies the following energy estimates:
\begin{equation}\label{Ac-sys-Es}
  \left\|\left(\sqrt{3/(2\overline{K}_g)}\,\sigma,u,\sqrt{3/2}\,\theta\right)\right\|_{H^s}^2(t) =\left\|\left(\sqrt{3/(2\overline{K}_g)}\,\sigma^\i,u^\i,\sqrt{3/2}\,\theta^\i\right)\right\|_{H^s}^2,
\end{equation}
where $\overline{K}_g=\overline{K}_A-1$ and $\overline{K}_A$ is given by the notations \eqref{ints}.

On the other hand, we already have the existence of the solution to the compressible Euler system \eqref{Com-Euler-NonConser}. Based on the classical result of the lifespan of symmetric hyperbolic system, we conclude that
\begin{lemma}\label{Lem-lifespan}
  For any given $(\sigma^\i,u^\i,\theta^\i)\in H^s$ with $s\geqslant 4$, let
  \begin{equation*}
    \widehat{\rho}^\i=\overline{E}_0 \sigma^\i+\left(\frac{3\overline{E}_2}{2}-\overline{K}_g \overline{E}_0\right)\theta^\i,\qquad\e^\i=\overline{E}_2(\sigma^\i+\theta^\i).
  \end{equation*}
  Consider the non-conservative compressible Euler system \eqref{Com-Euler-NonConser} with the initial data
  \begin{equation}\label{In-data-Euler}
    \rho^\i=\overline{\rho}+\delta \widehat{\rho}^\i,\quad \u^\i=\delta u^\i, \quad \E^\i=\overline{\E}+\delta\e^\i,
  \end{equation}
  choose $\delta_1>0$ small enough so that for each $0< \delta \leqslant \delta_1$, $\rho^\i$ and $\E^\i$ are positive. Then there is a finite time $\tau^\delta>0$ for each $\delta$ such that the system \eqref{Com-Euler-NonConser} admits a unique classical solution $(\rho^\delta-\overline{\rho},\u^\delta,\E^\delta-\overline{\E})\in C\left([0,\tau^\delta];\,H^s\right)\cap C^1\left([0,\tau^\delta];\,H^{s-1}\right)$ with $\rho^\delta>0$, $\E^\delta>0$ and the following estimates:
  \begin{equation}\label{}
    \|(\rho^\delta-\overline{\rho},\u^\delta,\E^\delta-\overline{\E})\|_{C\left([0,\tau^\delta];\,H^s\right)\cap C^1\left([0,\tau^\delta];\,H^{s-1}\right)}\leqslant C_0.
  \end{equation}
  Furthermore, the lifespan $\tau^\delta$ has the lower bound:
  \begin{equation*}
    \tau^\delta>\frac{C_1}{\delta}.
  \end{equation*}
  Here, the constants $C_0>0$ and $C_1>0$ are independent of $\delta$, depending only on the $H^s$ norm of $(\sigma^\i,u^\i,\theta^\i)$.
\end{lemma}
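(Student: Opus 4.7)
\textbf{Proof proposal for Lemma \ref{Lem-lifespan}.} The plan is to combine Theorem \ref{Thm-rhouE-Existence} (which yields local well-posedness and an $H^{s_0}$-type bound on some $[0,\tau]$ depending on the size of the initial perturbation) with a refined energy estimate for the symmetric hyperbolic system \eqref{Symm-form} that exploits the fact that the initial deviation from $(\overline{\rho},\overline{\u},\overline{\E})$ is $O(\delta)$. First, since $(\sigma^\i,u^\i,\theta^\i)\in H^s$ and $s\geqslant 4> 3/2+1$, Sobolev embedding gives $\|(\widehat{\rho}^\i,u^\i,\e^\i)\|_{L^\infty}\leqslant C_I:=C\|(\sigma^\i,u^\i,\theta^\i)\|_{H^s}$. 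Choosing $\delta_1$ so that $\delta_1 C_I<\tfrac12\min(\gamma_0\overline{\rho},\gamma_0\overline{\E})$ verifies the smallness hypothesis \eqref{In-bd}, ensures $\rho^\i,\E^\i>0$, and places us inside the hypotheses of Theorem \ref{Thm-rhouE-Existence}; hence for each $0<\delta\leqslant\delta_1$ a unique classical solution $\widetilde U^\delta:=(\rho^\delta-\overline{\rho},\u^\delta,\E^\delta-\overline{\E})\in C([0,\tau_*^\delta];H^s)\cap C^1([0,\tau_*^\delta];H^{s-1})$ exists on some maximal interval.

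Next, I would run the standard symmetric hyperbolic energy estimate on \eqref{Symm-form} written in the perturbed form
\begin{equation*}
A_0(\overline U+\widetilde U^\delta)\,\partial_t\widetilde U^\delta+\sum_{j=1}^3 A_j(\overline U+\widetilde U^\delta)\,\partial_j \widetilde U^\delta=0,
\end{equation*}
applying $\partial^\alpha$ for $|\alpha|\leqslant s$, pairing with $\partial^\alpha \widetilde U^\delta$, and using (i) the uniform positive-definiteness of $A_0$ near $\overline U$ (guaranteed by the remark following Theorem \ref{Thm-rhouE-Existence}), (ii) the Moser-type commutator estimates on $[\partial^\alpha,A_j]\partial_j$, and (iii) the bounds on $\partial_t A_0$ and $\partial_j A_j$ which are pointwise $O(|\nabla_{t,x}\widetilde U^\delta|)$. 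The resulting differential inequality is the cubic estimate
\begin{equation*}
\frac{d}{dt}\|\widetilde U^\delta(t)\|_{H^s}^2\leqslant C_*\|\nabla\widetilde U^\delta\|_{L^\infty}\|\widetilde U^\delta\|_{H^s}^2\leqslant C\|\widetilde U^\delta\|_{H^s}^3,
\end{equation*}
where the last step uses Sobolev embedding $H^{s-1}\hookrightarrow L^\infty$ with $s\geqslant 4$.

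Setting $y(t):=\|\widetilde U^\delta(t)\|_{H^s}$ and integrating $y'\leqslant \tfrac{C}{2}y^2$ yields $y(t)\leqslant y(0)/\bigl(1-\tfrac{C}{2}y(0)\,t\bigr)$. Since $y(0)=\|\widetilde U^\delta(0)\|_{H^s}\leqslant C_I'\,\delta$ for some $C_I'$ depending only on $\|(\sigma^\i,u^\i,\theta^\i)\|_{H^s}$ (through the explicit formulas defining $\widehat{\rho}^\i,\e^\i$ and the linearity in $(\sigma^\i,u^\i,\theta^\i)$), the bootstrap $y(t)\leqslant 2C_I'\delta$ persists as long as $\tfrac{C}{2}\cdot 2C_I'\delta\cdot t\leqslant\tfrac12$, i.e.\ for $t\leqslant C_1/\delta$ with $C_1:=1/(2CC_I')$. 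On this interval the $L^\infty$-bound $\|\widetilde U^\delta\|_{L^\infty}\leqslant 2CC_I'\delta\leqslant 2CC_I'\delta_1$ preserves positivity of $\rho^\delta$ and $\E^\delta$ and keeps the solution inside the domain where $A_0$ is uniformly positive; hence the maximal existence time satisfies $\tau^\delta\geqslant C_1/\delta$ by the standard continuation criterion. The $C^1([0,\tau^\delta];H^{s-1})$ bound follows by reading $\partial_t\widetilde U^\delta=-A_0^{-1}\sum_j A_j\partial_j\widetilde U^\delta$ off the equation. Taking $C_0:=2CC_I'\delta_1$ completes the proof.

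The main obstacle is keeping constants uniform in $\delta$. The natural energy estimate produces $\|\widetilde U^\delta\|_{H^s}\lesssim e^{Ct}\|\widetilde U^\delta(0)\|_{H^s}$, which only gives logarithmic-in-$\delta$ lifespan. Extracting the \emph{cubic} (not quadratic) right-hand side, which in turn requires exploiting that the linear part of the equation at $\overline U$ has constant coefficients so that only terms genuinely nonlinear in $\widetilde U^\delta$ survive in $\partial_t A_0$ and $\partial_j A_j$, is the essential step that upgrades $\log(1/\delta)$ to $1/\delta$.
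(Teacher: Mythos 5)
Your argument is correct and is exactly the standard almost-global lifespan estimate for quasilinear symmetric hyperbolic systems with $O(\delta)$ data; the paper itself gives no proof of Lemma \ref{Lem-lifespan}, deferring instead to Friedrichs--Kato--Majda, and the content of those references is precisely the argument you spell out. The crux you identify — that because the linearization at the constant state $\overline U$ is a constant-coefficient symmetric system, the Moser commutators $[\partial^\alpha,A_j]\partial_j$ and the symmetrizer drift $\partial_t A_0+\sum_j\partial_j A_j$ are all $O(\|\nabla \widetilde U^\delta\|_{L^\infty})$, yielding $\frac{d}{dt}\|\widetilde U^\delta\|_{H^s}^2\lesssim\|\widetilde U^\delta\|_{H^s}^3$ rather than $\lesssim\|\widetilde U^\delta\|_{H^s}^2$ — is indeed what upgrades the naive lifespan to $\sim 1/\delta$, and integrating the resulting Riccati inequality with $y(0)\lesssim\delta$ gives $\tau^\delta\geqslant C_1/\delta$ and the uniform-in-$\delta$ bound $C_0$. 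Your bootstrap also correctly keeps $A_0$ uniformly positive-definite and $\rho^\delta,\E^\delta>0$, which is what the continuation criterion needs, and the $C^1([0,\tau^\delta];H^{s-1})$ regularity follows from the equation with $s-1>3/2$ as you say. No gap.
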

We refer to the literatures \cite{Friedrichs-1954CPAM,Kato-1975ARMA,Majda-1984Book} for the proof of Lemma \ref{Lem-lifespan}.

\begin{remark}
  Note that the lifespan of the acoustic system \eqref{AC-Sys} is $[0,+\infty)$, while those of the compressible Euler system \eqref{Com-Euler-NonConser} is finite. However, the lifespan of the latter can be understood to be arbitrary in the following sense. For any given $\tau>0$ and $(\sigma^\i,u^\i,\theta^\i)\in H^s$, we define
  \begin{equation}\label{delta-1}
    \delta_1=\frac{C_1}{\tau},
  \end{equation}
  then for any $0<\delta\leqslant \delta_1$, the lifespan $\tau^\delta$ obtained in Lemma \ref{Lem-lifespan} satisfies
  \begin{equation*}
    \tau^\delta> \frac{C_1}{\delta}>\frac{C_1}{\delta_1}=\tau.
  \end{equation*}
\end{remark}

From right now, all our statements are on an arbitrary time interval $[0,\tau]$ and we fix $\delta_1>0$ as in \eqref{delta-1}. In order to derive a refined estimate of two solutions to compressible Euler system and acoustic system, we shall introduce the following difference variables $(\rho_d^\delta,u_d^\delta,\E_d^\delta)$ that are given by the second-order perturbation in $\delta$ of Euler solutions:
\begin{equation}\label{diff-2-order}\left\{
  \begin{aligned}
  \delta^2 \rho_d^\delta=&\rho^\delta-\overline{\rho}-\delta\left(\overline{E}_0 \sigma+\left(\frac{3\overline{E}_2}{2}-\overline{K}_g \overline{E}_0\right)\theta\right)\\
  \delta^2 u_d^\delta =& \u^\delta-\delta u\\
   \delta^2 \E_d^\delta=& \E^\delta-\overline{\E}-\delta \overline{E}_2(\sigma+\theta).
  \end{aligned}\right.
\end{equation}

\begin{lemma}\label{Lem-diff-2-order}
  Let $\tau>0$ and $s \geqslant 3$. Assume $(\sigma,u,\theta)$ is the solution to the acoustic system \eqref{AC-Sys} with initial data $(\sigma^\i,u^\i,\theta^\i)\in H^{s+1}$ and $(\rho^\delta,\u^\delta,\E^\delta)$ is the solution to the compressible Euler system \eqref{Com-Euler-NonConser} constructed in Lemma \ref{Lem-lifespan}. Then for all $0<\delta\leqslant \delta_1$, there exists a constant $C_2>0$ depending on $\tau$ and $\|(\sigma^\i,u^\i,\theta^\i)\|_{H^{s+1}}$ such that
  \begin{equation}\label{diff-2-order-Es}
    \left\|(\rho_d^\delta,u_d^\delta,\E_d^\delta)\right\|_{H^s}\leqslant C_2.
  \end{equation}
\end{lemma}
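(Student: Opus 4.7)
\textbf{Proof proposal for Lemma \ref{Lem-diff-2-order}.}

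The plan is to derive an evolution system for $(\rho_d^\delta, u_d^\delta, \E_d^\delta)$ by substituting the ansatz \eqref{diff-2-order} into the non-conservative compressible Euler system \eqref{Com-Euler-NonConser}, exploiting the fact that the order-$\delta$ terms cancel precisely because the zeroth-order profile $(\widehat\rho,u,\e) := (\overline E_0\sigma+(3\overline E_2/2-\overline K_g\overline E_0)\theta,\, u,\, \overline E_2(\sigma+\theta))$ satisfies the linearization of \eqref{Com-Euler-NonConser} around $(\overline\rho,0,\overline\E)$, which is exactly the acoustic system \eqref{AC-Sys} after the linear change of unknowns. Concretely, after expanding each equation in powers of $\delta$, collecting the $O(\delta^0)$ terms after division by $\delta^2$, and using the acoustic identities, one obtains
\begin{equation*}
A_0(U^\delta)\,\partial_t W_d^\delta+\sum_{j=1}^3 A_j(U^\delta)\,\partial_j W_d^\delta = \mathcal{S}^\delta(\sigma,u,\theta)+\delta\,\mathcal{L}^\delta(W_d^\delta)+\delta^2\,\mathcal{N}(W_d^\delta),
\end{equation*}
where $W_d^\delta=(\rho_d^\delta,u_d^\delta,\E_d^\delta)^{\mathrm{Tr}}$, the matrices $A_0,A_j$ are the symmetrizer/flux matrices evaluated along the Euler solution $U^\delta=(\E^\delta,\u^\delta,\rho^\delta)$ (so $A_0$ is uniformly positive definite by Lemma \ref{Lem-lifespan} and the remark following it), $\mathcal{S}^\delta$ is a source composed of quadratic products of $(\sigma,u,\theta)$ and their derivatives (hence uniformly bounded in $H^s$ on $[0,\tau]$ by \eqref{Ac-sys-Es}), $\mathcal{L}^\delta$ is linear in $W_d^\delta$ with coefficients depending smoothly on $(U^\delta,\sigma,u,\theta)$, and $\mathcal{N}$ collects the genuinely quadratic terms $u_d^\delta\cdot\nabla\rho_d^\delta$, $\rho_d^\delta\nabla\cdot u_d^\delta$, etc.

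The initial data vanish: by construction of \eqref{In-data-Euler} one has $W_d^\delta(0,x)\equiv 0$. Applying the standard symmetric-hyperbolic $H^s$ energy method (as in the proof of Theorem \ref{Thm-rho-u-theta-n}), commuting $\partial^\beta$ with the equation for $|\beta|\le s$ and using Moser-type commutator/product estimates, yields
\begin{equation*}
\frac{d}{dt}\|W_d^\delta\|_{H^s}^2 \;\le\; C_\star\bigl(\|W_d^\delta\|_{H^s}^2+\|W_d^\delta\|_{H^s}+\delta^2\|W_d^\delta\|_{H^s}^3\bigr),
\end{equation*}
where $C_\star$ depends on $\tau$, on the uniform $H^s$ bound of $U^\delta-(\overline\E,0,\overline\rho)$ from Lemma \ref{Lem-lifespan}, and on the $H^{s+1}$ norm of $(\sigma,u,\theta)$ from \eqref{Ac-sys-Es}; the linear $\|W_d^\delta\|_{H^s}$ term comes from pairing with $\mathcal{S}^\delta$, and the cubic term from $\delta^2\mathcal{N}$. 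A continuity/bootstrap argument then closes the estimate: working on the subinterval where $\delta^2\|W_d^\delta\|_{H^s}\le 1$, the cubic term is dominated by the quadratic one, Gr\"onwall's inequality gives $\|W_d^\delta\|_{H^s}(t)\le C_2$ on $[0,\tau]$ for a constant $C_2$ independent of $\delta$, and then choosing $\delta_1$ possibly smaller so that $\delta_1^2 C_2\le 1$ vindicates the bootstrap hypothesis and yields \eqref{diff-2-order-Es}.

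The main obstacle is bookkeeping rather than analytic depth: one must carefully identify the $O(\delta^0)$, $O(\delta)$, and $O(\delta^2)$ pieces after subtracting the acoustic leading order, and verify that the residual source $\mathcal{S}^\delta$ is genuinely $H^s$-bounded uniformly in $\delta$. This requires that the acoustic solution be one derivative smoother than $W_d^\delta$, which is why we assume $(\sigma^\i,u^\i,\theta^\i)\in H^{s+1}$ (gradients of $(\sigma,u,\theta)$ appear in $\mathcal{S}^\delta$). A secondary technical point is controlling the reciprocal factors $1/\rho^\delta$ appearing in the momentum equation; this is harmless because $\rho^\delta\ge(1-\gamma_0)\overline\rho$ uniformly on $[0,\tau]\times\R^3$ by Lemma \ref{Lem-lifespan}, so all coefficients are smooth functions of $U^\delta$ on a compact set bounded away from the vacuum, and the Moser estimates apply.
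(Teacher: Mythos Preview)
Your approach is essentially the same as the paper's: substitute the ansatz \eqref{diff-2-order} into \eqref{Com-Euler-NonConser}, observe that the $O(\delta)$ terms cancel because $(\sigma,u,\theta)$ solves the acoustic system, and run an $H^s$ energy estimate on the resulting symmetric hyperbolic system for $W_d^\delta$ with zero initial data.

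There is one point where you make your life harder than necessary. Once you write the principal part as $A_0(U^\delta)\partial_t W_d^\delta+\sum_j A_j(U^\delta)\partial_j W_d^\delta$ with the matrices evaluated along the \emph{full} Euler solution $U^\delta$, the system is genuinely \emph{linear} in $W_d^\delta$: the would-be quadratic pieces such as $\delta^2 u_d^\delta\cdot\nabla\rho_d^\delta$ or $\delta^2\rho_d^\delta\nabla\cdot u_d^\delta$ are exactly the contributions needed to complete $\u^\delta\cdot\nabla\rho_d^\delta$ and $\rho^\delta\nabla\cdot u_d^\delta$, and are therefore already contained in the $A_j(U^\delta)\partial_j W_d^\delta$ term. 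There is no separate $\delta^2\mathcal N(W_d^\delta)$, and hence no need for a bootstrap or for possibly shrinking $\delta_1$; a direct Gr\"onwall on the linear energy inequality
\[
\frac{d}{dt}\|W_d^\delta\|_{H^s}^2 \le C_3\|W_d^\delta\|_{H^s}^2 + C_4\|W_d^\delta\|_{H^s}
\]
suffices. This is how the paper proceeds (see its system \eqref{diff-2-order-Eq} and \eqref{U-d}). Your argument is not wrong, but the bootstrap step is superfluous and the presence of $\mathcal N$ in your displayed equation is double-counting.
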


\begin{remark}
  From Lemma \ref{Lem-diff-2-order}, we deduce immediately that the acoustic system is the linearization about the constant state $(\overline{\rho},0,\overline{\E})$ of the compressible Euler system:
  \begin{equation}\label{diff-2-order-Es-1}
    \sup_{t\in[0,\tau]}\left\|\left(\rho^\delta-\overline{\rho}-\delta\left(\overline{E}_0 \sigma+\left(3\overline{E}_2/2-\overline{K}_g \overline{E}_0\right)\theta\right),\u^\delta-\delta u, \E^\delta-\overline{\E}-\delta \overline{E}_2(\sigma+\theta)\right)\right\|_{H^s}\leqslant C_2\delta^2.
  \end{equation}
  Moreover, the Sobolev embedding $H^3(\R^3)\hookrightarrow C^1(\R^3)$ indicates that we can acquire the uniform pointwise estimates of the difference variables $(\rho_d^\delta,u_d^\delta,\E_d^\delta)$.
\end{remark}

\begin{proof}[{\bf Proof of Lemma \ref{Lem-diff-2-order}}]
  Inserting \eqref{diff-2-order} into the non-conservative compressible Euler system \eqref{Com-Euler-NonConser} to yield
  \begin{equation*}
    \begin{aligned}
    &\partial_t\left\{\delta\left(\overline{E}_0 \sigma+\left(3\overline{E}_2/2-\overline{K}_g \overline{E}_0\right)\theta\right)+\delta^2 \rho_d^\delta \right\}\\
    &\qquad+(\delta u+\delta^2 u_d^\delta)\cdot\nabla_x\left\{\delta\left(\overline{E}_0 \sigma+\left(3\overline{E}_2/2-\overline{K}_g \overline{E}_0\right)\theta\right)+\delta^2 \rho_d^\delta \right\}\\
    &\qquad\qquad+\left\{\overline{\rho}+\delta\left(\overline{E}_0 \sigma+\left(3\overline{E}_2/2-\overline{K}_g \overline{E}_0\right)\theta\right)+\delta^2 \rho_d^\delta \right\}\nabla_x\cdot(\delta u+\delta^2 u_d^\delta)=0\\
    &\left\{\overline{\rho}+\delta\left(\overline{E}_0 \sigma+\left(3\overline{E}_2/2-\overline{K}_g \overline{E}_0\right)\theta\right)+\delta^2 \rho_d^\delta \right\}\partial_t(\delta u+\delta^2 u_d^\delta)\\
    &\qquad+\left\{\overline{\rho}+\delta\left(\overline{E}_0 \sigma+\left(3\overline{E}_2/2-\overline{K}_g \overline{E}_0\right)\theta\right)+\delta^2 \rho_d^\delta \right\}(\delta u+\delta^2 u_d^\delta)\cdot\nabla_x(\delta u+\delta^2 u_d^\delta)\\
    &\qquad\qquad+ \nabla_x(\delta \overline{E}_2(\sigma+\theta)+\delta^2 \E_d^\delta)=0\\
    &\partial_t (\delta \overline{E}_2(\sigma+\theta)+\delta^2 \E_d^\delta)+(\delta u+\delta^2 u_d^\delta)\cdot\nabla_x(\delta \overline{E}_2(\sigma+\theta)+\delta^2 \E_d^\delta)\\
    &\qquad\qquad+ \frac{5}{3}(\overline{\E}+\delta \overline{E}_2(\sigma+\theta)+\delta^2 \E_d^\delta)\nabla_x\cdot(\delta u+\delta^2 u_d^\delta)=0.
    \end{aligned}
  \end{equation*}
  Noting that $(\sigma, u, \theta)$ satisfy the acoustic system \eqref{AC-Sys}. We can deduce that the coefficients of $\delta$ in the equations above are all equal to $0$:
   \begin{equation*}\left\{
     \begin{aligned}
     &\partial_t\left\{\left(\overline{E}_0 \sigma+\left(3\overline{E}_2/2-\overline{K}_g \overline{E}_0\right)\theta\right)\right\}+\overline{\rho}\nabla_x\cdot u=0,\\
     &\overline{\rho}\partial_t u +\overline{E}_2\nabla_x (\sigma+\theta)=0,\\
     &\overline{E}_2\partial_t (\sigma+\theta)+\frac{5}{3}\overline{\E}\nabla_x\cdot u=0,
     \end{aligned}\right.
   \end{equation*}
   since we have $\overline{\rho}=\overline{E}_2$ and $\overline{K}_A=\frac{5\overline{\E}}{2\overline{\rho}}$. Therefore the remaining terms which are the coefficients of $\delta^2$ form the equations:
   \begin{equation}\label{diff-2-order-Eq}
     \begin{aligned}
   &\partial_t\rho_d^\delta+\u^\delta\cdot\nabla_x \rho_d^\delta+\rho^\delta\nabla_x\cdot u_d^\delta+\delta\left\{u_d^\delta\cdot\nabla_x\left[\overline{E}_0 \sigma+\left(3\overline{E}_2/2-\overline{K}_g \overline{E}_0\right)\theta \right]+\rho_d^\delta\nabla_x\cdot u\right\}\\
   &\hspace{7cm}=-\nabla_x\cdot\Big\{\left[\,\overline{E}_0 \sigma+\left(3\overline{E}_2/2-\overline{K}_g \overline{E}_0\right)\theta\,\right]u\Big\}\\
   &\rho^\delta\partial_t u_d^\delta+\rho^\delta \u^\delta\cdot\nabla_x u_d^\delta+\nabla_x\E_d^\delta +\delta\left( \rho_d^\delta\partial_t  u+\rho^\delta u_d^\delta\cdot\nabla_x u \right) \\
   &\hspace{7cm}=-\left[\overline{E}_0 \sigma+\left(3\overline{E}_2/2-\overline{K}_g \overline{E}_0\right)\theta\right]\partial_t  u-\rho^\delta u\cdot\nabla_x u\\
   &\partial_t \E_d^\delta+\u^\delta\cdot\nabla_x\E_d^\delta+\frac{5}{3}\E^\delta\nabla_x\cdot u_d^\delta +\delta\left[\overline{E}_2 u_d^\delta\cdot\nabla_x(\sigma+\theta)+\frac{5}{3}\E_d^\delta\nabla_x\cdot u\right]\\
   &\hspace{7cm}=-\overline{E}_2 u\cdot(\sigma+\theta)-\frac{5}{3}\overline{E}_2\nabla_x(\sigma+\theta)\nabla_x\cdot u.
   \end{aligned}
   \end{equation}

   Indeed, the equations above are the system governed by the difference variables $(\rho_d^\delta,u_d^\delta,\E_d^\delta)$ with the coefficients depending on the smooth Euler solution $(\rho^\delta,\u^\delta,\E^\delta)$ and smooth acoustic solution $(\sigma,u,\theta)$ over time integral $[0,\tau]$. Then going a step further, we can formulate \eqref{diff-2-order-Eq} as a symmetric hyperbolic system:
   \begin{equation}\label{U-d}
     A_0^\delta \partial_t U_d +\sum_{i=1}^{3}A_i^\delta \partial_i U_d +B^\delta U_d = F,
   \end{equation}
   where $A_0^\delta$, $A_i^\delta$, $B^\delta$ and $F$ are given by
   \begin{equation*}
     U_d=\left(\begin{aligned}\E_d^\delta \\ \u_d^\delta \\ \rho_d^\delta\end{aligned}\right),\quad
     A_0^\delta=\left(\begin{array}{ccc} \frac{6}{5}& 0 &-\frac{\E^\delta}{\rho^\delta}\\ 0 & \rho^\delta\E^\delta \mathbb{I} & 0 \\ -\frac{\E^\delta}{\rho^\delta} & 0 & \frac{5(\E^\delta)^2}{3(\rho^\delta)^2}\end{array}\right),\quad
     A_i^\delta=\left(\begin{array}{ccc} \frac{6}{5}\u_i^\delta & \E^\delta e_i &  -\frac{\E^\delta}{\rho^\delta}\u_i^\delta \\ \E^\delta (e_i)^\mathrm{T} &  \rho^\delta \E^\delta \u_i^\delta \mathbb{I} & 0 \\ -\frac{\E^\delta}{\rho^\delta}\u_i^\delta & 0 & \frac{5(\E^\delta)^2}{3(\rho^\delta)^2}\u_i^\delta \end{array}\right),
   \end{equation*}
   \begin{equation*}
     B^\delta=\delta A_0^\delta\left(\begin{array}{ccc}\frac{5}{3}\nabla_x\cdot u & \overline{E}_2 \nabla_x(\sigma+\theta) & 0 \\ 0 & \rho^\delta\nabla_x u & \partial_t  u\\ 0 & \nabla_x\left[\overline{E}_0 \sigma+\left(3\overline{E}_2/2-\overline{K}_g \overline{E}_0\right)\theta \right] & \nabla_x\cdot u \end{array}\right),
   \end{equation*}
   \begin{equation*}
     F=-A_0^\delta\left(\begin{aligned}\overline{E}_2 u\cdot(\sigma+\theta)+\frac{5}{3}\overline{E}_2\nabla_x(\sigma+\theta)\nabla_x\cdot u\\
     \left[\overline{E}_0 \sigma+\left(3\overline{E}_2/2-\overline{K}_g \overline{E}_0\right)\theta\right]\partial_t  u+\rho^\delta u\cdot\nabla_x u \\
     \nabla_x\cdot\Big\{\left[\,\overline{E}_0 \sigma+\left(3\overline{E}_2/2-\overline{K}_g \overline{E}_0\right)\theta\,\right]u\Big\}
     \end{aligned}\right).
   \end{equation*}
   Here, $\mathbb{I}$ is the $3\times 3$ identity matrix and $e_i$'s $(i=1,2,3)$ are the standard unit row base vectors in $\R^3$. $(\cdot)^\mathrm{T}$ denotes the transpose of row vectors. Similar to the arguments in section \ref{Sec-Uni-bd}, the system \eqref{U-d} is strictly hyperbolic. Hence by the Friedrichs’ existence theory of the linear hyperbolic system (c.f. Chapter 2 in \cite{Majda-1984Book}), the existence of smooth solution to the system \eqref{U-d} is guaranteed and in addition for $s\geqslant 3$, we have the following energy inequality:
   \begin{equation}\label{U-d-Es}
     \frac{\d}{\d t}\|U_d\|_{H^s}^2 \leqslant C_3\|U_d\|_{H^s}^2+C_4 \|U_d\|_{H^s},
   \end{equation}
   where $C_3$ and $C_4$ are depending on the $H^{s+1}$ norms of $(\rho^\delta,\u^\delta,\E^\delta)$ and $(\sigma,u,\theta)$. A simple Gr\"{o}nwall inequality combining with the estimates \eqref{Ac-sys-Es} implies that $\|(\rho_d^\delta,u_d^\delta,\E_d^\delta)\|_{H^s}$ is bounded by a constant that depends only on $\tau$ and $\|(\sigma^\i,u^\i,\theta^\i)\|_{H^{s+1}}$. We obtain the conclusion as announced.
\end{proof}

\subsection{Local Fermi-Dirac Distribution and Proof of Theorem \ref{Thm-AC-Limit}}
Recall the Proposition \ref{Prop-Relation-rhoEfT}. Since $0<\frac{\overline{\rho}}{\overline{\E}^{3/5}}< J$, we can choose $\delta_2>0$ sufficient small such that for any $0<\delta \leqslant \delta_2$, there holds $0<\frac{\rho^\delta}{(\E^\delta)^{3/5}}\leqslant J$ because of the estimate \eqref{diff-2-order-Es-1}. Therefore, by \eqref{f} and \eqref{T} we can define
\begin{equation}\label{f-T-del}
  \f^\delta=\phi\left(\frac{\rho^\delta}{(\E^\delta)^{3/5}}\right),\quad T^\delta=\psi(\rho^\delta,\f^\delta).
\end{equation}
In addition, from the Proposition \ref{Prop-fT-Es}, we can deduce that for each $0<\delta \leqslant \delta_2$, there exist constants $\f_m^\delta$, $\f_M^\delta$ and $T_m^\delta>0$ such that
\begin{equation}\label{fT-del-Es}
  \f_m^\delta \leqslant \f(t,x) \leqslant \f_M^\delta,\qquad T_m^\delta< \min_{(t,x)\in[0,\tau]\times\R^3} T(t,x)\leqslant \max_{(t,x)\in[0,\tau]\times\R^3} T(t,x) <2 T_m^\delta.
\end{equation}
Obviously, we should take
\begin{equation}\label{delta-0}
  \delta_0=\min\{\,\delta_1,\,\delta_2 \,\}.
\end{equation}

Now we introduce a local Fermi-Dirac distribution governed by $(\f^\delta,\u^\delta,T^\delta)$ as following
\begin{equation}\label{}
  \mu^\delta(t,x,v)=\frac{1}{1+e^{\frac{|v-\u^\delta(t,x)|^2}{2T^\delta(t,x)}-\f^\delta(t,x)}}.
\end{equation}
Similar to the process of proving compressible Euler limit, Hilbert expansion around local Fermi-Dirac distribution $\mu^\delta$ takes the form
\begin{equation*}
  F_\eps=\mu^\delta+\sum_{n=1}^{3}\eps^n F_n^\delta+\eps^3 F_{R,\eps}^\delta.
\end{equation*}
Here the coefficients $F_n^\delta$ ($n=1,2,3$) and the remainder $F_{R,\eps}^\delta$ are determined by a same argument in section \ref{Sec-Uni-bd}. As a consequence of Theorem \ref{Thm-Euler-limit}, we can obtain
\begin{equation}\label{uni-bd}
  \sup_{0\leqslant t \leqslant \tau} \|F_\eps-\mu^\delta\|_2+\sup_{0\leqslant t \leqslant \tau} \|F_\eps-\mu^\delta\|_\infty\leqslant C_\tau \eps,
\end{equation}
where the constant $C_\tau>0$ is depending on $\tau$, $\mu^\delta$ and $F_n^\delta$, $n=1,2,3$.

In fact, noting the definition of $G$ \eqref{G}, by Taylor expansion we see that $\mu^\delta$ is close to $\mu_0+\delta G$. That is
\begin{lemma}
  Let $\delta_0$, $\mu_0$ and $G$ be as in \eqref{delta-0}, \eqref{mu-0} and \eqref{G} respectively and the assumption \eqref{Hypo} hold. Assume $(\rho^\delta,\u^\delta,\E^\delta)$ is the smooth solution to the compressible Euler system constructed in Lemma \ref{Lem-lifespan} and $(\sigma,u,\theta)$ is the smooth acoustic solution. Then for each $0<\delta\leqslant \delta_0$, there holds
  \begin{equation}\label{mu-del-2-order-Es}
    \sup_{0\leqslant t \leqslant \tau} \|\mu^\delta(t)-\mu_0-\delta G(t)\|_2+\sup_{0\leqslant t \leqslant \tau} \|\mu^\delta(t)-\mu_0-\delta G(t)\|_\infty\leqslant C_5 \delta^2,
  \end{equation}
  where the constant $C_5$ depends on $\tau$ and $(\sigma^\i,u^\i,\theta^\i)$.
\end{lemma}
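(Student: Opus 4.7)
The plan is a two-stage Taylor expansion: first expand the parameters $(\f^\delta,\u^\delta,T^\delta)$ in powers of $\delta$ around the equilibrium $(1,0,1)$, then expand the smooth map $(\f,\u,T)\mapsto\mu$ pointwise in $v$. The endpoint is
\begin{equation*}
\mu^\delta = \mu_0 + \delta\,\mu_0(1-\mu_0)\bigl[(\sigma-\overline{K}_g\theta)+v\cdot u+\tfrac{|v|^2}{2}\theta\bigr] + \delta^2\mathcal{R}^\delta(t,x,v),
\end{equation*}
where the first-order term is exactly $G$ by \eqref{G}, and $\mathcal{R}^\delta$ is an $L^2_{x,v}\cap L^\infty_{x,v}$ remainder controlled uniformly in $t\in[0,\tau]$ and $\delta\in(0,\delta_0]$.

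Stage 1 (parameter expansion). Using the representations $\f^\delta=\phi(\rho^\delta/(\E^\delta)^{3/5})$ and $T^\delta=\psi(\rho^\delta,\f^\delta)$ from Proposition \ref{Prop-Relation-rhoEfT}, combined with the second-order Euler perturbation estimate \eqref{diff-2-order-Es-1} of Lemma \ref{Lem-diff-2-order}, I Taylor-expand at the equilibrium $(\overline\rho,\overline\E)$ to obtain, uniformly on $[0,\tau]\times\R^3$,
\begin{equation*}
\f^\delta = 1+\delta \f_1+\delta^2\f_d^\delta,\qquad \u^\delta=\delta u+\delta^2 u_d^\delta,\qquad T^\delta=1+\delta\theta_1+\delta^2 T_d^\delta,
\end{equation*}
with $\f_d^\delta,u_d^\delta,T_d^\delta$ bounded in $L^\infty_{t,x}$ via the Sobolev embedding $H^s\hookrightarrow L^\infty$ for $s\geqslant 3$ in Lemma \ref{Lem-diff-2-order}. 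The $C^2$ smoothness of $\phi$ required to control the nonlinear Taylor remainder is furnished by hypothesis \eqref{Hypo}; smoothness of $\psi$ is automatic from \eqref{T}.

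Stage 2 (identification of $\f_1$ and $\theta_1$). One must verify $\f_1=\sigma-\overline{K}_g\theta$ and $\theta_1=\theta$. Differentiating \eqref{Relation-rhoEfT} at $(\f,T)=(1,1)$ produces the $2\times 2$ Jacobian of $(\rho,\E)$ with respect to $(\f,T)$, with entries built from $\overline{p}_k^j$ in \eqref{ints}; it is invertible (indeed its determinant carries the same sign as in the analogous computation for $\nabla_x T$ in the proof of Proposition \ref{Prop-fT-Es}). Matching against the $\delta^1$-coefficients $(\overline{E}_0\sigma+(3\overline{E}_2/2-\overline{K}_g\overline{E}_0)\theta,\,\overline{E}_2(\sigma+\theta))$ of $(\rho^\delta-\overline\rho,\E^\delta-\overline\E)$ from \eqref{diff-2-order}, and using $\overline{E}_2=\overline\rho$ and $\overline{K}_A=5\overline\E/(2\overline\rho)$, yields the claimed expressions. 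Consistency check: at $t=0$ these reproduce exactly the initial parameters $(1+\delta(\sigma^\i-\overline{K}_g\theta^\i),\delta u^\i,1+\delta\theta^\i)$ of $\mu^\delta(0)$ prescribed in Theorem \ref{Thm-AC-Limit}, confirming that the Fermi-Dirac linearisation correctly couples to the acoustic variables $(\sigma,u,\theta)$ (in perfect agreement with the formal expansion \eqref{mu-Taylor-Exp}).

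Stage 3 (pointwise expansion and main obstacle). With the parameter expansion in hand, I apply a second-order Taylor expansion in $(\f,\u,T)$ to the explicit formula $\mu=(1+\exp(|v-\u|^2/(2T)-\f))^{-1}$: the $\delta^0$ term is $\mu_0$, the $\delta^1$ term is $\mu_0(1-\mu_0)(\f_1+v\cdot u+\tfrac{|v|^2}{2}\theta)=G$ after Stage 2, and the $\delta^2$ Lagrange remainder has pointwise form $P(v)\,\mu_0^\beta(v)\cdot L^\infty_{t,x}\text{-factor}$ for some polynomial $P$ and $\beta\in(0,1]$, since every derivative of $\mu$ in its parameters generates at worst polynomial-in-$v$ prefactors against the Gaussian-type Fermi-Dirac decay. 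Such a remainder is integrable and uniformly bounded in $v$, so passage to $L^2_{x,v}$ and $L^\infty_{x,v}$ produces the desired $C_5\delta^2$ bound. The principal difficulty is concentrated in Stage 1, where one must combine hypothesis \eqref{Hypo} with the quantitative bound \eqref{diff-2-order-Es-1} to guarantee uniform-in-$(t,x)$ Taylor remainders for $\phi$ and $\psi$; once this is in place, Stages 2 and 3 are explicit calculations controlled by the regularity \eqref{Ac-sys-Es} of the acoustic solution and by the $v$-decay of $\mu_0$.
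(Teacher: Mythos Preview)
Your proposal is correct and follows essentially the same two-stage strategy as the paper: first Taylor-expand the parameters $(\f^\delta,\u^\delta,T^\delta)$ via $\phi,\psi$ using \eqref{Hypo} and Lemma~\ref{Lem-diff-2-order}, then Taylor-expand the Fermi--Dirac distribution in those parameters. The paper organises the second stage by introducing a one-parameter family $z\mapsto\mu^{\delta,z}$ (with $\f^{\delta,z}=1+(\sigma-\overline{K}_g\theta)z+\f_d^\delta z^2$ etc.) and expanding $\mu(z)$ as a function of the single variable $z$, which is a clean device for writing the Lagrange remainder as $\tfrac{1}{2}\mu''(\delta_*)\delta^2$; your multivariate expansion in $(\f,\u,T)$ is equivalent. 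One minor point: in Stage~3 you describe the remainder's $x$-factor as merely $L^\infty_{t,x}$, but for the $\|\cdot\|_2$ bound you also need it in $L^2_x$---this follows immediately since the parameter remainders $\f_d^\delta,u_d^\delta,T_d^\delta$ and the acoustic variables $\sigma,u,\theta$ all lie in $H^s$ (and products of $H^s$ functions for $s\geqslant 2$ remain in $H^s\subset L^2_x$), so the omission is harmless.
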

\begin{proof}
  Firstly, we should expand $\f^\delta$ and $T^\delta$ given by \eqref{f-T-del}. Applying Taylor expansion to
  \begin{equation*}
    \f^\delta=\phi\left(\frac{\rho^\delta}{(\E^\delta)^{3/5}}\right)\equiv \phi(\rho^\delta,\E^\delta),
  \end{equation*}
  to acquire
  \begin{equation}\label{f-del-Exp}
    \begin{aligned}
    \f^\delta=& \phi(\overline{\rho},\overline{\E}) + \frac{\partial\phi}{\partial\rho^\delta}(\overline{\rho},\overline{\E})(\rho^\delta-\overline{\rho}) +\frac{\partial\phi}{\partial\E^\delta}(\overline{\rho},\overline{\E})(\E^\delta-\overline{\E})\\
    &\qquad+\frac{1}{2}\left((\rho^\delta-\overline{\rho})\frac{\partial}{\partial\rho^\delta}+ (\E^\delta-\overline{\E})\frac{\partial}{\partial\E^\delta}\right)^2 \phi(\overline{\rho}+r_1(\rho^\delta-\overline{\rho}),\overline{\E}+r_1(\E^\delta-\overline{\E})),
    \end{aligned}
  \end{equation}
  where $0\leqslant r_1\leqslant 1$ and
  \begin{equation*}
    \phi(\overline{\rho},\overline{\E})=1,\quad \frac{\partial\phi}{\partial\rho^\delta}(\overline{\rho},\overline{\E}) =\frac{\phi'(\overline{\rho},\overline{\E})}{\overline{\E}^{3/5}}, \quad \frac{\partial\phi}{\partial\E^\delta}(\overline{\rho},\overline{\E})=\phi'(\overline{\rho},\overline{\E}) \left(-\frac{3\overline{\rho}}{5\overline{\E}^{8/5}}\right).
  \end{equation*}
  Since $(\sigma,u,\theta)$ is smooth with the energy estimate \eqref{Ac-sys-Es} and $(\rho_d^\delta,\u_d^\delta,\E_d^\delta)$ is smooth with the estimate \eqref{diff-2-order-Es}, the last term on the right hand side of \eqref{f-del-Exp} is $O(\delta^2)$ under the assumption \eqref{Hypo}. We denote it by $\f_d^\delta\delta^2$ with
  \begin{equation}\label{diff-2-order-f-Es}
    \|\f_d^\delta\|_{H^s} \leqslant C.
  \end{equation}
  Thus, utilizing the relation $\phi'(\overline{\rho},\overline{\E})=\frac{1}{\varphi'(1)}$, we obtain
  \begin{equation}\label{f-del-Exp-1}
    \f^\delta\equiv 1+ (\sigma - \overline{K}_g \theta) \delta +\f_d^\delta\delta^2.
  \end{equation}
  Similarly, the expansion of $T^\delta=\psi(\rho^\delta,\f^\delta)$ is
  \begin{equation}\label{T-del-Exp}
    \begin{aligned}
    T^\delta=&\psi(\overline{\rho},1) + \frac{\partial\psi}{\partial\rho^\delta}(\overline{\rho},1)(\rho^\delta-\overline{\rho}) +\frac{\partial\psi}{\partial\f^\delta}(\overline{\rho},1)(\f^\delta-1)\\
    &\qquad+\frac{1}{2}\left((\rho^\delta-\overline{\rho})\frac{\partial}{\partial\rho^\delta}+ (\f^\delta-1)\frac{\partial}{\partial\f^\delta}\right)^2 \psi(\overline{\rho}+r_2(\rho^\delta-\overline{\rho}),1+r_2(\f^\delta-1))\\
    \equiv&1 + \theta \delta +T_d^\delta\delta^2,
    \end{aligned}
  \end{equation}
  with $0\leqslant r_2\leqslant 1$ and
  \begin{equation}\label{diff-2-order-T-Es}
    \|T_d^\delta\|_{H^s} \leqslant C.
  \end{equation}

  From \eqref{diff-2-order-f-Es} and \eqref{diff-2-order-T-Es}, we know the difference variables $(\f_d^\delta,u_d^\delta,T_d^\delta)$ are smooth. Consequently, we can define the smooth parameters $\f^{\delta,z}$, $\u^{\delta,z}$ and $T^{\delta,z}$ by
  \begin{equation*}
    \f^{\delta,z}=  1+ (\sigma - \overline{K}_g \theta) z + \f_d^\delta z^2, \;\; \u^{\delta,z}=  u z + u_d^\delta z^2, \;\; T^{\delta,z}=  1 + \theta z + T_d^\delta z^2,
  \end{equation*}
  and the auxiliary local Fermi-Dirac distribution by
  \begin{equation*}
    \mu(z) \equiv \mu^{\delta,z} = \frac{1}{1+e^{\frac{|v-\u^{\delta,z}(t,x)|^2}{2T^{\delta,z}(t,x)}-\f^{\delta,z}(t,x)}}.
  \end{equation*}
  $(\f^{\delta,\delta},\u^{\delta,\delta},T^{\delta,\delta})=(\f^\delta,\u^\delta,T^\delta)$ implies that $\mu^{\delta,\delta}=\mu^\delta$. We expand $\mu(z)$ as a function of $z$. By Taylor’s formula, $\mu(z)$ can be written as
  \begin{equation}\label{mu-z-Exp}
    \mu(z)=\mu(0) + \mu'(0)z + \frac{\mu^{\prime\prime}(z_*)}{2} z^2,
  \end{equation}
  for some $0 \leqslant z_* \leqslant z$, which may depend on $(t,x,v)$ and $\delta$. By computing directly, we acquire that $\mu(0)=\mu_0$ and
  \begin{equation*}
    \begin{aligned}
    \frac{\d}{\d z} \mu(z) =& \left\{ (\f^{\delta,z})' + (v-\u^{\delta,z})\cdot \frac{(\u^{\delta,z})'}{T^{\delta,z}} + \frac{|v-\u^{\delta,z}|^2}{T^{\delta,z}} \frac{(T^{\delta,z})'}{2T^{\delta,z}} \right\} \mu(z)(\,1-\mu(z)\,)\\
    \equiv & \; D^{\delta,z}\mu(z)(\,1-\mu(z)\,),
    \end{aligned}
  \end{equation*}
  where the symbol $'$ means $\frac{\d}{\d z}$ and
  \begin{equation*}
     (\f^{\delta,z})' = (\sigma - \overline{K}_g \theta) + 2\f_d^\delta  z, \quad (\u^{\delta,z})' = u + 2 u_d^\delta z,\quad (T^{\delta,z})' =\theta + 2T_d^\delta  z.
  \end{equation*}
  Thus $\big[\, (\f^{\delta,z})', \, (\u^{\delta,z})', \, (T^{\delta,z})' \,\big](0)=\big[\, (\sigma - \overline{K}_g \theta), \, u, \, \theta \,\big]$ and hence we obtain
  \begin{equation*}
    \mu'(0) = \left\{\, \sigma + v\cdot u + (\frac{|v|^2}{2} - \overline{K}_g )\theta \,\right\} \mu_0(1-\mu_0)= G(t,x,v).
  \end{equation*}
  We further take the second derivative of $\mu(z)$ to get
  \begin{equation*}
    \mu^{\prime\prime}(z)= (D^{\delta,z})' \mu(z) (\,1-\mu(z)\,)+ (D^{\delta,z})^2\mu(z)(\,1-\mu(z)\,)(\,1-2\mu(z)\,),
  \end{equation*}
  where
  \begin{equation*}
    \begin{aligned}
    (D^{\delta,z})' = (\f^{\delta,z})^{\prime\prime} - \frac{|(\u^{\delta,z})'|^2}{T^{\delta,z}} +& (v-\u^{\delta,z})\cdot\left(  \frac{ (\u^{\delta,z})^{\prime\prime} }{T^{\delta,z}} -2\frac{(\u^{\delta,z})'(T^{\delta,z})'}{(T^{\delta,z})^2}  \right) \\
     &\qquad+ |v-\u^{\delta,z}|^2\left( \frac{(T^{\delta,z})^{\prime\prime}}{2(T^{\delta,z})^2}  -  \frac{((T^{\delta,z})')^2}{(T^{\delta,z})^3} \right).
    \end{aligned}
  \end{equation*}
  The terms $(\f^{\delta,z})^{\prime\prime}$, $(\u^{\delta,z})^{\prime\prime}$ and $(T^{\delta,z})^{\prime\prime}$ are given by
  \begin{equation*}
    (\f^{\delta,z})^{\prime\prime} = 2\f_d^\delta ,\quad (\u^{\delta,z})^{\prime\prime} = 2 u_d^\delta,\quad (T^{\delta,z})^{\prime\prime} = 2T_d^\delta.
  \end{equation*}
  Thus by taking $z=\delta$ we obtain
  \begin{equation*}
    \mu^{\delta,\delta}= \mu_0 + G \delta+ \frac{\mu^{\prime\prime}(\delta_*)}{2} \delta^2, \quad \text{ for some } 0\leqslant \delta_* \leqslant \delta.
  \end{equation*}

  To certify the estimate \eqref{mu-del-2-order-Es}, it is sufficient to show that $\|\mu^{\prime\prime}(\delta_*)\|_2+\|\mu^{\prime\prime}(\delta_*)\|_\infty$ is bounded uniformly in $\delta_*$. To this end, from the uniform estimates \eqref{diff-2-order-Es}, \eqref{diff-2-order-f-Es} and \eqref{diff-2-order-T-Es}, we deduce the uniform pointwise estimates of
  \begin{equation*}
    \f^{\delta,z},\; \u^{\delta,z},\; T^{\delta,z},\; (\f^{\delta,z})',\; (\u^{\delta,z})',\; (T^{\delta,z})',\; (\f^{\delta,z})^{\prime\prime},\; (\u^{\delta,z})^{\prime\prime},\; (T^{\delta,z})^{\prime\prime},
  \end{equation*}
  for every $0 \leqslant z=\delta_* \leqslant \delta_0$ and any $t\leqslant \tau$.

  The estimate \eqref{mu-del-2-order-Es} is established as announced.
\end{proof}

\begin{proof}[{\bf Proof of Theorem \ref{Thm-AC-Limit}}]
  Recall the expansion \eqref{Global-Exp} and the definition \eqref{G} of $G$, we can obtain
  \begin{equation*}
    \begin{aligned}
    &\sup_{0\leqslant t\leqslant \tau} \|G_\eps-G\|_\infty + \sup_{0\leqslant t\leqslant \tau} \|G_\eps-G\|_2\\
    &\qquad=\sup_{0\leqslant t\leqslant \tau} \|\frac{F_\eps-\mu_0}{\delta}-G\|_\infty +\sup_{0\leqslant t\leqslant \tau} \|\frac{F_\eps-\mu_0}{\delta}-G\|_2\\
    &\qquad\qquad =\sup_{0\leqslant t\leqslant \tau} \|\frac{F_\eps-\mu^\delta}{\delta}+\frac{\mu^\delta-\mu_0-\delta G}{\delta}\|_\infty +\sup_{0\leqslant t\leqslant \tau} \|\frac{F_\eps-\mu^\delta}{\delta}+\frac{\mu^\delta-\mu_0-\delta G}{\delta}\|_2.
    \end{aligned}
  \end{equation*}
  Then from \eqref{uni-bd} and \eqref{mu-del-2-order-Es}, we deduce that
  \begin{equation*}
    \sup_{0\leqslant t\leqslant \tau} \|G_\eps-G\|_\infty + \sup_{0\leqslant t\leqslant \tau} \|G_\eps-G\|_2 \leqslant C\,(\frac{\eps}{\delta}+\delta).
  \end{equation*}
\end{proof}

\section{Appendix}
  \begin{lemma}
  For any $0<q<1$, there exists $C>0$ such that
  \begin{equation*}
    \left\langle\mu^{-q/2}\L f, \mu^{-q/2} f \right\rangle_{L_v^2} \geqslant \frac{1}{2} |\mu^{-q/2} f|_{\nu(\mu)}^2- C|f|_{\nu(\mu)}^2.
  \end{equation*}
\end{lemma}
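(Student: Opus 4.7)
The plan is to decompose $\L=\nu(\mu)I-\K$ and reduce the claim to a weighted Schur-type estimate on the off-diagonal operator $\K$. The identity
$$\langle\mu^{-q/2}\L f,\mu^{-q/2}f\rangle_{L_v^2}=|\mu^{-q/2}f|_{\nu(\mu)}^2-\langle\mu^{-q/2}\K f,\mu^{-q/2}f\rangle_{L_v^2}$$
shows that it suffices to prove
$$\bigl|\langle\mu^{-q/2}\K f,\mu^{-q/2}f\rangle_{L_v^2}\bigr|\leq\tfrac{1}{2}|\mu^{-q/2}f|_{\nu(\mu)}^2+C|f|_{\nu(\mu)}^2.$$
Introducing $g:=\mu^{-q/2}f$ and the conjugated operator $\widetilde{\K}_q g:=\mu^{-q/2}\K(\mu^{q/2}g)$, this becomes the assertion that the bilinear form $\langle\widetilde{\K}_q g,g\rangle_{L_v^2}$ is controlled by a small fraction of $|g|_{\nu(\mu)}^2$ plus a lower-order term that is expressible as $|f|_{\nu(\mu)}^2$.

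Next I would extract the integral kernel of $\widetilde{\K}_q$ from the proof of Proposition \ref{Prop-L-proper}(3). Writing $\K f(v)=\int_{\R^3}k(v,v_*)f(v_*)\,\d v_*$, the computations leading to \eqref{K-1-Es}--\eqref{K-2-Es} give $|k(v,v_*)|\leq C|v-v_*|^{-1}\exp(-c|v-v_*|^2)$ uniformly in $(t,x)\in[0,\tau]\times\R^3$ thanks to the bounds \eqref{mu-Es} and Proposition \ref{Prop-fT-Es}. The conjugated kernel $\widetilde{k}_q(v,v_*):=\mu^{-q/2}(v)k(v,v_*)\mu^{q/2}(v_*)$ carries the extra factor $\mu^{-q/2}(v)\mu^{q/2}(v_*)\sim\exp\bigl(\tfrac{q}{4T}(|v-\u|^2-|v_*-\u|^2)\bigr)$, so I would repeat the completing-the-square argument (setting $V=v_*-v$, $V_\p=(V\cdot\omega)\omega$ as in the proof of \eqref{K-2-Es}) and check that for every $q\in(0,1)$ there exists $c'(q)>0$ such that
$$|\widetilde{k}_q(v,v_*)|\leq\frac{C_q}{|v-v_*|}\exp(-c'(q)|v-v_*|^2).$$
The residual rate $c'(q)$ is strictly positive precisely because $q<1$ and degenerates only as $q\uparrow 1$. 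Consequently $\widetilde{k}_q$ enjoys the Schur bounds $\sup_v\int|\widetilde{k}_q(v,v_*)|\,\d v_*<\infty$ and $\sup_v\,\nu(\mu)(v)^{-1}\int|\widetilde{k}_q(v,v_*)|\,\nu(\mu)(v_*)\,\d v_*<\infty$.

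I would then conclude by a Grad-type splitting: for a large parameter $N>0$, decompose $\langle\widetilde{\K}_q g,g\rangle_{L_v^2}=\mathrm{I}_N+\mathrm{II}_N$ according to whether $|v|\leq N$ or $|v|>N$. On $\mathrm{I}_N$, both $\mu^{-q/2}(v)$ and $1/\nu(\mu)(v)$ are bounded by constants depending on $N$, so Cauchy--Schwarz together with the first Schur bound yields $|\mathrm{I}_N|\leq C_N\int_{|v|\leq N}|f|^2\,\d v\leq C_N'|f|_{\nu(\mu)}^2$ (using $\nu(\mu)\geq c>0$ from \eqref{nu-Es}). On $\mathrm{II}_N$, the Gaussian decay of $\widetilde{k}_q$ combined with the linear growth $\nu(\mu)(v)\sim(1+|v|)$ produces $|\mathrm{II}_N|\leq\eta(N)|g|_{\nu(\mu)}^2$ with $\eta(N)\to 0$ as $N\to\infty$; picking $N$ so that $\eta(N)\leq 1/2$ closes the estimate. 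The main obstacle lies in the second paragraph: one must carry out the completing-the-square computation for $\widetilde{k}_q$ carefully enough to track how $c'(q)$ depends on $q$, and one must check that this rate stays bounded away from zero uniformly in $(t,x)\in[0,\tau]\times\R^3$, using only the pointwise bounds $\f_m\leq\f\leq\f_M$ and $T_m<T<2T_m$ from Proposition \ref{Prop-fT-Es}. Once this uniform Gaussian decay is secured, the Grad splitting and Schur estimates are routine.
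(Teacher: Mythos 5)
Your proposal is essentially correct and, in broad strokes, rests on the same technical core as the paper's proof: the $q$-dependent Gaussian decay of the conjugated kernel, which is where the bound $q<1$ enters, and which for the Carleman part of $\K_2$ is obtained exactly by the change-of-variables $v_*\mapsto(V_\perp,V_\p)$ and a completing-the-square identity such as
\begin{equation*}
  \frac{q|v-\u|^2}{4T}-\frac{q|v+V_\p-\u|^2}{4T}-\frac{|V_\p|^2}{8T}-\frac{|\zeta_\p|^2}{2T}
  \leqslant -(1-q)\Bigl\{\tfrac{|V_\p|^2}{8T}+\tfrac{|\zeta_\p|^2}{2T}\Bigr\}.
\end{equation*}
This is precisely what the paper works out in the Appendix, with the rate $c'(q)\sim(1-q)/(16T_m)$ bounded away from zero uniformly in $(t,x)$ by Proposition~\ref{Prop-fT-Es}.

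Where you diverge from the paper is at the level of the top-level decomposition. The paper splits the collision operator by a smooth cutoff in the relative velocity, $\K=\K^{1-\chi}+\K^\chi$: the ``near'' contributions $\K_1^{1-\chi},\K_2^{1-\chi}$ (with $|v_*-v|\leqslant 2r$) are absorbed directly into $C|f|_{\nu(\mu)}^2$ by exploiting the comparability of $\mu(v)$, $\mu(v_*)$, $\mu(v')$ on small collision shells; the ``far'' contribution $\K_1^\chi$ gains an explicit factor $e^{-c(1-q)r^2}$; and only $\K_2^\chi$ uses the Carleman representation followed by a second split in $|v|\lessgtr r_1$. You instead propose to conjugate the whole operator at once, establish a uniform bound $|\widetilde k_q(v,v_*)|\leqslant C_q|v-v_*|^{-1}e^{-c'(q)|v-v_*|^2}$, and then perform a single Grad-type split by $|v|\lessgtr N$. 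Both routes work; yours is organizationally cleaner (one split instead of two), though the near-part absorption that you replace with a Young-inequality trade on $\mathrm{I}_N$ is precisely what the paper's near-far split handles in a single step. One minor caution: citing the bound $|k(v,v_*)|\lesssim|v-v_*|^{-1}e^{-c|v-v_*|^2}$ as the input to conjugation is not quite enough, since $\mu^{-q/2}(v)\mu^{q/2}(v_*)$ can grow like $e^{q|v-v_*|^2/(4T)}$ along directions where $|v_*|$ is fixed and $|v|$ large, so you must re-derive the Gaussian rate from the explicit Carleman integral rather than from the unconjugated kernel bound alone --- as you rightly flag when you say one must ``repeat the completing-the-square argument'' and track $c'(q)$. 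That step is where the real work is, and it coincides with the paper's Appendix computation.
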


\begin{proof}
  Throughout the proof, we always use the estimates \eqref{mu-Es}. For some $r>0$, we define the smooth cutoff function $\chi(s)$ by
  \begin{equation*}
    \chi(s)=\left\{
    \begin{aligned}
    &\quad 1,\qquad s\geqslant 2r,\\
    &\in [0,1],\quad r \leqslant s \leqslant 2r,\\
    &\quad 0,\qquad 0 \leqslant s \leqslant r.
    \end{aligned}
    \right.
  \end{equation*}
  Then we split $\K$ as $\K=\K^{1-\chi}+\K^\chi$ with
  \begin{equation*}
  \begin{aligned}
    &\K^{1-\chi}f = \iint_{\R^3\times\S^2}|(v_*-v)\cdot\omega|\frac{\N}{\sqrt{\mu(1-\mu)}}(1-\chi(|v_*-v|)) \left(\frac{f}{\sqrt{\mu(1-\mu)}}\right)_*\d \omega\d v_*\\
    &\quad-\iint_{\R^3\times\S^2}|(v_*-v)\cdot\omega|\frac{\N}{\sqrt{\mu(1-\mu)}}(1-\chi(|v_*-v|)) \left\{\left(\frac{f}{\sqrt{\mu(1-\mu)}}\right)_*^\prime+\left(\frac{f}{\sqrt{\mu(1-\mu)}}\right)^\prime\right\}\d \omega\d v_*\\
    &\qquad \,=\K_1^{1-\chi}-\K_2^{1-\chi}.
    \end{aligned}
  \end{equation*}
  Thus
  \begin{equation*}
    \left\langle\mu^{-q/2}\K^{1-\chi} f, \mu^{-q/2}f \right\rangle_{L_v^2} = \left\langle\K_1^{1-\chi} f, \mu^{-q}f \right\rangle_{L_v^2} - \left\langle\K_2^{1-\chi} f, \mu^{-q}f \right\rangle_{L_v^2}.
  \end{equation*}
  Recall the definition of $\N$ \eqref{N}, since $1-\mu$ is bounded, we employ Cauchy-Schwarz's inequality to obtain for any $0< q<1$
  \begin{equation*}
    \begin{aligned}
    \left|\left\langle\K_1^{1-\chi} f, \mu^{-q}f \right\rangle_{L_v^2}\right| \leqslant & C\iiint_{|v_*-v|\leqslant 2r}|(v_*-v)\cdot\omega||ff_*|\sqrt{\mu\mu_*}\mu^{-q} \d \omega\d v_*\d v\\
    \leqslant &  C\left\{\iint_{\R^3\times\S^2}|f|^2\mu^{\frac{1-q}{2}}\left(\int_{\R^3}|(v_*-v)\cdot\omega|\mu_*^{\frac{1-q}{4}}\d v_*\right)\d \omega\d v\right\}^{1/2}\times\\
    &\qquad\left\{\iint_{\R^3\times\S^2}|(v_*-v)\cdot\omega||f_*|^2\mu_*^{\frac{1-q}{4}}\left(\int_{\R^3}|(v_*-v)\cdot\omega|\mu^{\frac{1-q}{2}}\d v_*\right)\d \omega\d v\right\}^{1/2}\\
    \leqslant  & C |f|_{\nu(\mu)}^2,
    \end{aligned}
  \end{equation*}
  where we have used $\mu^{-q}(v) \leqslant C\mu^{-\tfrac{q}{2}}(v)\mu^{-(\tfrac{1}{2}-\tfrac{1-q}{4})}(v_*)$, because $|v-\u|^2 \leqslant \frac{1+q}{2q}|v_*-\u|^2+Cr^2$ provided $|v_*-v|\leqslant 2r$.
  On the other hand, for the part $\K_2^{1-\chi}$, $|v_*^\prime-v'|=|v_*-v|\leqslant 2r$ implies that
  \begin{equation*}
    |v_*^\prime-\u|\geqslant |v'-\u|-2r,\quad |v'-\u|\geqslant |v-\u|-2r.
  \end{equation*}
  It immediately follows that for any $0 <q'<1$
  \begin{equation*}
    \sqrt{\mu_*\mu_*^\prime}+\sqrt{\mu_*\mu^\prime}\leqslant C\mu_*^{(1+q')/4}\mu^{(1+q')/4},
  \end{equation*}
  and for any $q<q'<1$
  \begin{equation*}
    \mu^{-q}(v)\leqslant C \mu^{-q'/2}(v_*^\prime)\mu^{-q'/2}(v^\prime).
  \end{equation*}
  As a result, since $\d \omega\d v_*^\prime\d v'=\d \omega\d v_*\d v$, we get
  \begin{equation*}
    \begin{aligned}
    \left|\left\langle\K_2^{1-\chi} f, \mu^{-q}f \right\rangle_{L_v^2}\right| \leqslant & C\iiint_{|v_*^\prime-v'|\leqslant 2r}|v_*^\prime-v'|\left(\,|f'|\sqrt{\mu_*^\prime}+|f_*^\prime|\sqrt{\mu'}\,\right)\sqrt{\mu_*}\mu^{-q}|f| \d \omega\d v_*^\prime\d v'\\
    \leqslant &  C\left\{\iint_{\R^3\times\S^2}|f'|^2(\mu')^{\frac{1-q'}{4}}\left(\int_{\R^3}|v_*^\prime-v'|(\mu_*^\prime)^{\frac{1-q'}{4}}\d v_*^\prime\right)\d \omega\d v'\right\}^{1/2}\times\\
    &\qquad\left\{\iint_{\R^3\times\S^2}|f|^2\mu_*^{\frac{1-q'}{4}} \left(\int_{\R^3}|v_*-v|\mu^{\frac{1-q'}{4}}\d v\right)\d \omega\d v_*\right\}^{1/2}\\
    \leqslant  & C |f|_{\nu(\mu)}^2,
    \end{aligned}
  \end{equation*}

  Now we treat the part
  \begin{equation*}
    \left\langle\mu^{-q/2}\K^\chi f, \mu^{-q/2}f \right\rangle_{L_v^2} = \left\langle\K_1^\chi f, \mu^{-q}f \right\rangle_{L_v^2} - \left\langle\K_2^\chi f, \mu^{-q}f \right\rangle_{L_v^2}.
  \end{equation*}
  For the first term on the right hand side above, since for $|v_*-v|\geqslant r$,
  \begin{equation*}
    |v_*-v|(\mu_*\mu)^{\frac{1-q}{2}}\leqslant C(\mu_*\mu)^{\frac{1-q}{4}}\exp\left\{ -\frac{1-q}{32T}r^2 \right\},
  \end{equation*}
  we obtain by using Cauchy-Schwarz's inequality that
  \begin{equation*}
    \begin{aligned}
    &\left|\left\langle\K_1^\chi f, \mu^{-q}f \right\rangle_{L_v^2}\right| \leqslant C \iiint_{|v_*-v|\geqslant r}|v_*-v||f_*||f| \mu_*^{\frac{1}{2}} \mu^{\frac{1-q}{2}}\mu^{-\frac{q}{2}}\d\omega\d v_*\d v\\
    &\quad\leqslant C\left(\iiint_{|v_*-v|\geqslant r}|v_*-v||\mu^{-q/2}f|^2(\mu_*\mu)^{\frac{1-q}{2}}\d\omega\d v_*\d v\right)^{1/2} \left(\iiint_{|v_*-v|\geqslant r}|v_*-v||f_*|^2\mu_*^{\frac{1}{2}}\mu^{\frac{1-q}{2}}\d\omega\d v_*\d v\right)^{1/2}\\
    &\quad\leqslant C \exp\left\{ -\frac{1-q}{32T}r^2 \right\}|\mu^{-q/2}f|_{L_v^2}^2.
    \end{aligned}
  \end{equation*}
  Next, we turn to the part
  \begin{equation*}
    \K_2^\chi f =2\iint_{\R^3\times\S^2}\chi(|v_*-v|)|(v_*-v)\cdot\omega|\frac{\N}{\sqrt{\mu(1-\mu)}} \left(\frac{f}{\sqrt{\mu(1-\mu)}}\right)^\prime\d \omega\d v_*.
  \end{equation*}
  By using the variable changing $v_*-v=V$ and the classical transformation (see page 43 in \cite{Glassey-1996Book})
  \begin{equation}\label{Vel-Dec}
    \d \omega \d V=\frac{2\d V_\bot \d V_{\p}}{|V_{\p}|^2},\quad V_{\p}=(V\cdot\omega)\omega\in \R^3,\,V_\bot=V-V_{\p}\in \R^2,
  \end{equation}
  we arrive at
  \begin{equation*}
        |\K_2^\chi f| \leqslant C \iint_{|V|\geqslant r}|V_{\p}||f(v+V_{\p})| \mu^{-1/2}(v+V_{\p})\mu^{1/2}(v)\mu(v+V)\frac{\d V_\bot \d V_{\p}}{|V_{\p}|^2}.
  \end{equation*}
  Let
  \begin{equation*}
    \eta=v+V_{\p},\quad \zeta=\frac{1}{2}(v+\eta),
  \end{equation*}
  then
  \begin{equation*}
    V_\bot\cdot\zeta=V_\bot\cdot v=V_\bot \cdot\eta.
  \end{equation*}
  Thus
  \begin{equation*}
    \begin{aligned}
    &-\frac{|v-\u|^2}{2}+\frac{|\eta-\u|^2}{2}-|\eta+V_\bot-\u|^2 =-\frac{|v-\u|^2}{2}-\frac{|\eta-\u|^2}{2}-2V_\bot\cdot(\zeta-\u)-|V_\bot|^2\\
    &\qquad=-\frac{|v-\u|^2}{2}-2V_\bot\cdot(\zeta-\u)-2|\zeta-\u|^2+2(\zeta-\u)\cdot(v-\u)-\frac{|v-\u|^2}{2}-|V_\bot|^2\\
    &\qquad=(-|V_\bot|^2-2V_\bot\cdot(\zeta-\u)-|\zeta-\u|^2)+(-|\zeta-\u|^2+2(\zeta-\u)\cdot(v-\u)-|v-\u|^2)\\
    &\qquad=-|V_\bot+\zeta-\u|^2-\frac{1}{4}|V_{\p}|^2.
    \end{aligned}
  \end{equation*}
  Note the estimates \eqref{mu-Es}, we acquire
  \begin{equation*}
    \begin{aligned}
        |\K_2^\chi f| \leqslant &C \iint_{\R^3\times\R^2}\frac{\chi(|V|)}{|V_{\p}|}|f(v+V_{\p})| \exp\left\{-\frac{|V_\bot+\zeta-\u|^2}{2T}-\frac{1}{8T}|V_{\p}|^2\right\}\d V_\bot \d V_{\p}\\
        \leqslant &C \int_{\R^3}\frac{1}{|V_{\p}|}|f(v+V_{\p})| \exp\left\{-\frac{|V_{\p}|^2}{8T}-\frac{|\zeta_{\p}|^2}{2T}\right\} \d V_{\p},
    \end{aligned}
  \end{equation*}
  where we have used that $\int_{\R^2}\exp\left\{-\frac{|V_\bot+\zeta_\bot|^2}{2T}\right\}\d V_\bot$ is bounded for $\zeta_{\p}=[(\zeta-\u)\cdot\omega]\omega$ and $\zeta_\bot=\zeta-\zeta_{\p}$.
  As a result, we evaluate that
  \begin{equation*}
    \begin{aligned}
    \left|\left\langle\K_2^\chi f, \mu^{-q}f \right\rangle_{L_v^2}\right|\leqslant & C\iint_{\R^3\times\R^3}\frac{1}{|V_{\p}|} \left\{\mu^{-q/2}(v+V_{\p})|f(v+V_{\p})|\right\} \left\{\mu^{-q/2}(v)|f(v)|\right\}\times\\
    &\qquad \exp\left\{\frac{q|v-\u|^2}{4T}-\frac{q|v+V_{\p}-\u|^2}{4T}-\frac{|V_{\p}|^2}{8T}-\frac{|\zeta_{\p}|^2}{2T}\right\} \d V_{\p}\d v.
    \end{aligned}
  \end{equation*}
  Because
  \begin{equation*}
    \begin{aligned}
    &\frac{q|v-\u|^2}{4T}-\frac{q|v+V_{\p}-\u|^2}{4T}-\frac{|V_{\p}|^2}{8T}-\frac{|\zeta_{\p}|^2}{2T} \\ =&-(1-q)\left\{\frac{|V_{\p}|^2}{8T}+\frac{|\zeta_{\p}|^2}{2T}\right\}+\frac{q}{2T} \left(\frac{|v-\u|^2}{2}-\frac{|v+V_{\p}-\u|^2}{2}-\frac{|V_{\p}|^2}{4}-|\zeta_{\p}|^2\right)\\
    =&-(1-q)\left\{\frac{|V_{\p}|^2}{8T}+\frac{|\zeta_{\p}|^2}{2T}\right\}+\frac{q}{2T} \left(-\frac{|V_{\p}|^2}{2}-(\zeta-\u-\tfrac{1}{2}V_{\p})\cdot V_{\p}-\frac{|V_{\p}|^2}{4}-|\zeta_{\p}|^2\right)\\
    =&-(1-q)\left\{\frac{|V_{\p}|^2}{8T}+\frac{|\zeta_{\p}|^2}{2T}\right\}-\frac{q}{2T} \left(\frac{|V_{\p}|^2}{4}+|\zeta_{\p}|^2+V_{\p}\cdot\zeta_{\p}\right)-\frac{q}{2T} \frac{|V_{\p}|^2}{2}\\
    \leqslant &-(1-q)\left\{\frac{|V_{\p}|^2}{8T}+\frac{|\zeta_{\p}|^2}{2T}\right\},
    \end{aligned}
  \end{equation*}
  it follows that
  \begin{equation*}
    \begin{aligned}
    &\left|\left\langle\K_2^\chi f, \mu^{-q}f \right\rangle_{L_v^2}\right|\\
    &\quad\leqslant  C\int_{\R^3}\frac{1}{|V_{\p}|}\exp\left\{-(1-q)\left\{\frac{|V_{\p}|^2}{8T}+\frac{|\zeta_{\p}|^2}{2T}\right\}\right\}\d V_{\p}\\
    &\qquad\times\int_{\R^3} \left\{\mu^{-q/2}(v+V_{\p})|f(v+V_{\p})|\frac{\nu^{1/2}(v+V_{\p})}{(1+|v+V_{\p}|)^{1/2}}\right\} \left\{\mu^{-q/2}(v)|f(v)|\frac{\nu^{1/2}(v)}{(1+|v|)^{1/2}}\right\} \d v\\
    &\quad\leqslant C|(1+|v|)^{-1/2}\mu^{-q/2}f|_{\nu(\mu)}^2\\
    &\quad\leqslant C\int_{\R^3}(\mathbbm{l}_{|v|\leqslant r_1}+\mathbbm{l}_{|v|\geqslant r_1})\frac{1}{1+|v|}\mu^{-q}|f|^2\nu(\mu)\d v\\
    &\quad \leqslant C|f|_{\nu(\mu)}^2+\frac{C}{1+r_1}|\mu^{-q/2}f|_{\nu(\mu)}^2.
    \end{aligned}
  \end{equation*}
  In summary, by taking $r$ and $r_1$ large enough, we deduce that
  \begin{equation}
    \left\langle\mu^{-q/2}\L f, \mu^{-q/2} f \right\rangle_{L_v^2} \geqslant \frac{1}{2} |\mu^{-q/2} f|_{\nu(\mu)}^2 -C|f|_{\nu(\mu)}^2.
  \end{equation}
\end{proof}

\end{document}